\documentclass[11pt]{amsart}
\usepackage{hyperref}
\usepackage{fullpage}
\usepackage{amsmath,amsthm,amssymb}
\usepackage{dsfont}
\usepackage{diagbox}
\usepackage{enumitem}
\usepackage{caption, subcaption}
\usepackage{tikz,color}
\usetikzlibrary{patterns}

 \newtheorem{theorem}{Theorem}[section]
 \newtheorem{corollary}[theorem]{Corollary}

 \newtheorem{lemma}[theorem]{Lemma}

 \newtheorem{proposition}[theorem]{Proposition}
 
 \theoremstyle{definition}
 \newtheorem{example}[theorem]{Example}
 
 \newtheorem{problem}[theorem]{Problem}
 \newtheorem{definition}[theorem]{Definition}
 
 \newtheorem{remark}[theorem]{Remark}

\tikzset{
  hatch size/.store in=\hatchsize,
}
 
\definecolor{mygreen}{rgb}{0.1,0.8,0.1}
\definecolor{mygray}{rgb}{0.7,0.7,0.7}

\newcommand{\beq}{\begin{equation}}
\newcommand{\eeq}{\end{equation}}

\newcommand{\beqn}{\begin{eqnarray}}
\newcommand{\eeqn}{\end{eqnarray}}

\newcommand{\tdot}[3]{\draw [fill=black, color=#3] (#1,#2) circle [radius=0.25];}

\newcommand{\OO}{\mathcal{O}}

\newcommand{\orighta}{\xrightarrow{\OO}}
\newcommand{\olefta}{\xleftarrow{\OO}}

\newcommand{\concat}{\! \cdot \!}

\newcommand{\T}{\mathcal{T}}

\newcommand{\N}{\mathbb{N}}


\begin{document}

\title{Complete non-ambiguous trees and associated permutations: new enumerative results}

\author{Thomas Selig and Haoyue Zhu}
\address{Department of Computing, School of Advanced Technology, Xi'an Jiaotong-Livepool University, 111, Ren'ai Road, Suzhou 215123, China} 
\email[T.~Selig (corresponding author)]{Thomas.Selig@xjtlu.edu.cn}
\email[H.~ Zhu]{Haoyue.Zhu18@student.xjtlu.edu.cn}

\date{\today}

\begin{abstract}
We study a link between complete non-ambiguous trees (CNATs) and permutations exhibited by Daniel Chen and Sebastian Ohlig in recent work. In this, they associate a certain permutation to the leaves of a CNAT, and show that the number of $n$-permutations that are associated with exactly one CNAT is $2^{n-2}$. We connect this to work by the first author and co-authors linking complete non-ambiguous trees and the acyclic orientation number of the associated permutation graph. This allows us to prove a number of conjectures by Chen and Ohlig on the number of $n$-permutations that are associated with exactly $k$ CNATs for various $k > 1$, via various bijective correspondences between such permutations. We also exhibit a new bijection between $(n-1)$-permutations and CNATs whose permutation is the decreasing permutation $n(n-1)\cdots1$. This bijection maps the left-to-right minima of the permutation to dots on the top row of the corresponding CNAT, and descents of the permutation to empty rows of the CNAT.
\\[2mm]
 {\bf Keywords:} complete non-ambiguous trees, permutations, permutation graphs, acyclic orientations.\\[2mm]
 {\bf 2020 Mathematics Subject Classification:} 05A19 (Primary); 05A05, 05A15, 05C30 (Secondary).
 \end{abstract}
 
\maketitle




\section{Introduction}\label{sec:intro}

Non-ambiguous trees (NATs) were originally introduced by Aval \textit{et al.}\ in~\cite{ABBS} as a special case of the tree-like tableaux from~\cite{ABN}. The combinatorial study of these objects was further developed in~\cite{ABDHL}, which included a generalisation of NATs to higher dimensions. In~\cite{DSSS2}, the authors described a multi-rooted generalisation of complete NATs (CNATs), and linked this to the so-called Abelian sandpile model.

We can associate a permutation to a CNAT by keeping only its leaf dots (see Section~\ref{subsec:cnats_prelims}). This link between CNATs and permutations was first noted, to the best of our knowledge, in~\cite{DSSS2} (see Section~4 in that paper for more details). Chen and Ohlig~\cite{CO} initiated the first in-depth combinatorial study of this relationship. In particular, they characterised the set of permutations $\pi$ which are associated to a unique CNAT. More generally, they looked into the number of $n$-permutations with $k$ associated CNATs, for $k \geq 1$, and provided a number of conjectures on the enumeration of such permutations. They also studied in detail so-called \emph{upper-diagonal} CNATs, which are CNATs where the associated permutation is the decreasing permutation $n(n-1)\cdots1$, using these to prove a conjecture of Laborde-Zubieta on occupied corners of tree-like tableaux~\cite{Zub}. Finally, they introduced a natural statistic, \emph{determinant}, on CNATs, and conjectured that CNATs with determinant $+1$ and those with determinant $-1$ are equinumerous. This conjecture was solved very recently by Aval~\cite{AvalDet} through a bijective approach.

In this paper, we will further develop the study initiated in~\cite{CO}, using results from~\cite{DSSS2}. In that work it was shown that the number of CNATs associated with a given permutation $\pi$ is equal to the number of \emph{minimal recurrent configurations} for the Abelian sandpile model on the \emph{permutation graph} $G_{\pi}$ of $\pi$. These are counted by certain acyclic orientations of the graph (see Theorem~\ref{thm:cnat_tutte} in this paper). This enumerative result allows us to solve a number of conjectures in~\cite{CO}, via various bijective correspondences between permutations with a given number of CNATs.

We also add to the study of upper-diagonal CNATs by providing a new bijection between upper-diagonal CNATs of size $n$ and permutations of length $n-1$ (see Theorem~\ref{thm:bij_cnat_perm}). This new bijection has the following added benefits. Firstly, it is direct, whereas the bijection in~\cite{CO} used intermediate objects called tiered trees (see~\cite{DGGS}). Secondly, it maps certain statistics of the upper-diagonal CNATs such as the number of top-row dots, or the number of empty rows, to well-known statistics of the corresponding permutation.

Our paper is organised as follows. In Section~\ref{sec:prelim}, we recall some necessary definitions of, and notation on, the combinatorial objects that will be considered in this paper. These include graphs, acyclic orientations, permutations, and CNATs with associated permutations. We also show a number of useful preliminary results that will be useful in the remainder of the paper. At the end of the section we provide the key connection between CNATs and the acyclic orientation number of an associated graph (Theorem~\ref{thm:cnat_tutte}). In Section~\ref{sec:upper-diag}, we focus on one specific family of CNATs, called \emph{upper-diagonal CNATs}. We introduce a concept of \emph{labelled CNAT}, and describe a bijection between labelled CNATs and permutations of the label set (Theorem~\ref{thm:bij_cnat_perm}), which preserves certain statistics of both objects. This bijection then specialises to a bijection between upper-diagonal CNATs of size $n$ and permutations of length $n-1$. The bijection relies on two operations on labelled CNATs, called \emph{top-row decomposition} and \emph{top-row deletion}. 

In Section~\ref{sec:cnat_count}, we focus on counting permutations according to their number of associated CNATs. Specifically, for $k, n \geq 1$ we are interested in the set $B(n,k)$ of permutations of length $n$ which are associated with exactly $k$ CNATs. The study of these sets was initiated in~\cite{CO}, and we continue that work here by proving a number of conjectures left by the authors. We begin in Section~\ref{subsec:B(n,1)_B(n+1,2)} by giving characterisations of the sets $B(n,1)$ and $B(n,2)$ in terms of the so-called \emph{quadrant condition} (Definition~\ref{def:quad_cond}, Propositions~\ref{pro:B(n,1)_quadrants} and \ref{pro:B(n,2)_characterisation}). This allows us to describe a simple bijection between the product set $\{2, \dots, n\} \times B(n,1)$ and $B(n+1,2)$ (Theorem~\ref{thm:Insert_B(n,1)toB(n+1,2)}), and deduce the enumerative formula for the latter (Corollary~\ref{cor:b(n,2)}). In Section~\ref{subsec:B(n,2)_B(n+1,3)} we use permutation \emph{patterns} to establish a bijection between the sets $B(n,2)$ and $B(n+1,3)$ (Theorem~\ref{thm:biject_B(n,2)toB(n+1,3)}). We then show in Section~\ref{subsec:B(n,5)} that the set $B(n,5)$ is always empty (Theorem~\ref{thm:b(n,5)=0}). Finally, in Section~\ref{subsec:max_b(n,k)} we consider the maximal value of $k$ such that $B(n,k)$ is non-empty. We show (Theorem~\ref{thm:max_b(n,k)}) that this is achieved for $k = (n-1)!$, and that the unique permutation in $B(n, (n-1)!)$ is precisely the decreasing permutation $n(n-1)\cdots 1$ whose CNATs were studied in detail in Section~\ref{sec:upper-diag}. We conclude our paper in Section~\ref{sec:conc} with a brief summary of our results as well as some open problems and directions for future research.


\section{Preliminaries}\label{sec:prelim}

In this section we introduce the various objects that will be studied and used throughout the paper, alongside the necessary notation. We also state and prove a number of useful preliminary results.

\subsection{Graphs and orientations}\label{subsec:graphs_prelims}

Throughout this paper all graphs considered are finite, undirected, and connected. For a graph $G$, we denote $V(G)$ and $E(G)$ its set of vertices and edges respectively, and write $G = (V(G), E(G))$ (here $E(G)$ is a multiset since $G$ may have multiple edges). An edge $e \in E(G)$ is a \emph{bridge} if removing $e$ disconnects the graph $G$. A \emph{loop} is simply an edge whose two end-points are the same. A graph $G$ is called \emph{simple} if it contains no loops or multiple edges. Two graphs $G = (V,E)$ and $G' = (V', E')$ are said to be \emph{isomorphic} if there exists a bijection $\phi : V \rightarrow V'$ such that for any vertices $v, w \in V$, we have $(v,w) \in E$ if, and only if, $(\phi(v), \phi(w)) \in E'$. Given a subset $V' \subseteq V(G)$, the \emph{induced} subgraph on $V'$ is the graph with vertex set $V'$ and edge set consisting of all edges in $E$ whose end-points are both in $V'$. We denote it $G\left[V' \right]$. More generally, a \emph{subgraph} of $G$ is a graph $G'$ such that $V(G') \subseteq V(G)$ and $E(G') \subseteq E(G)$. It is a \emph{spanning} subgraph if $V(G') = V(G)$.

A \emph{cycle} in the graph $G$ is a sequence of vertices $v_0, v_1, \ldots, v_n = v_0$ (for some $n \geq 3$) such that for every $i > 0$, $(v_{i-1}, v_i)$ is an edge of $G$, and the vertices $v_0, v_1, \ldots, v_{n-1}$ are all distinct. The integer $n$ is the \emph{length} of the cycle, and we will say that $G$ \emph{contains} a cycle of length $n$ (or $n$-cycle for short) if there exists such a cycle in $G$. The $n$-cycle (graph) $C_n$ is the graph consisting of a single cycle of length $n$, with no other vertices or edges. If $G$ contains a cycle $v_0, v_1, \ldots, v_n = v_0$ such that $G\left[ \{v_0, \ldots, v_{n-1}\} \right]$ is the $n$-cycle $C_n$, we say that $G$ \emph{induces} an $n$-cycle. A \emph{tree} is a (connected) graph containing no cycle of any length. A \emph{spanning tree} of a graph $G$ is a spanning subgraph which is a tree. The \emph{$n$-path} (graph) is the graph with vertex set $V(G) = \{ v_1, \ldots, v_n \}$ and edge set $E(G) = \{ (v_i, v_{i+1}) \}_{1 \leq i \leq n-1}$ (equivalently the $n$-cycle with any single edge removed).

An \emph{orientation} $\OO$ of a graph $G$ is the assignment of a direction to each edge of $G$. Given an orientation $\OO$ of $G$, and an edge $(v,w)$, we write $v \orighta w$ to indicate that the edge is directed from $v$ to $w$ in the orientation $\OO$, and $v \olefta w$ when it is directed from $w$ to $v$. We also write $\In[v]$, resp.\ $\Out[v]$ for the number of incoming edges (edges $v \olefta w$), resp.\ outgoing edges (edges $v \orighta w$), at $v$ in the orientation $\OO$. A vertex $v$ is a \emph{sink}, resp.\ \emph{source}, of an orientation $\OO$ if $\In[v] = \dgr[v]$ (all edges are incoming), resp.\ $\Out[v] = \dgr[v]$ (all edges are outgoing).

An orientation is \emph{acyclic} if it contains no directed cycle, i.e.\ there is no sequence $v_0, \ldots, v_{n-1}$ of vertices such that $v_0 \orighta v_1 \orighta \cdots \orighta v_{n-1} \orighta v_0$. It is straightforward to check that an acyclic orientation must have at least one source and at least one sink. For $s \in V(G)$, we see that an acyclic orientation $\OO$ is \emph{$s$-rooted} if $s$ is the unique sink of $\OO$. 

\subsection{Acyclic orientation numbers}\label{subsec:tutte_prelims}

A key result that we will use in this paper links CNATs (see Section~\ref{subsec:cnats_prelims}) to a certain quantity related to acyclic orientations of graphs.

\begin{proposition}\label{pro:tutte_acyc_or}
Let $G$ be a graph, and $s \in V(G)$ a fixed sink vertex of $G$. Then the number of $s$-rooted acyclic orientations of $G$ does not depend on the choice of sink vertex $s$. We denote this number $a_G$, and call it the \emph{acyclic orientation number}.
\end{proposition}

\begin{proof}
Let $T_G$ denote the \emph{Tutte polynomial} of a graph $G$ (see e.g.~\cite[Chapter~2]{TutteBook} for definitions and more information on this important polynomial). Then we have that $T_G(1,0)$ is equal to the number of $s$-rooted acyclic orientations of $G$ for any fixed $s \in V(G)$ (see e.g.~\cite{Ber} for a bijective proof of this fact). Thus $a_G$ does not depend on the choice of sink vertex $s$.
\end{proof}

We first recall the famous deletion-contraction relation for the acyclic orientation number of a graph $G$. This is a simple specialisation of the more general result for the Tutte polynomial (see e.g.~\cite[Definition~2.7]{TutteBook}). Given a graph $G$ and an edge $e = (v, w) \in E(G)$, we denote by $G \setminus e$ the graph $G$ with edge $e$ \emph{deleted}. That is, $G \setminus e := \left( V(G), E(G) \setminus \{e\} \right)$. Similarly, $G \cdot e$ denotes the graph $G$ with edge $e$ \emph{contracted}. This is the graph $G$ in which the two end-points $v$ and $w$ are replaced by a single vertex $vw$, and edges incident to either $v$ or $w$ are made incident to $vw$ instead (with multiplicity preserved).

\begin{proposition}\label{pro:delete_contract}
Given a graph $G$ and an edge $e \in E(G)$, the acyclic orientation number $a_G$ satisfies the following recurrence:
\beq\label{eq:Tutte_poly}
a_G = \left\{
  \begin{array}{ll}
  a_{G \setminus e} + a_{G \cdot e} & \text{if } e \text{ is neither a bridge nor a loop},\\
  a_{G \cdot e} & \text{if } e \text{ is a bridge},\\
  0  & \text{if } e \text{ is a loop},\\
  1 & \text{if } G \text{ consists of a single vertex (and no edges)}.\\
  \end{array}
\right.
\eeq
\end{proposition}

We now state a number of useful structural results on how the acyclic number of a graph behaves with respect to certain graph constructions. We begin by characterising graphs whose acyclic orientation number is $0$ or $1$.

\begin{lemma}\label{lem:tutte_loop}
Let $G$ be a graph. Then we have $a_G = 0$ if, and only if, $G$ contains a loop. In particular, if $G$ is simple, we have $a_G \geq 1$.
\end{lemma}

\begin{proof}
If $G$ contains a loop, then Equation~\eqref{eq:Tutte_poly} implies that $a_G = 0$. Conversely, if $G$ is loop-free, fix some arbitrary total order $v_1, \ldots, v_n$ of the vertex set $V(G)$. Define an orientation $\OO$ of $G$ by $v_i \orighta v_j$ if, and only if, $i < j$. Since $G$ is loop-free, this does indeed assign a direction to each edge of $G$, and by construction $\OO$ is a $v_n$-rooted acyclic orientation. Proposition~\ref{pro:tutte_acyc_or} then implies that $a_G \geq 1$, as desired. 
\end{proof}

\begin{lemma}\label{lem:tutte_tree}
Let $G$ be a graph. Then we have $a_G = 1$ if, and only if, all cycles in $G$ have length $2$. In particular, if $G$ is simple, then $a_G = 1$ if, and only if, $G$ is a tree.
\end{lemma}

\begin{proof}
Suppose that $G$ is such that all its cycles have length $2$, and let $e = (v,w)$ be a multi-edge corresponding to such a cycle (i.e.\ $e$ has multiplicity at least $2$ in $G$). By definition, we have $a_G = a_{G \setminus e} +  a_{G \cdot e}$. But contracting $e$ in $G$ yields a loop at the new vertex $vw$, since $e$ was a multi-edge in $G$. Therefore $a_{G \cdot e} = 0$ by Lemma~\ref{lem:tutte_loop}, which implies that $a_G = a_{G \setminus e}$. Repeating this process until all mutli-edges have been removed, we are left with a tree $G'$ and $a_G = a_{G'}$. But now all edges in $G'$ are bridges, and we may contract them one-by-one until we reach a graph consisting of a single vertex. Applying Equation~\eqref{eq:Tutte_poly} throughout this process immediately yields $a_{G'} = 1$, as desired.

Conversely, suppose that $G$ contains a cycle of length other than $2$. As above, we may remove all multi-edges of $G$ without changing the acyclic orientation number. Moreover, if $G$ contains a loop, we have $a_G = 0$ by Lemma~\ref{lem:tutte_loop}.  We therefore assume, without loss of generality, that $G$ is simple and contains a cycle of length $k \geq 3$. Let $e$ be an edge in such a cycle. Note that $e$ is not a bridge. Moreover, since $G$ is simple, contracting $e$ yields a graph which is loop-free, while deleting $e$ yields a simple (connected) graph. Therefore, applying the Deletion-contraction Equation~\eqref{eq:Tutte_poly} at edge $e$, combined with Lemma~\ref{lem:tutte_loop}, we get $ a_G = a_{G \setminus e} + a_{G \cdot e} \geq 1 + 1 = 2$, as desired.
\end{proof}

Our next result can be thought of as a generalisation of Lemma~\ref{lem:tutte_tree} in the simple case. It shows that ``pruning'' a graph $G$ of its tree branches does not affect the acyclic orientation number. Given a graph $G$, we define the \emph{2-core} of $G$, denoted $\Prune[G]$, through the following algorithmic procedure.
\begin{enumerate}
\item\label{prune:init} Initialise $H = G$.
\item\label{prune:remove} If $H$ contains no vertex of degree $1$, move to Step~3. Otherwise, choose a vertex $v$ with degree $1$, set $H = H \setminus \{v\}$, and repeat Step~2.
\item\label{prune:output} Output $H := \Prune[G]$.
\end{enumerate}
Another way of viewing this procedure is that it removes ``tree branches'' that were attached at some vertices in the graph $G$. An example is shown in Figure~\ref{fig:pruning} below. Note that the output $\Prune[G]$ only depends on the choice of vertex $v$ in Step~\eqref{prune:remove} if at some stage of the process we reach a graph $H$ which is reduced to a single edge, which occurs in fact exactly when the original graph $G$ is a tree. In that case, removing either of the two vertices will yield a graph consisting of a single vertex, and we simply set $\Prune[G]$ to be any arbitrary vertex of the original graph.

\begin{figure}[ht]
\centering
\begin{tikzpicture}[scale=0.9]
\node [draw, circle] (1) at (0,0) {};
\node [draw, circle] (2) at (-1,1.5) {};
\node [draw, circle] (3) at (-3,1.5) {};
\node [draw, circle] (4) at (-4,0) {};
\node [draw, circle] (5) at (-3,-1.5) {};
\node [draw, circle] (6) at (-1,-1.5) {};
\node [draw, circle] (7) at (-2.67,0) {};
\node [draw, circle] (8) at (-1.33,0) {};
\node [draw, circle] (9) at (1.5, 1) {};
\node [draw, circle] (10) at (1.5, -1) {};
\draw [thick] (1)--(2)--(3)--(4)--(5)--(6)--(1)--(9)--(10)--(1)--(8)--(7)--(4);
\draw [thick] (5)--(7)--(2);
\draw [thick] (6)--(8);
\node at (-2,-2){$G$};
\node [draw, circle, blue] (31) at (-4, 2.5) {};
\node [draw, circle, blue] (32) at (-3, 2.5) {};
\node [draw, circle, blue] (33) at (-2, 2.5) {};
\node [draw, circle, blue] (334) at (-1, 2.5) {};
\node [draw, circle, blue] (335) at (-1, 3.5) {};
\draw [thick, blue] (31)--(3)--(32);
\draw [thick, blue] (3)--(33)--(334)--(33)--(335);
\node [draw, circle, blue] (101) at (0.5,-1.5) {};
\draw [thick, blue] (101)--(10);
\node [draw, circle, blue] (41) at (-5,0.7) {};
\node [draw, circle, blue] (42) at (-5,-0.7) {};
\node [draw, circle, blue] (421) at (-5,-2) {};
\draw [thick, blue] (421)--(42)--(4)--(41);
\node [draw, circle, blue] (11) at (0.5,1.1){};
\node [draw, circle, blue] (111) at (0,1.8){};
\node [draw, circle, blue] (112) at (1,1.8){};
\draw [thick, blue] (1)--(11)--(111)--(11)--(112);

\begin{scope}[shift={(7,0)}]
\node [draw, circle] (1) at (0,0) {};
\node [draw, circle] (2) at (-1,1.5) {};
\node [draw, circle] (3) at (-3,1.5) {};
\node [draw, circle] (4) at (-4,0) {};
\node [draw, circle] (5) at (-3,-1.5) {};
\node [draw, circle] (6) at (-1,-1.5) {};
\node [draw, circle] (7) at (-2.67,0) {};
\node [draw, circle] (8) at (-1.33,0) {};
\node [draw, circle] (9) at (1.5, 1) {};
\node [draw, circle] (10) at (1.5, -1) {};
\draw [thick] (1)--(2)--(3)--(4)--(5)--(6)--(1)--(9)--(10)--(1)--(8)--(7)--(4);
\draw [thick] (5)--(7)--(2);
\draw [thick] (6)--(8);
\node at (-2,-2){$\Prune[G]$};
\end{scope}

\end{tikzpicture}

\caption{Illustrating the pruning operation: a graph $G$ on the left, and its 2-core $\Prune[G]$ on the right. The tree branches of $G$ (removed in the pruning) are represented in blue.\label{fig:pruning}}
\end{figure}
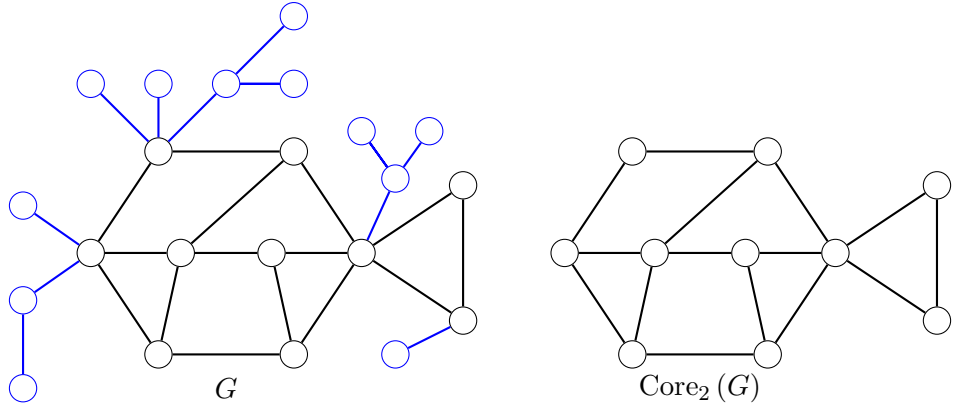

\begin{lemma}\label{lem:tree_pruning_tutte}
For any graph $G$, we have $a_G = a_{\Prune[G]}$.
\end{lemma}

\begin{proof}
Let $G$ be a graph with a vertex $v$ of degree $1$, and consider $e$ the unique edge incident to $v$. By definition, we have $G \setminus \{v\} = G \cdot e$, and $e$ is a bridge in $G$ (deleting it disconnects $v$ from the rest of the graph). Therefore we have $a_{G \setminus \{v\} } = a_{G \cdot e} = a_G$ by Equation~\eqref{eq:Tutte_poly} (bridge case). The result follows by iterating the above until no vertices of degree $1$ remain.
\end{proof}

Our next lemma concerns graphs whose pruned graph is a cycle. For $k \geq 1$, we say that a graph $G$ is a \emph{decorated $k$-cycle} if its pruned graph $\Prune[G]$ is the $k$-cycle $C_k$. Equivalently, $G$ contains a $k$-cycle, and deleting any edge in that $k$-cycle makes $G$ a tree.

\begin{lemma}\label{lem:tutte_dec_cycle}
If $G$ is a decorated $k$-cycle for some $k \geq 1$, then we have $a_G = k-1$.
\end{lemma}

\begin{proof}
If $G$ is a decorated $k$-cycle, then $\Prune[G]$ is the $k$-cycle $C_k$ by definition. Lemma~\ref{lem:tree_pruning_tutte} then yields that $a_G = a_{C_k}$. Applying the Deletion-contraction Equation~\eqref{eq:Tutte_poly} gives $a_{C_k} = a_{P_k} + a_{C_{k-1}}$ (where $P_k$ is the path graph on $k$ vertices). Since path graphs are trees, Lemma~\ref{lem:tutte_tree} implies that $a_{P_k} = 1$ for any $k \geq 1$, so that $a_{C_k} = k-1 + a_{C_1}$. But $C_1$ is the graph consisting of a single loop, so Lemma~\ref{lem:tutte_loop} implies that $a_{C_1} = 0$, and the result follows immediately.
\end{proof}

Our next lemma shows that adding an edge to a graph $G$ strictly increases its acyclic orientation number.

\begin{lemma}\label{lem:add_edge_tutte}
Let $G$ be a \emph{simple} graph, with a pair of \emph{distinct} vertices $v,w$ such that $(v,w)$ is \emph{not} an edge of $G$. Let $G' := G \cup \{(v,w)\}$ be the graph $G$ to which we add the edge $(v,w)$. Then we have $a_G < a_{G'}$.
\end{lemma}

If $G'$ is obtained from $G$ as above, we say that $G'$ is an \emph{edge addition} of $G$.

\begin{proof}
Let $G$, $e:=(v,w)$, and $G'$ be as in the statement of the lemma, i.e.\ $G = G' \setminus e$. By the Deletion-contraction Equation~\eqref{eq:Tutte_poly}, we have $a_{G'} = a_G + a_{G' \cdot e}$. Moreover, by construction, since $G$ is simple, the graph $G' \cdot e$ is loop-free. The result then follows from Lemma~\ref{lem:tutte_loop}. 
\end{proof}

Finally, the following result will be widely used throughout Section~\ref{sec:cnat_count} to obtain bounds on the acyclic orientation numbers of various graphs.

\begin{lemma}\label{lem:strict_subgraph}
Let $G$ be a simple graph, and $G'$ a subgraph of $G$. Then we have $a_{G'} \leq a_G$. Moreover, if the $2$-cores $\Prune[G]$ and $\Prune[G']$ are \emph{not} isomorphic, then we have $a_{G'} < a_G$.
\end{lemma}

\begin{proof}
Fix some vertex $s \in V(G')$. Consider the following algorithmic procedure.
\begin{enumerate}
\item Initialise $H = G'$.
\item While $V(H) \subsetneq V(G)$, do:
  \begin{enumerate}
  \item Choose a vertex $v \in V(G) \setminus V(H)$.
  \item Since $G$ is connected, there exists a path from $v$ to $s$ in $G$. Consider such a path, and let $w$ be the first vertex in $V(H)$ encountered on this path.
  \item Set $H$ to be the union of $H$ and the above path from $v$ to $w$.
  \end{enumerate}
\item Output $H :=G^0$.
\end{enumerate}
By construction, the above process outputs a spanning subgraph $G^0$ of $G$ such that $\Prune[G^0]$ and  $\Prune[G']$ are isomorphic (constructing $G^0$ from $G'$ only adds ``tree branches''). In particular by Lemma~\ref{lem:tree_pruning_tutte}, we have $a_{G^0} = a_{G'}$. Then the graph $G$ can be reached from $G^0$ through a (possibly empty) sequence of edge additions. Moreover, if $\Prune[G]$ and $\Prune[G']$ are not isomorphic, there must be at least one edge in $E(G)$ which is \emph{not} in $E(G^0)$ (otherwise we would have $G^0 = G$ and therefore $\Prune[G] = \Prune[G^0]$ and $\Prune[G']$ would be isomorphic), in which case this sequence must be non-empty. The results then follow from Lemma~\ref{lem:add_edge_tutte}.
\end{proof}

\subsection{Permutations, permutation patterns, and permutation graphs}\label{subsec:perms_prelims}

For $n \geq 1$, we let $S_n$ be the set of permutations of size $n$ (called $n$-permutations for short). We will usually use standard one-line notation for permutations. That is, a permutation $\pi \in S_n$ is a word $\pi = \pi_1 \cdots \pi_n$ on the alphabet $[n]$ such that every letter in $[n]$ appears exactly once in $\pi$. By convention, the unique $0$-permutation is simply the empty word. For $i,j \in [n], i \neq j$, we write $i \prec_{\pi} j$ if $i$ appears before (to the left of) $j$ in the permutation (word) $\pi$.

Another useful way of viewing permutations is their \emph{plot}. In this view, an $n$-permutation is a set of dots in an $n \times n$ grid such that every row and every column of the grid contains exactly one dot. For a permutation $\pi = \pi_1 \cdots \pi_n$, we construct its plot by putting dots in column $i$ and row $\pi_i$ for $i = 1, \ldots, n$. When connecting permutations to CNATs, it will be convenient to label columns from left-to-right and rows from top-to-bottom in this representation, as in Figure~\ref{fig:perm_graphical_repr} below.

\begin{figure}[ht]
\centering
\begin{tikzpicture}[scale=0.4]
\draw[step=2cm,thick] (0,0) grid (12,-12.01);
\tdot{1}{-9}{blue}
\tdot{3}{-11}{blue}
\tdot{5}{-1}{blue}
\tdot{7}{-3}{blue}
\tdot{9}{-7}{blue}
\tdot{11}{-5}{blue}
\foreach \x in {1,...,6}
	\node at (-1+2*\x,1) {$\x$};
\foreach \x in {1,...,6}
	\node at (-1,1-2*\x) {$\x$};
\end{tikzpicture}

\caption{Plot of the permutation $\pi = 561243$.\label{fig:perm_graphical_repr}}
\end{figure}
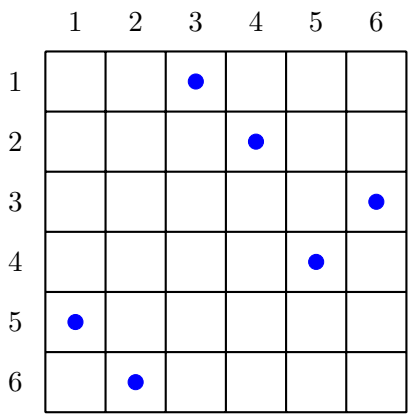

A permutation $\pi = \pi_1 \cdots \pi_n$ is said to be \emph{reducible} if there exists $1 \leq k < n$ such that $\pi_1 \cdots \pi_k$ is a $k$-permutation. A permutation is \emph{irreducible} if it is not reducible. Such an index $k$ is called a \emph{breakpoint} of the permutation $\pi$. For example, the permutation $\textbf{312}564$ is reducible, while $561243$ is irreducible.\footnote{
The terminology of \emph{(in)decomposable}, or \emph{sum (in)decomposable}, is also used sometimes in the literature instead of (ir)reducible.
}
A \emph{fixed point} of a permutation $\pi$ is an index $j \in [n]$ such that $\pi_j = j$. A \emph{left-to-right minimum} is a letter $\pi_i$ such that $\pi_j > \pi_i$ for all $j < i$. For example, the permutation $\textbf{5}6\textbf{1}243$ has two left-to-right minima $5$ and $1$.

Given a permutation $\pi$ and two letters $i,j \in [n]$, we say that $(i,j)$ is an \emph{inversion} of $\pi$ if $i > j$ and $i \prec_{\pi} j$. A \emph{descent} of $\pi$ is an inversion consisting of consecutive letters in the word, that is $i = \pi_k$ and $j = \pi_{k+1}$ for some $k \in [n-1]$. For example, the permutation $\pi = 561243$ has the following inversions (\textbf{descents} in bold): $(5,1), (5,2), (5,4), (5,3), \textbf{(6,1)}, (6,2), (6,4), (6,3), \textbf{(4,3)}$.  The \emph{permutation graph} of $\pi$ is the graph with vertex set $[n]$ and edge set the set of inversions of $\pi$. We denote this graph $G_{\pi}$. Figure~\ref{fig:perm_graph} shows the permutation graph of $\pi = 561243$. Note that the labels of the vertices are the row labels in the plot of $\pi$.

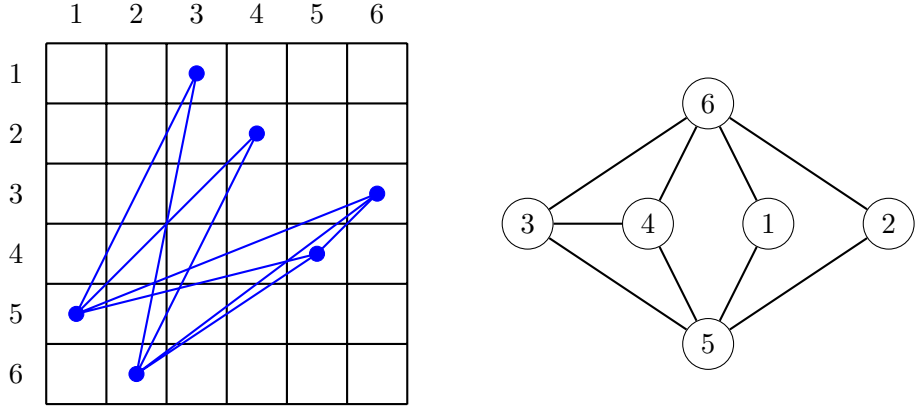
\begin{figure}[ht]
\centering
\begin{tikzpicture}[scale=0.4]
\draw[step=2cm,thick] (0,0) grid (12,-12.01);
\tdot{1}{-9}{blue}
\tdot{3}{-11}{blue}
\tdot{5}{-1}{blue}
\tdot{7}{-3}{blue}
\tdot{9}{-7}{blue}
\tdot{11}{-5}{blue}
\foreach \x in {1,...,6}
	\node at (-1+2*\x,1) {$\x$};
\foreach \x in {1,...,6}
	\node at (-1,1-2*\x) {$\x$};
\draw [thick, color=blue] (5,-1)--(1,-9)--(7,-3);
\draw [thick, color=blue] (9,-7)--(1,-9)--(11,-5);
\draw [thick, color=blue] (5,-1)--(3,-11)--(7,-3);
\draw [thick, color=blue] (9,-7)--(3,-11)--(11,-5);
\draw [thick, color=blue] (9,-7)--(11,-5);

\begin{scope}[shift={(22,-10)}, rotate=90]
\node [draw, circle] (3) at (4,6) {$3$};
\node [draw, circle] (4) at (4,2) {$4$};
\node [draw, circle] (1) at (4,-2) {$1$};
\node [draw, circle] (2) at (4,-6) {$2$};
\node [draw, circle] (5) at (0,0) {$5$};
\node [draw, circle] (6) at (8,0) {$6$};
\draw [thick] (5)--(3)--(6)--(4)--(5)--(1)--(6)--(2)--(5);
\draw [thick] (4)--(3);
\end{scope}
\end{tikzpicture}

\caption{The permutation graph of $\pi = 561243$. On the left, we draw the edges corresponding to inversions of $\pi$ on its plot. On the right, we re-draw the permutation graph in more readable form.\label{fig:perm_graph}}

\end{figure}

\begin{proposition}\label{pro:perm_graph_connected}
Let $\pi$ be a permutation. The permutation graph $G_{\pi}$ is connected if and only if the permutation $\pi$ is irreducible.
\end{proposition}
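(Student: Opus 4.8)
The plan is to prove both directions by relating the block structure of $\pi$ to the connected components of $G_\pi$. The key observation is that an inversion $(i,j)$ of $\pi$ (with $i > j$ and $i \prec_\pi j$) is an edge of $G_\pi$, so two letters can be ``connected'' only through a chain of such inversions. I would first establish the contrapositive of the ``only if'' direction: if $\pi$ is reducible, then $G_\pi$ is disconnected. Suppose $\pi_1 \cdots \pi_k$ is a $k$-permutation for some $1 \leq k < n$. Then the letters $\{1, \dots, k\}$ all occur among the first $k$ positions, and the letters $\{k+1, \dots, n\}$ all occur among the last $n-k$ positions. For any $i \in \{1,\dots,k\}$ and $j \in \{k+1,\dots,n\}$ we have $i < j$, and since $i$ occurs before $j$ in $\pi$, the pair $(j,i)$ is not an inversion (that would require $j \prec_\pi i$), nor is $(i,j)$ (that requires $i > j$). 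Hence there is no edge of $G_\pi$ between $\{1,\dots,k\}$ and $\{k+1,\dots,n\}$, so $G_\pi$ is disconnected.

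For the ``if'' direction, I would again argue contrapositively: if $G_\pi$ is disconnected, then $\pi$ is reducible. Let $V(G_\pi) = [n]$ be partitioned into the vertex sets of its connected components. The natural thing is to look at the component $A$ containing the vertex $1$ — or more robustly, to take a union of components that forms a ``downward closed'' set. Concretely, consider the connected component structure and let $k$ be the largest integer such that $\{1, 2, \dots, k\}$ is a union of connected components of $G_\pi$; since $G_\pi$ is disconnected and each singleton-or-larger component is a block of the partition, such a $k$ with $1 \le k < n$ exists (one can see $\{1,\dots,k\}$ is a union of components by an extremal/closure argument: if $i \le k$ and $i$ is adjacent to some $j$, I must show $j \le k$, then iterate). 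The crucial claim is: \emph{if $S \subseteq [n]$ is a union of connected components of $G_\pi$ and $S$ is not all of $[n]$, and $S$ is chosen appropriately, then $S = \{1, \dots, |S|\}$ and moreover the letters of $S$ occupy exactly the first $|S|$ positions of $\pi$.} To prove $S$ must be a set of the form $\{1,\dots,m\}$: if $S$ contains some letter $a$ but not some letter $b < a$, then pick the pair to create an inversion — but this requires knowing the relative order in $\pi$, so I'd instead argue that a connected component, viewed in the graphical representation, corresponds to a set of rows with a certain ``interval-like'' stability property. The cleanest route: show that if $S$ is a union of components then there is no inversion between $S$ and its complement, meaning for all $a \in S$, $b \notin S$: $(a,b)$ is not an inversion and $(b,a)$ is not an inversion. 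Combining these for all such pairs forces, for every $a \in S$ and $b \notin S$, that ($a < b$ and $a \prec_\pi b$) or ($a > b$ and $b \prec_\pi a$) — i.e. the smaller one comes first. I then want to deduce that $S$ is a prefix-block.

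To finish cleanly, I would phrase it via the graphical representation: the absence of edges between $S$ and $[n] \setminus S$ means that in the grid, no dot in a row of $S$ and no dot in a row of $[n]\setminus S$ form an inversion pattern (one up-left, one down-right). Take $S$ to be the component containing the topmost-occurring... actually, take $m = \min([n]\setminus S)$ if $1 \in S$, and show all rows $< m$ lie in columns $< $ (position of row $m$'s... ). The main obstacle, and where I'd spend the most care, is exactly this step: upgrading ``no cross edges'' to ``$S = \{1,\dots,|S|\}$ and these letters fill the first $|S|$ positions.'' I expect the argument to go: let $S$ be any union of components with $1 \in S$ and $S \neq [n]$; let $j = \min([n] \setminus S)$. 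Every $i < j$ lies in $S$ (else $i \in [n]\setminus S$ would contradict minimality of $j$... wait, need $i \in S$: indeed if $i < j$ and $i \notin S$ then $i \in [n] \setminus S$ with $i < j$, contradicting $j = \min([n]\setminus S)$). So $\{1, \dots, j-1\} \subseteq S$. Now I claim $S = \{1,\dots,j-1\}$: if some $a \in S$ with $a \geq j$, then $a > j$ (as $j \notin S$), and since there's no edge $(a,j)$, the pair $(a,j)$ is not an inversion, so $j \prec_\pi a$; I then need to derive a contradiction or instead just directly track positions. Rather than force $S$ small, I'll instead directly show the prefix property for $k := j - 1 = \min([n]\setminus S) - 1$: I claim $\{\pi_1, \dots, \pi_k\} = \{1, \dots, k\}$. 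Since $\{1,\dots,k\} \subseteq S$ and $j = k+1 \notin S$, there is no edge between any $i \le k$ and $j = k+1$; as $i < k+1$, non-inversion forces $i \prec_\pi (k+1)$, so all of $1, \dots, k$ appear before $k+1$ in $\pi$. Similarly, for any $b > k+1$ with $b \in [n]\setminus S$: hmm, $b$ could be in $S$. This is the delicate point — I may need to pick $S$ more carefully (e.g. as the union of all components meeting $\{1,\dots,r\}$ for a well-chosen $r$) to guarantee $S = \{1,\dots,k\}$ exactly. Once $S = \{1,\dots,k\}$ with no cross-edges, for every $i \le k < b$: non-inversion of $(b,i)$ forces $i \prec_\pi b$, so all letters $\le k$ precede all letters $> k$, hence $\{\pi_1,\dots,\pi_k\} = \{1,\dots,k\}$ and $\pi$ is reducible. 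I would make the choice of $S$ rigorous by taking $k$ minimal such that the component containing $k+1$... — the combinatorial bookkeeping here is the one genuinely fiddly part, and everything else is routine.
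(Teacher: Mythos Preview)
The paper does not give its own proof here: it records the proposition as a classical fact and refers to \cite[Lemma~3.2]{KR}, so there is nothing on the paper's side to compare against.

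Your direction ``reducible $\Rightarrow$ disconnected'' is correct. For the converse you have the right plan, and you correctly isolate the only genuine difficulty: with $C$ the component of $G_\pi$ containing $1$ and $j=\min([n]\setminus C)$, you obtain $\{1,\dots,j-1\}\subseteq C$ but never establish equality; you flag this yourself (``the combinatorial bookkeeping here is the one genuinely fiddly part'') without resolving it, so the argument as written has a gap. The clean way to close it is to prove that \emph{every connected component of $G_\pi$ is an interval of $[n]$}. Suppose $a<b<c$ with $a,c\in C$ and $b\notin C$; absence of inversions between $b$ and $C$ gives $a\prec_\pi b\prec_\pi c$. Take a path $a=v_0,\dots,v_r=c$ in $G_\pi$ and track for each $v_i$ the sign pair $\big(\mathrm{sgn}(v_i-b),\,\mathrm{sgn}(p(v_i)-p(b))\big)$, where $p(\cdot)$ denotes position in $\pi$. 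The endpoints have signs $(-,-)$ and $(+,+)$. Along an inversion edge the larger value occupies the earlier position, so value and position change in opposite directions between consecutive $v_{i-1},v_i$; hence one cannot step directly between $(-,-)$ and $(+,+)$, and some $v_i$ must have signs $(+,-)$ or $(-,+)$. But that says exactly that $v_i$ and $b$ form an inversion, contradicting $b\notin C$. Thus $C=\{1,\dots,k\}$ for some $k<n$, and your final paragraph then finishes the proof.
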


This is a classical result on permutation graphs, see e.g.~\cite[Lemma~3.2]{KR}. From now on, we assume that all permutations are irreducible unless explicitly stated otherwise. One permutation that will play an important role in our paper, particularly in Section~\ref{sec:upper-diag} and Theorem~\ref{thm:max_b(n,k)}, is the \emph{decreasing} permutation, defined by $\dec[n] := n(n-1)\cdots 1$ (for some $n \geq 1$). In $\dec[n]$ every pair is an inversion, so the corresponding graph is the complete graph $K_n$.

Let $\pi$ be an $n$-permutation, and $\tau$ a $k$-permutation for some $k \leq n$. We say that $\pi$ \emph{contains} the \emph{pattern} $\tau$ if there exist indices $i_1 < i_2 < \cdots < i_k$ such that $\pi_{i_1}, \pi_{i_2}, \ldots, \pi_{i_k}$ appear in the same relative order as $\tau$. In other words, if we delete all columns and rows other than $(i_1, \pi_{i_1}), \ldots, (i_k, \pi_{i_k})$ from the plot of $\pi$, we get the plot of $\tau$. In this case, we call $\pi_{i_1} \cdots \pi_{i_k}$ an \emph{occurrence} of the pattern $\tau$. If $\pi$ does not contain the pattern $\tau$, we say that $\pi$ \emph{avoids} $\tau$. For example, the permutation $\pi = 561243$ contains two occurrences of the pattern $321$, given by the bolded letters: $\textbf{5}612\textbf{43}$, $5\textbf{6}12\textbf{43}$. However, it avoids the pattern $4321$ since there is no sequence of $4$ letters in decreasing order.

The study of permutation patterns has been of wide interest in permutation research in recent decades, with much focus on the enumeration of permutations avoiding certain given patterns. For example, it is still an open problem to establish the growth rate of the set of $n$-permutations avoiding the pattern $1324$ (see~\cite{BBPP} and \cite{MN} for recent results in that direction). For more information on permutation patterns, we refer the interested reader to Kitaev's book~\cite{Kit} or Chapters~4 and 5 in the most recent edition of B\'ona's book~\cite{Bona}. For a detailed survey on permutation \emph{classes}, which are closely linked to the notion of pattern avoidance, we refer the reader to~\cite{Vat}.

Patterns of a permutation $\pi$ are linked to induced subgraphs of its permutation graph $G_{\pi}$ via the following observation. Let $\pi$ be an $n$-permutation, and $\tau$ a $k$-permutation, for some $k \leq n$. Suppose that $\pi_1, \ldots, \pi_k$ is an occurrence of the pattern $\tau$ in $\pi$. 
Then the induced subgraph $G\left[\{\pi_1, \ldots, \pi_k\} \right]$ is isomorphic to the permutation graph $G_{\tau}$. Conversely, if there exist vertices $\pi_1, \ldots, \pi_k$ such that $G \left[ \{ \pi_1, \ldots, \pi_k\} \right]$ is isomorphic to $G_{\tau}$, then $\pi_1, \ldots, \pi_k$ is an occurrence of some pattern $\tau'$ in $\pi$, such that the graphs $G_{\tau}$ and $G_{\tau'}$ are isomorphic. This yields the following.

\begin{proposition}\label{pro:perm_patterns_cycles}
Let $\pi$ be a permutation, and $G_{\pi}$ its corresponding permutation graph.
\begin{enumerate}
\item The graph $G_{\pi}$ induces a $3$-cycle if, and only if, the permutation $\pi$ contains the pattern $321$.
\item The graph $G_{\pi}$ induces a $4$-cycle if, and only if, the permutation $\pi$ contains the pattern $3412$.
\item The graph $G_{\pi}$ induces no cycle of length $5$ or more.
\end{enumerate}
\end{proposition}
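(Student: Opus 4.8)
The plan is to translate everything to the language of permutation patterns using the observation stated just before the proposition: a set of rows $\{\pi_{i_1}, \dots, \pi_{i_k}\}$ induces a subgraph isomorphic to $G_\tau$ precisely when $\pi_{i_1}, \dots, \pi_{i_k}$ is an occurrence of $\tau$. Since $G_\pi$ induces an $m$-cycle iff there are $m$ rows whose induced subgraph is $C_m = G_\tau$ for some $m$-permutation $\tau$ with $G_\tau \cong C_m$, the whole problem reduces to: for each $m$, determine which $m$-permutations $\tau$ have permutation graph equal to the cycle $C_m$. So the first step is to enumerate, for small $m$, all permutations whose permutation graph is a cycle.

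For $m = 3$: a $3$-permutation has permutation graph with $3$ edges iff it has $3$ inversions, which forces $\tau = 321$; and indeed $G_{321} = K_3 = C_3$. Conversely any $\tau \in S_3$ with $G_\tau \cong C_3$ must have exactly $3$ inversions, hence be $321$. This gives part (1). For $m = 4$: I would list the $4$-permutations with exactly $4$ inversions and check which of their permutation graphs is a $4$-cycle (connected, $2$-regular). A short case check shows $3412$ works ($G_{3412}$ has inversions $(3,1),(3,2),(4,1),(4,2)$, forming a $4$-cycle $1$–$3$–$2$–$4$–$1$), and one verifies no other $4$-permutation has permutation graph $C_4$ — the other candidates with four inversions, such as $4231$, $2413$, $3142$, $4123$, $1432$, etc., either are disconnected, contain a triangle, or have a vertex of degree $3$. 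This gives part (2).

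For part (3) I need that no permutation graph is a cycle of length $m \ge 5$; equivalently, $C_m$ is not a permutation graph for $m \ge 5$. The cleanest route is to invoke the known characterization that permutation graphs are exactly the graphs that are both comparability graphs and co-comparability graphs, and that $C_m$ for $m \ge 5$ fails to be a permutation graph — $C_5$ is self-complementary but not transitively orientable, and for $m \ge 6$ the complement $\overline{C_m}$ contains an induced $C_4$-free obstruction; alternatively one cites that the forbidden induced subgraphs for permutation graphs include all holes of length $\ge 5$. If I want to keep the paper self-contained, I would instead argue directly: suppose rows $a_1, \dots, a_m$ (in the order they appear in $\pi$, i.e.\ these are the values $\pi_{i_1}, \dots, \pi_{i_m}$ with $i_1 < \dots < i_m$) induce $C_m$; since consecutive values along the word must be comparable-or-not according to adjacency in $C_m$, and each vertex has degree exactly $2$, a parity/ordering argument on where the inversions can occur forces $m \le 4$.

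The main obstacle is part (3): ruling out all long induced cycles. The pattern-counting reductions in (1) and (2) are routine finite checks, but (3) is a genuine structural claim about permutation graphs. I expect the smoothest write-up cites the classical fact that permutation graphs are perfect and more specifically that the cycles $C_m$ with $m \ge 5$ are not permutation graphs (so a fortiori not induced subgraphs of one, since induced subgraphs of permutation graphs are permutation graphs); this fact is standard in the algorithmic-graph-theory literature. If a self-contained proof is preferred, the key lemma to isolate is that in any permutation graph, an induced cycle has length at most $4$, proved by tracking the left-to-right order of the cycle vertices and using that adjacency equals inversion.
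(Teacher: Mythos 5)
Your proposal takes essentially the same route as the paper: points (1) and (2) follow from the observation preceding the proposition together with the finite check that $321$ and $3412$ are the unique permutations whose permutation graphs are $C_3$ and $C_4$, and point (3) is, exactly as in the paper (which cites Proposition~1.7 of \cite{BV}), deferred to the known fact that cycles of length at least $5$ are not permutation graphs. One small slip worth fixing: the $4$-permutations with exactly four inversions are $2431$, $3241$, $3412$, $4132$, $4213$ --- not the examples you list ($4231$ has five inversions, while $2413$, $3142$, $4123$, $1432$ each have three) --- although your conclusion that $3412$ is the only one whose graph is $C_4$ is correct, since each of the other four has a vertex of degree $3$.
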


Points~(1) and (2) follow from the above observation, combined with the fact that $3$- and $4$-cycles correspond uniquely to the permutations $321$ and $3412$ respectively. Point~(3) was shown in~\cite[Proposition~1.7]{BV}.

\subsection{Complete non-ambiguous trees and associated permutations}\label{subsec:cnats_prelims}

In this section we define (complete) non-ambiguous trees and introduce their associated permutations. Non-ambiguous trees were originally introduced by Aval \textit{et al.}\ in~\cite{ABBS} as a special case of the tree-like tableaux from~\cite{ABN}. 

\begin{definition}\label{def:nat}
A \emph{non-ambiguous tree} (NAT) is a filling of an $m \times n $ rectangular grid, where each cell is either dotted or not, satisfying the following conditions.
\begin{enumerate}
\item\label{cond:cnat_def_nonempty} Every row and every column contains at least one dotted cell.
\item\label{cond:cnat_def_uniqueparent} Aside from the top-left cell, every dotted cell has either a dotted cell above it in the same column, or a dotted cell to its left in the same row, but not both.
\end{enumerate}
\end{definition}

Note that the two conditions imply that the top-left cell must always be dotted. The use of the word \emph{tree} to describe these objects comes from the following observation. Given a NAT $T$, we connect every dot $d$ not in the top-left cell to its \emph{parent} dot $p(d)$, which is the dot immediately above it in its column or to its left in its row (by Condition~\eqref{cond:cnat_def_uniqueparent} of the above definition, exactly one of these must exist). This yields a binary tree, rooted at the top-left dot (see Figure~\ref{fig:nat_cnat}).

Following tree terminology, for a NAT $T$, we call the dot lying in the top-left cell the \emph{root dot}, or simply the \emph{root}, of $T$. Similarly, a \emph{leaf dot} is a dot with no dots below it in the same column or to its right in the same row. An \emph{internal dot} is a dot which is not a leaf dot (this includes the root, unless the NAT is a single dotted cell). Given a NAT, it will be convenient to label the columns $1, 2, \dots, n$ from left to right, and the rows $1, 2, \dots, m$ from top to bottom.

A NAT is said to be \emph{complete} if the underlying tree is complete, i.e.\ every internal dot has exactly two children. More formally, a complete non-ambiguous tree (CNAT) is a NAT in which every dot either has both a dot below it in the same column and a dot to its right in the same row, or neither of these. The \emph{size} of a CNAT is its number of leaf dots, or equivalently one more than its number of internal dots. 

\begin{figure}[ht]
\centering
\begin{tikzpicture}[scale=0.35]
\draw [step=2] (2,2) grid (12,-8);
\foreach \x in {1,...,5}
  \node at (1+2*\x, 3) {$\x$};
\foreach \y in {1,...,5}
  \node at (1, 3-2*\y) {$\y$};
\draw [thick] (5,-7)--(5,-1)--(11,-1);
\draw [thick] (3,-5)--(3,1)--(9,1);
\draw [thick] (7,1)--(7,-3);
\draw [thick] (3,-1)--(5,-1);
\tdot{3}{1}{black}
\tdot{7}{1}{black}
\tdot{3}{-1}{black}
\tdot{5}{-1}{black}
\tdot{3}{-5}{blue}
\tdot{5}{-7}{blue}
\tdot{7}{-3}{blue}
\tdot{9}{1}{blue}
\tdot{11}{-1}{blue}

\begin{scope}[shift={(17,2)}]
\draw [step=2] (2,0) grid (10,-10);
\foreach \x in {1,...,4}
  \node at (1+2*\x, 1) {$\x$};
\foreach \y in {1,...,5}
  \node at (1, 1-2*\y) {$\y$};
\node at (10.5,0.95) {$=n$};
\node at (1.05,-10.2) { \rotatebox{90}{$=$} };
\node at (1,-11.2) {$m$};
\draw [thick] (5,-9)--(5,-3)--(9,-3);
\draw [thick] (3,-1)--(7,-1);
\draw [thick] (3,-3)--(5,-3);
\draw [thick] (7,-1)--(7,-5);
\draw [thick] (3,-1)--(3,-7);
\tdot{3}{-1}{black}
\tdot{7}{-1}{red}
\tdot{3}{-3}{black}
\tdot{5}{-3}{black}
\tdot{3}{-7}{blue}
\tdot{5}{-9}{blue}
\tdot{7}{-5}{blue}
\tdot{9}{-3}{blue}
\end{scope}
\end{tikzpicture}

\caption{Two examples of NATs. Leaf dots are represented in blue, and internal dots in black. The NAT on the left is complete, while the one on the right is not (the red dot has only one child).\label{fig:nat_cnat}}
\end{figure}
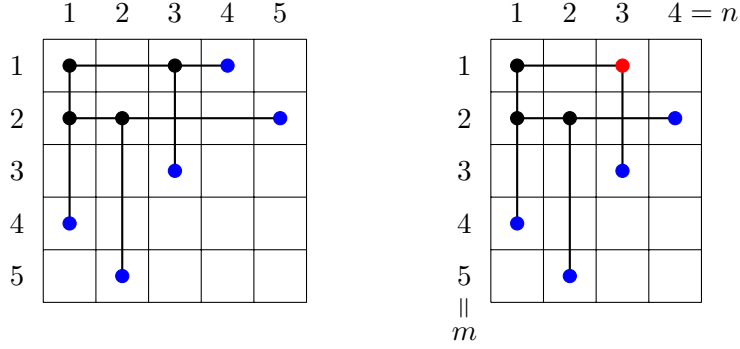

It is straightforward to check that in a CNAT, every row and every column must have exactly one leaf dot (see e.g.~\cite[Section~4.3]{DSSS2} or \cite[Section~2.1]{CO}). The leaf dot in a column, resp.\ row, is simply its bottom-most, resp.\ right-most, dot. As such, the set of leaf dots of a CNAT $T$ of size $n$ forms the plot of an $n$-permutation $\pi$, which must be irreducible (see e.g.~\cite[Theorem~3.3]{CO}). We say that $\pi$ is the permutation \emph{associated} with the CNAT $T$. For example, the CNAT on the left of Figure~\ref{fig:nat_cnat} has associated permutation $\pi = 45312$.  
We say that a row or column of a CNAT is \emph{empty} if it contains no dots other than its leaf dot.

\begin{definition}\label{def:cnat_number}
For a permutation $\pi$, we define the set $\CNAT[\pi]$ to be the set of CNATs whose associated permutation is $\pi$, and $\cnat[\pi] := \vert \CNAT[\pi] \vert$ to be the number of such CNATs.
\end{definition}

\subsection{Connecting CNATs to the acyclic orientation numbers}\label{subsec:cnats_tutte_prelims}

We are now ready to state the connection between CNATs and the acyclic orientation number. This is essentially a re-phrasing of~\cite[Corollary~33]{DSSS2} if one uses the bijection betwen minimal recurrent configurations of the Abelian sandpile model and $s$-rooted acyclic orientations of the underlying graph (see~\cite[Corollary~3]{Sch}).

\begin{theorem}\label{thm:cnat_tutte}
Let $\pi \in S_n$ be an $n$-permutation for some $n \geq 1$, and $G_{\pi}$ the corresponding permutation graph. Then we have
$$ \cnat[\pi] = a_{G_\pi}.$$
\end{theorem}


\section{Upper-diagonal CNATs and permutations}\label{sec:upper-diag}

In~\cite{CO}, the authors showed that for the decreasing permutation $\dec[n] = n(n-1)\cdots 1$, we have $\cnat[{\dec[n]}] = (n-1)!$. Their proof is via so-called tiered trees, with the authors exhibiting a bijection between upper-diagonal fully tiered trees on $n$ vertices and permutations of $[n-1]$. In this section, we give a new proof of this result by exhibiting a direct bijection between the set $\CNAT[{\dec[n]}]$ and the set of permutations of $[n-1]$. Our bijection also maps certain statistics of CNATs to statistics on the corresponding permutation. Following notation from~\cite{CO}, we call a CNAT \emph{upper-diagonal} if its associated permutation is a decreasing permutation.

\begin{definition}\label{def:labelled_cnat}
A \emph{labelled CNAT} is a pair 
$\T = (T, \lambda)$
 where $T$ is an upper-diagonal CNAT of size $(n+1)$ for some $n \geq 0$ and $\lambda = \{ \lambda_1, \lambda_2, \ldots, \lambda_n \} \subset \N^n$ is a set of $n$ distinct natural numbers. We call $\lambda$ the \emph{labels} of $\T$, and say that $\T$ is a $\lambda$-labelled CNAT.
\end{definition}

In the above, we think of $\lambda$ as labelling the columns of the CNAT $T$ other than the right-most column, as in Figures~\ref{fig:top_decomp_cnat} and \ref{fig:top_del_cnat} below (the labels are written on top of the columns). With a slight abuse of notation, we assume that $\lambda_1 < \lambda_2 < \cdots < \lambda_n$, and use $\lambda$ to denote both the set of labels and the ordered tuple $(\lambda_1, \ldots, \lambda_n)$. Note that the number of labels is one less than the size of the underlying CNAT.

Now suppose that $\T = (T, \lambda)$ is a $\lambda$-labelled CNAT, such that the underlying CNAT $T$ has at least two internal dots in the top row. Let $\lambda_r$ be the label of the right-most of these, i.e.\ the right-most internal dot in the top row of $T$ is in the $r$-th column. The \emph{top-row decomposition} of $\T$ is the pair $\left( \T^{\ell}, \T^r \right)$ defined as follows:
 \begin{itemize}
 \item $\T^r := (T^r, \lambda^r)$, where $T^r$ is the sub-tree of $T$ whose root is the dot in cell $(r,1)$ (the right-most internal dot in the top row of $T$). In other words, $T^r$ is obtained from $T$ by keeping exactly the rows and columns containing dots whose path to the root in $T$ goes through the dot in cell $(r,1)$. Then $\lambda^r$ is the set of labels whose columns have at least one dot in $T^r$.
 \item $\T^{\ell} := (T^{\ell}, \lambda^{\ell})$, where $T^{\ell}$ is obtained from $T$ by replacing the internal dot in cell $(r,1)$ by a leaf dot and moving it to the right-most column, and $\lambda^{\ell}$ is again the set of labels whose columns have at least one dot in $T^{\ell}$ (excluding the new leaf dot).
 \end{itemize}
We will sometimes refer to $\T^r$, resp.\ $\T^{\ell}$, as the right, resp.\ left, subtree of $\T$. We write $\T = \T^{\ell} \oplus \T^r$ for the top-row decomposition of a labelled CNAT $\T$. Note that this partitions the label set $\lambda$ into two (disjoint) subsets $\lambda^{\ell}$ and $\lambda^r$. This operation of a labelled CNAT is illustrated in Figure~\ref{fig:top_decomp_cnat}. 

\begin{remark}\label{rem:top_row_decomp_invert} The top-row decomposition operation is invertible in the following sense. Given two labelled CNATs $\T^{\ell} = (T^{\ell}, \lambda^{\ell})$ and $\T^r = (T^r, \lambda^r)$ with disjoint label sets such that the minimum label $\lambda^r_1$ of $\T^r$ is strictly greater than the labels of all (internal) top row dots in $\T^{\ell}$, then there exists a unique labelled CNAT $\T$ such that $\T = \T^{\ell} \oplus \T^r$. Here $\T$ is obtained by ``gluing'' the tree $T^r$ onto the right-most leaf of $T^{\ell}$ (in the top row), and shifting the columns of the glued subtree in such a way that the labelling of the tree obtained remains in increasing order (here we use the fact that the leaves are all on the diagonal to find the unique way of interleaving the columns).
\end{remark}

\begin{figure}[ht]
\centering
\begin{tikzpicture}[scale=0.3]
\node at (-3, 2.5) {$2$};
\node at (-1, 2.5) {$4$};
\node at (1, 2.5) {$5$};
\node at (3, 2.5) {$7$};
\node at (5, 2.5) {$9$};
\node at (7, 2.5) {$10$};
\node at (9, 2.5) {$13$};
\node at (11, 2.5) {$14$};
\node at (13, 2.5) {$16$};
\draw [thick] (15,1)--(1,1)--(1,-13);
\draw [thick] (5,-9)--(5,1);
\draw [thick] (1,-5)--(9,-5);
\draw [thick] (11,-3)--(5,-3);
\draw [thick] (7,-3)--(7,-7);
\draw [thick] (1,-11)--(3,-11);
\draw [thick] (-3,-17)--(-3,1)--(1,1);
\draw [thick] (-3,-1)--(13,-1);
\draw [thick] (-1,-15)--(-1,-1);
\tdot{-3}{1}{red}
\tdot{-3}{-1}{red}
\tdot{-1}{-1}{red}
\tdot{1}{1}{red}
\tdot{1}{-5}{red}
\tdot{7}{-3}{red}
\tdot{1}{-11}{red}
\tdot{5}{1}{red}
\tdot{5}{-3}{red}
\foreach \y in {1,...,10}
	\draw [fill=black,color=blue] (-5+2*\y,-19+2*\y) circle [radius=0.25];

\node at (18,-0.5) {$=$};

\begin{scope}[shift={(24,0)}]
\node at (-3, 2.5) {$2$};
\node at (-1, 2.5) {$4$};
\node at (1, 2.5) {$5$};
\node at (3, 2.5) {$7$};
\node at (5, 2.5) {$13$};
\node at (7, 2.5) {$16$};
\draw [thick] (-3,-11)--(-3,-1)--(-1,-1)--(-1,-9);
\draw [thick] (-3,-1)--(-3,1)--(1,1)--(1,-7);
\draw [thick] (3,-5)--(1,-5)--(1,-3)--(5,-3);
\draw [thick] (-1,-1)--(7,-1);
\draw [thick] (-1,1)--(9,1);
\tdot{-3}{1}{red}
\tdot{-3}{-1}{red}
\tdot{-1}{-1}{red}
\tdot{1}{1}{red}
\tdot{1}{-3}{red}
\tdot{1}{-5}{red}
\foreach \y in {1,...,7}
	\draw [fill=black,color=blue] (-5+2*\y,-13+2*\y) circle [radius=0.25];
\end{scope}

\node at (35,-0.5) {$\oplus$};

\begin{scope}[shift={(32,0)}]
\node at (5, 2.5) {$9$};
\node at (7, 2.5) {$10$};
\node at (9, 2.5) {$14$};
\draw [thick] (5,-5)--(5,1)--(11,1);
\draw [thick] (7,-3)--(7,-1)--(9,-1);
\draw [thick] (5,-1)--(7,-1);
\tdot{5}{1}{red}
\tdot{5}{-1}{red}
\tdot{7}{-1}{red}
\foreach \y in {1,...,4}
	\draw [fill=black,color=blue] (3+2*\y,-7+2*\y) circle [radius=0.25];
\end{scope}
\end{tikzpicture}
\caption{Illustrating the top-row decomposition of a labelled CNAT.\label{fig:top_decomp_cnat}}
\end{figure}
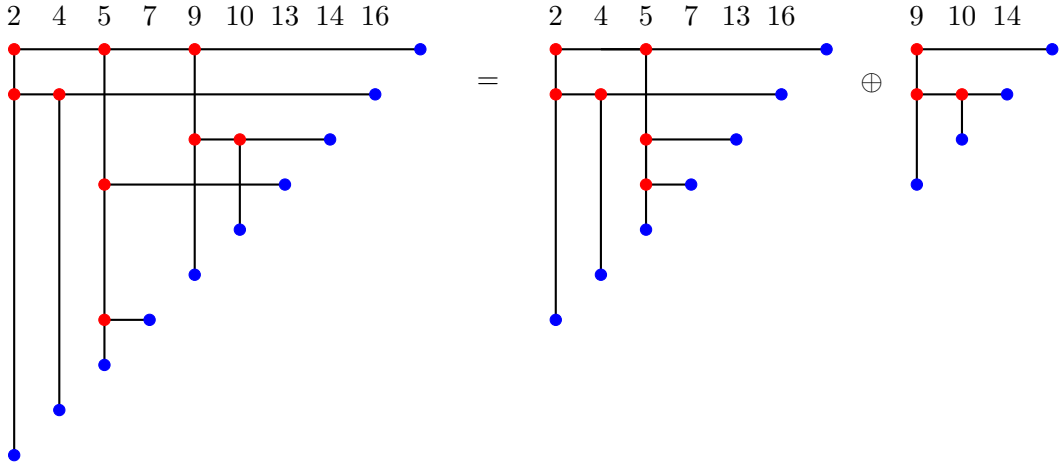

Now suppose that we have a labelled CNAT $\T = (T, \lambda)$ such that the only internal dot in the top row of $T$ is the root of the tree. Such a tree must necessarily have a dot in the second row of the first column, which we call $\rho'$. We define the \emph{top-row deletion} of $\T$ as the labelled tree $\T' := (T', \lambda')$ where $T'$ is the CNAT $T$ with top row deleted (i.e.\ the subtree rooted at $\rho'$), and $\lambda' := \lambda \setminus \{\lambda_1\}$ is the set of labels $\lambda$ with its minimum element removed. This operation is shown in Figure~\ref{fig:top_del_cnat}, with the labelled CNAT on the left, and its top-row deleted labelled CNAT on the right.

\begin{remark}\label{rem:top_row_del_invert}
The top-row deletion operation is not completely invertible, since we lose the information of the minimum label of $\T$. However, this is the only information that is lost. That is, given a labelled CNAT $\T' = (T', \lambda')$, and a label $k < \min \lambda'$, there exists a unique labelled CNAT $\T = (T, \lambda:=\lambda' \cup \{k\})$ whose top-row deletion is $\T'$.
\end{remark}

\begin{figure}[ht]
\centering
\begin{tikzpicture}[scale=0.3]

\node at (3, 2.5) {$1$};
\node at (5, 2.5) {$2$};
\node at (7, 2.5) {$5$};
\node at (9, 2.5) {$7$};
\draw [thick] (5,-5)--(5,-1)--(9,-1);
\draw [thick] (3,-7)--(3,1)--(11,1);
\draw [thick] (3,-3)--(7,-3);
\draw [thick] (3,-1)--(5,-1);
\tdot{3}{1}{red}
\tdot{3}{-1}{red}
\tdot{5}{-1}{red}
\tdot{3}{-3}{red}
\foreach \y in {1,...,5}
	\draw [fill=black,color=blue] (1+2*\y,-9+2*\y) circle [radius=0.25];
	
\node at (14,-0.5) {$\longrightarrow$};

\begin{scope}[shift={(15,2)}]
\node at (3, 0.5) {$2$};
\node at (5, 0.5) {$5$};
\node at (7, 0.5) {$7$};
\draw [thick] (5,-5)--(5,-1)--(9,-1);
\draw [thick] (3,-7)--(3,-1);
\draw [thick] (3,-3)--(7,-3);
\draw [thick] (3,-1)--(5,-1);
\tdot{3}{-1}{red}
\tdot{5}{-1}{red}
\tdot{3}{-3}{red}
\foreach \y in {1,...,4}
	\draw [fill=black,color=blue] (1+2*\y,-9+2*\y) circle [radius=0.25];
\end{scope}
	
\end{tikzpicture}

\caption{Illustrating the top-row deletion of a labelled CNAT.\label{fig:top_del_cnat}}
\end{figure}
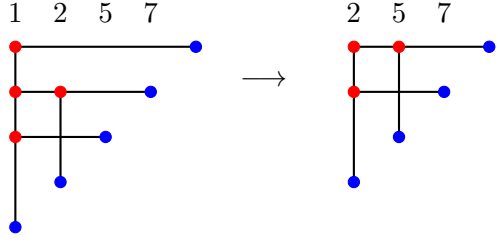

We are now equipped to define the desired bijection from upper-diagonal CNATs to permutations. In fact, given any label set $\lambda$, we define a bijection from $\lambda$-labelled CNATs to set-permutations of the label set $\lambda$ ($\lambda$-permutations for short). Since upper-diagonal CNATs of size $n$ can be viewed as $[n-1]$-labelled CNATs, this gives the desired bijection.

Fix a label set $\lambda = (\lambda_1, \ldots, \lambda_n)$, and a $\lambda$-labelled CNAT $\T = (T, \lambda)$. We define a set-permutation $\Psi(\T)$ of the label set $\lambda$ recursively as follows.
\begin{itemize}
\item If $T$ is the CNAT reduced to a single root dot, i.e.\ is the CNAT of size $1$ (in which case $\lambda = \emptyset$), we define $\Psi(\T)$ to be the empty word (base case).
\item If the only internal dot in the top row of $T$ is the root, we define recursively $\Psi(\T) := \lambda_1 \concat \Psi(\T')$ where $\T'$ is the top-row deletion of $\T$ (here the $\cdot$ operation denotes concatenation).
\item Otherwise, if there is more than one internal dot in the top row of $T$, we define recursively $\Psi(\T) := \Psi \left( \T^r \right) \concat \Psi \left( \T^{\ell} \right)$, where $\T = \T^{\ell} \oplus \T^r$ is the top-row decomposition of $\T$.
\end{itemize}

\begin{remark}\label{rem:alternative_tableaux_decomposition}
A similar decomposition was given for so-called \emph{alternative tableaux} by Nadeau~\cite[Section~2]{Nad}. Alternative tableaux are fillings of Ferrers diagrams with left- and up-arrows, and are known to be in bijection with tree-like tableaux~\cite{ABN}, which are themselves in bijection with upper-diagonal CNATs~\cite{CO}.

More precisely, Nadeau's decomposition uses a concept of \emph{free columns/rows} for alternative tableaux. In the (labelled) CNAT setting, free columns correspond to the labels of dots in the top row, and Nadeau's decomposition along free columns is essentially equivalent to our top-row decomposition operation. On the other hand, our top row deletion operation appears new, especially the relabelling that is required. In particular, this operation allows us to only act on the columns of the upper-diagonal CNAT, without needing to introduce equivalent decomposition operation(s) on its rows. Our construction also positions itself directly in the CNAT setting, and is very natural in terms of the underlying tree structure.
\end{remark}

\begin{example}\label{ex:bij_cnat_perm}
Consider the labelled CNAT $\T$ in the left of Figure~\ref{fig:top_decomp_cnat}. We construct the associated $\lambda$-permutation $\Psi(T)$ of the label set $\lambda = (2, 4, 5, 7, 9, 10, 13, 14, 16)$. The first step is the top-row decomposition of $\T$ as in the figure. We therefore have $\Psi(\T) = \Psi \left( \T^r \right) \concat \Psi \left( \T^{\ell} \right)$, where $\T^r$ is the CNAT with label set $(9, 10, 14)$ and $\T^{\ell}$ the CNAT with label set $(2, 4, 5, 7, 13, 16)$ in the right-hand side of the equality in Figure~\ref{fig:top_decomp_cnat}. Here $\T^r$ has a unique dot in the top row, so $\Psi\left( \T^r \right) = 9 \concat \Psi \big( \left( \T^r \right)' \big)$, where $\left( \T^r \right)'$ is the top-row deletion of $\T^r$. This is the CNAT with $2$ internal dots in the top row and label set $(10, 14)$. Applying top-row decomposition followed by top-row deletion and the base case to the two subtrees gives $\Psi \big( \left( \T^r \right)' \big) = 14 \concat1 0$, so that $\Psi \left( \T^r \right) = 9 \concat 14 \concat 10$.

We then compute $\Psi \left( \T^{\ell} \right)$. Again, we start by applying top-row decomposition. The right sub-tree $\left( \T^{\ell} \right)^{r}$ in this decomposition has all its internal dots in the left-most column. One can easily check through successive applications of top-row deletion that such a labelled CNAT maps to the increasing permutation of its label set, so that $\Psi \left( \left( \T^{\ell} \right)^{r} \right) = 5 \concat 7 \concat 13 $. Finally, applying top-row deletion followed by top-row decomposition on the left sub-tree $\left( \T^{\ell} \right)^{\ell}$ gives $\Psi \left( \left( \T^{\ell} \right)^{\ell} \right) = 2 \concat 16 \concat 4$. Bringing it all together, we get $\Psi \left( \T \right) = 9 \concat 14 \concat 10 \concat 5 \concat 7 \concat 13 \concat 2 \concat 16 \concat 4$. 
\end{example}

Note that in Example~\ref{ex:bij_cnat_perm} above, the left-to-right minima of the $\lambda$-permutation $\pi := \Psi(\T)$ are exactly the labels of the top-row dots of $\T$, i.e.\ $2, 5, 9$. Moreover, $\pi$ has $4 = 5-1$ descents, and there are $5$ empty rows, i.e.\ rows whose only dot is their leaf dot, in the underlying CNAT $T$ (the rows whose leaves are in columns labelled $2$, $4$, $5$, $9$, and $10$). It turns out that both of these observations are true in general, which is the main result of this section.

\begin{theorem}\label{thm:bij_cnat_perm}
For any label set $\lambda$, the map $\Psi : \T \mapsto \Psi(T)$ is a bijection from the set of $\lambda$-labelled CNATs to the set of $\lambda$-permutations. Moreover, this bijection maps column labels of internal top row dots in $\T$ to left-to-right minima in the $\lambda$-permutation $\Psi(\T)$. Finally, the number of empty rows in the underlying CNAT $T$ is equal to one plus the number of descents of $\Psi(\T)$.
\end{theorem}

In particular, if the label set is $\lambda = [n-1]$ for some $n \geq 1$, this gives us a bijection between upper-diagonal CNATs of size $n$ and $(n-1)$-permutations, providing a direct proof of Theorem~4.12 in~\cite{CO}, as announced.

\begin{remark}\label{rem:bij_cnat_perm_asm}
Recall that upper-diagonal CNATs are those whose associated permutation is $\dec[n] = n(n-1)\cdots 1$. The corresponding permutation graph $G_{\dec[n]}$ is simply the complete graph on $n$ vertices $K_n$. It is straightforward to see that acyclic orientations of $K_n$ are simply linear orders of its vertex set, i.e.\ $n$-permutations, and fixing the unique sink of these yields $(n-1)$-permutations, so that $a_{K_n} = (n-1)!$ by Proposition~\ref{pro:tutte_acyc_or}. 

On the other hand, the proof of Theorem~\ref{thm:cnat_tutte} in~\cite{DSSS2} goes through an alternate interpretation of the acyclic orientation number $a_{G_{\pi}}$ in terms of spanning trees of $G_{\pi}$ with no external activity (these are shown to correspond bijectively to CNATs with associated permutation $\pi$). In this context, Theorem~\ref{thm:bij_cnat_perm} can be viewed as a bijection between spanning trees with no external activity and sink-rooted acyclic orientations in the special case of complete graphs. Such bijections do in fact exist in the general case (see e.g.~\cite{Ber} or \cite{GesSag}). The novelty of our bijection is placing this work directly in the CNAT setting, as well as the preservation of relevant statistics.
\end{remark}

\begin{proof}
Since top-row decomposition partitions the label set $\lambda$ into two (smaller) label sets, and top-row deletion removes the minimum label $\lambda_1$, we have that $\Psi(\T)$ is a $\lambda$-permutation by a straightforward induction on the size of $\lambda$ using the recursive definition. 

Let us now show that if there is a dot in the top row in the column labelled $\lambda_i$ of $\T$, then $\lambda_i$ is a left-to-right minimum of the permutation $\Psi(\T)$. Again, we proceed by induction on $n$, the size of the label set $\lambda$. For $n=0$ the result is trivial. Now fix some $n > 0$, and suppose we have shown this for all label sets $\lambda'$ of size at most $n-1$. Fix a label set $\lambda = (\lambda_1, \ldots, \lambda_n)$ of size $n$, and a $\lambda$-labelled CNAT $\T$.

Firstly, consider the case where $\T$ has a unique internal dot in the top row, which is necessarily the root, in column labelled $\lambda_1 = \min \lambda$. By construction $\lambda_1$ is the first element in the $\lambda$-permutation $\Psi(\T)$, and is therefore its unique left-to-right minimum, as desired.

It thus remains to consider the case where $\T$ has at least two internal dots in the top row. In this case write the top-row decomposition $\T = \T^{\ell} \oplus \T^r$ of $\T$.  By construction the labels of top-row internal dots of $\T$ are exactly those of top-row internal dots of $\T^{\ell}$, plus the label $\lambda^r_1$ of the right-most (internal) dot in that row (the root of $T^r$). By the induction hypothesis, $\Psi(\T^{\ell})$ is a permutation of $\lambda^{\ell}$ whose left-to-right minima are the column labels of the top-row dots of $\T^{\ell}$. In particular, this implies that the first letter in $\Psi(\T^{\ell})$ is strictly less than $\lambda^r_1$. Moreover, by the above argument $\Psi(\T^r)$ is a permutation of $\lambda^r$ starting with $\lambda^r_1$ (the minimal element of $\lambda^r$). This implies that the left-to-right minima in the permutation $\Psi(\T) = \Psi(\T^r) \concat \Psi(\T^{\ell})$ are exactly $\lambda^r_1$ and the left-to-right minima of $\Psi(\T^{\ell})$, and the result follows by applying the induction hypothesis to $\Psi(\T^{\ell})$.

To show that $\Psi$ is a bijection, we construct its inverse recursively, and show it maps left-to-right minima to top-row internal dots in the labelled CNAT. Let $\pi = \pi_1\cdots \pi_n$ be a permutation of a label set $\lambda = (\lambda_1, \ldots, \lambda_n)$. We define a $\lambda$-labelled CNAT $\T := \Phi(\pi)$ as follows.
\begin{itemize}[topsep=2pt]
\item If $\lambda = \emptyset$ ($\pi$ is the empty word in this case), we set $T$ to be the CNAT consisting of a single dot, and $\T = (T, \emptyset)$ to be the labelled CNAT (base case).
\item If $\pi_1 = \lambda_1$ (the first element of $\pi$ is the minimum label in $\lambda$), then we first calculate recursively $\T' = \Phi(\pi_2\cdots \pi_n)$, and then let $\T$ be the unique $\lambda$-labelled CNAT whose top-row deletion is $\T'$ (see Remark~\ref{rem:top_row_del_invert}). By construction the CNAT has a unique internal dot in the top-row, which is in column labelled $\lambda_1$, giving the desired result.
\item Otherwise, define $k = \min \{i \geq 2; \pi_i < \pi_1 \}$, and decompose $\pi = (\pi_1 \cdots \pi_{k-1})(\pi_k \cdots \pi_n) := \pi^r \pi^{\ell}$. In other words, the first element of $\pi^{\ell}$ is the second left-to-right minimum in $\pi$. Then calculate $\T^{\ell} = \Phi(\pi^{\ell})$ and $\T^r = \Phi(\pi^r)$ recursively. By construction, the permutation $\pi^r$ begins with its minimum label, so the top-row of $\T^r$ is empty except for the leaf and root dots from the above case. Moreover, the minimum label of $\T^r$ is $\pi_1$, which by definition is greater than all left-to-right minima in $\pi^{\ell}$. By the induction hypothesis, this means exactly that the minimum label of $\T^r$ is greater than the labels of all the top-row internal dots in $\T^{\ell}$. Remark~\ref{rem:top_row_decomp_invert} then states that there exists a unique labelled CNAT $\T$ such that $\T = \T^{\ell} \oplus \T^r$, and we set $\Phi(\pi) = \T$. By construction, the column labels of the top-row dots of $\T$ are those of $\T^{\ell}$ plus that of the root of $\T^r$, which by the induction hypothesis are exactly the left-to-right minima of $\pi$, as desired.
\end{itemize}
That $\Psi$ and $\Phi$ are inverse of each other is straightforward from the construction, which proves the first part of the theorem.

It remains to show that the number of empty rows in $\T$ is one plus the number of descents of $\Psi(\T)$. For a labelled CNAT $\T$, resp.\ a permutation $\pi$, let $\ER[\T]$, resp.\ $\des[\pi]$, be its number of empty rows, resp.\ descents. Again, we proceed by induction on the size $n$ of the label set $\lambda$. The base case $n=0$ is trivial. For $n \geq 1$, suppose we have shown that $\ER[\T'] = 1 + \des[\Psi(\T')]$ for all $\lambda'$ of size at most $n-1$ and $\lambda'$-labelled CNATs $\T'$.

Fix some label set $\lambda = (\lambda_1, \ldots, \lambda_n)$ and $\lambda$-labelled CNAT $\T$. 
\begin{itemize}[topsep=2pt]
  \item If $\T$ has a unique internal dot in its top-row, let $\T'$ be the top-row deletion of $\T$. Then $\ER[\T] = \ER[\T'] = 1 + \des[\Psi(\T')]$ by induction. But by construction $\Psi(\T) = \lambda_1 \concat \Psi(\T')$ with $\lambda_1 = \min \lambda$, which immediately implies $\des[\Psi(\T)] = \des[\Psi(\T')]$, and the desired result follows.
  \item Otherwise, write $\T = \T^{\ell} \oplus \T^r$ for the top-row decomposition of $\T$. By construction we have $\Psi(\T) = \Psi(\T^r) \concat \Psi(\T^{\ell})$. Moreover, we know from the previous part of the proof that the first element of $\Psi(\T^r)$, which is also its smallest element, is strictly greater than all the left-to-right minima of $\Psi(\T^{\ell})$, and thus in particular than its first element. This implies that there is a descent between the last element of $\Psi(\T^r)$ and the first element of $\Psi(\T^{\ell})$, so that $\des[\Psi(\T)] = \des[\Psi(\T^r)] + 1 + \des[\Psi(\T^{\ell})]$. Applying the induction hypothesis to $\T^r$ and $\T^{\ell}$, this yields:
   \begin{align*}
   \ER[\T] & = \ER[\T^r] + \ER[\T^{\ell}] \\
    & = 1 + \des[\Psi(\T^r)] + 1 + \des[\Psi(\T^{\ell})] \\
    & = 1 + \des[\Psi(\T)],
   \end{align*}  
  as desired. This completes the proof of the theorem.
\end{itemize}
\end{proof}


\section{Counting CNATs according to their associated permutations}\label{sec:cnat_count}

In~\cite{CO} the authors were interested in the numbers $\cnat[\pi]$ enumerated in Theorem~\ref{thm:cnat_tutte}. More specifically, they were interested in how many permutations $\pi$ had the same fixed number of associated CNATs. For $n,k \geq 1$, we define
$ B(n, k) := \{\pi \in S_n;\, \cnat[\pi] = k \} $
to be the set of $n$-permutations associated with exactly $k$ CNATs, and $b(n,k) := \vert B(n,k) \vert$ to be the number of such permutations. For brevity we will simply say that a permutation $\pi \in B(n,k)$ \emph{has} $k$ CNATs, and refer to elements of $B(n,k)$ as permutations \emph{with} (exactly) $k$ CNATs. The main goal of this section is to prove a number of conjectures on the enumeration sequence $\big( b(n,k) \big)_{n, k \geq 1}$ from~\cite{CO}.

\subsection{Linking permutations with one and two CNATs}\label{subsec:B(n,1)_B(n+1,2)}

In this part, we establish a bijection between \emph{marked} permutations with a single CNAT, and permutations with $2$ CNATs (Theorem~\ref{thm:Insert_B(n,1)toB(n+1,2)}). We begin by describing the set $B(n,1)$ of permutations with a single CNAT. In~\cite{CO}, the authors gave a characterisation of this set in terms of so-called \emph{L-subsets}. We first reformulate their characterisation in terms of \emph{quadrants}. For an (irreducible) $n$-permutation $\pi$, we let $\Pi = \{ (i, \pi_i); i \in [n] \}$ be the points in the plot of $\pi$. Given an index $k \in \{2, \ldots, n\}$, we partition $\Pi$ into four quadrants, called $k$-quadrants when the index $k$ needs to be explicit, as follows. 
\begin{itemize}
\item The upper-left quadrant is $\Pi_{<k, <k} := \Pi \cap \Big( [1, k-1] \times [1, k-1] \Big)$.
\item The lower-left quadrant is $\Pi_{<k, \geq k} := \Pi \cap \Big( [1, k-1] \times [k, n] \Big)$.
\item The upper-right quadrant is $\Pi_{\geq k, <k} := \Pi \cap \Big( [k, n] \times [1, k-1] \Big)$.
\item The lower-right quadrant is $\Pi_{\geq k, \geq k} := \Pi \cap \Big( [k, n] \times [k, n] \Big)$.
\end{itemize}
This partition is illustrated in Figure~\ref{fig:quad_partition} below, with $\pi = 561243$ and $k=4$.

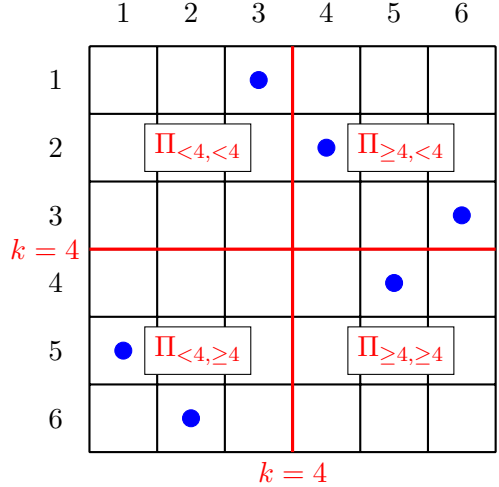
\begin{figure}[ht]
\centering
\begin{tikzpicture}[scale=0.45]
\draw[step=2cm,thick] (0,0) grid (12,-12.01);
\tdot{1}{-9}{blue}
\tdot{3}{-11}{blue}
\tdot{5}{-1}{blue}
\tdot{7}{-3}{blue}
\tdot{9}{-7}{blue}
\tdot{11}{-5}{blue}
\foreach \x in {1,...,6}
	\node at (-1+2*\x,1) {$\x$};
\foreach \x in {1,...,6}
	\node at (-1,1-2*\x) {$\x$};
\node [left] at (0,-6) {$\textcolor{red}{k=4}$};
\node [below] at (6,-12) {$\textcolor{red}{k=4}$};
\draw [red, very thick] (0,-6)--(12,-6);
\draw [red, very thick] (6,0)--(6,-12);
\node [draw, fill=white] at (3.2,-3) {$\textcolor{red}{\Pi_{<4, <4}}$};
\node [draw, fill=white] at (3.2,-9) {$\textcolor{red}{\Pi_{<4, \geq 4}}$};
\node [draw, fill=white] at (9.2,-3) {$\textcolor{red}{\Pi_{\geq 4, <4}}$};
\node [draw, fill=white] at (9.2,-9) {$\textcolor{red}{\Pi_{\geq 4, \geq 4}}$};
\end{tikzpicture}
\caption{The partition of a permutation's plot into four quadrants.\label{fig:quad_partition}}
\end{figure}

Note that since $\pi$ is a permutation, the first $k-1$ columns or rows of its plot must have $k-1$ dots in total. Therefore we have 
\beq\label{eq:quad_eq}
\left\vert \Pi_{<k, < k} \right\vert + \left\vert \Pi_{<k, \geq k} \right\vert = \left\vert \Pi_{<k, < k} \right\vert + \left\vert \Pi_{\geq k, < k} \right\vert = k-1,
\eeq 
so that $\left\vert \Pi_{<k, \geq k} \right\vert = \left\vert \Pi_{ \geq k, < k} \right\vert$. Moreover, this number must be strictly positive, since otherwise $\pi$ would induce a permutation on $[k-1]$, and therefore be reducible. In words, given an (irreducible) $n$-permutation $\pi$, and an index $k \in \{2, \ldots, n\}$, the lower-left and upper-right quadrants are non-empty, and have the same number of dots.

\begin{definition}\label{def:quad_cond}
Let $n \geq 2$. We say that an $n$-permutation $\pi$ satisfies the \emph{quadrant condition} if
\beq\label{eq:quad_cond}
\forall k \in \{2, \ldots, n\}, \ \left\vert \Pi_{<k, \geq k} \right\vert = \left\vert \Pi_{ \geq k, < k} \right\vert = 1.
\eeq
That is, a permutation satisfies the quadrant condition when every lower-left quadrant (equivalently upper-right quadrant) has exactly one dot.
\end{definition}

We now state our characterisation of the set $B(n,1)$.

\begin{proposition}\label{pro:B(n,1)_quadrants}
Let $\pi$ be a permutation. Then $\pi$ has a unique CNAT if, and only if, $\pi$ satisfies the quadrant condition and has no fixed point.
\end{proposition}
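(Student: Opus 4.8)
The plan is to use Theorem~\ref{thm:cnat_minrec}, which tells us that $\cnat[\pi] = \minrec[G_{\pi}]$, together with Lemma~\ref{lem:minrec_tree}, which says $\minrec[G] = 1$ if and only if $G$ is a tree. So the statement $\cnat[\pi] = 1$ is equivalent to: the permutation graph $G_{\pi}$ is a tree. Since $\pi$ is assumed irreducible, $G_{\pi}$ is connected (Proposition~\ref{pro:perm_graph_connected}), so being a tree is equivalent to having exactly $n-1$ edges, i.e.\ $\pi$ having exactly $n-1$ inversions. Thus the whole proposition reduces to a purely combinatorial claim about permutations: \emph{an irreducible $n$-permutation $\pi$ has exactly $n-1$ inversions if and only if it satisfies the quadrant condition and has no fixed point.}

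First I would establish the forward direction (quadrant condition $+$ no fixed point $\Rightarrow$ tree). I would count inversions by slicing: for each $k \in \{2, \dots, n\}$, consider the ``boundary'' between the first $k-1$ rows/columns and the rest. An inversion $(i,j)$ with $i > j$ contributes precisely to those $k$ with $j < k \le i$. A cleaner bookkeeping: an inversion $(i,j)$ (meaning $i>j$, $i \prec_\pi j$) corresponds to a dot of $\Pi_{<k,\ge k}$ (the dot $(c, j)$ where $c$ is $j$'s column, lying in columns $<k$, rows $\ge k$) paired with a dot of $\Pi_{\ge k, < k}$. I would make the following precise observation: the number of inversions of $\pi$ equals $\sum_{k=2}^{n} \big( \text{number of ``new'' inversions crossing level } k \big)$, and under the quadrant condition each lower-left quadrant $\Pi_{<k,\ge k}$ has exactly one dot, which forces the inversion count to be small. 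More directly, I would argue that the quadrant condition says exactly that the single dot in $\Pi_{<k,\ge k}$ and the single dot in $\Pi_{\ge k,<k}$ are ``glued'' in a chain-like structure; combined with no fixed points, this means every $k$ contributes exactly one edge to a spanning structure, giving $n-1$ edges. An alternative and perhaps cleaner route: show directly that the quadrant condition implies $G_{\pi}$ is connected with $n-1$ edges by an explicit induction — peeling off the top-left corner. If $\pi$ has the quadrant condition, the unique dot of $\Pi_{<2, \ge 2}$ is $(1, \pi_1)$ with $\pi_1 \ge 2$, and the unique dot of $\Pi_{\ge 2, <2}$ is $(c, 1)$ for some column $c$; no fixed point at index $1$ means $\pi_1 \neq 1$. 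Removing row/column $1$ (i.e.\ restricting to the pattern on $\{2,\dots,n\}$) should preserve the quadrant condition and no-fixed-point property, shrinking $n$ by one, while the edges incident to vertex $1$ in $G_{\pi}$ are counted to be exactly one (vertex $1$ is a leaf of the tree). Induction then gives $n-1$ edges total.

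For the converse (tree $\Rightarrow$ quadrant condition $+$ no fixed point), I would argue contrapositively. If $\pi$ has a fixed point at index $k$, then (by irreducibility) $1 < k < n$ is impossible to be a ``split'' but $\pi_k = k$ still forces structure: I would show $\pi$ then contains a small pattern whose permutation graph has a cycle, e.g.\ using Proposition~\ref{pro:perm_patterns_cycles} and Lemma~\ref{lem:cnat_pattern}, or directly exhibit a cycle in $G_{\pi}$. Actually the cleanest observation for fixed points: if $\pi_k = k$ with $\pi$ irreducible, then there exist indices $i < k < j$ with $\pi_i > k$ and $\pi_j < k$ (otherwise $\pi$ splits at $k-1$ or $k$), and also indices realizing inversions on both sides of $k$; chasing these gives a cycle through vertex $k$. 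If instead some lower-left quadrant $\Pi_{<k, \ge k}$ has two or more dots, then (since the corresponding upper-right quadrant also has $\ge 2$ dots by \eqref{eq:quad_eq}) one finds two ``crossing'' inversion edges plus a connecting path, yielding a cycle — concretely, two dots in $\Pi_{<k,\ge k}$ at columns $c_1 < c_2$ and rows $r_1, r_2 \ge k$, together with two dots in $\Pi_{\ge k, <k}$, give four vertices $\{r_1, r_2\}$ on the ``high'' side and two on the ``low'' side with enough inversions among them to contain a $321$ or $3412$ pattern, hence a cycle in $G_{\pi}$ by Proposition~\ref{pro:perm_patterns_cycles}.

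The main obstacle I anticipate is the converse direction's case analysis: turning ``some quadrant has $\ge 2$ dots'' into a guaranteed induced cycle requires care, because the extra dots might all be ``aligned'' in a way that doesn't immediately look like a $321$ or $3412$. The resolution should be to combine the $\ge 2$ dots in the lower-left quadrant with the $\ge 2$ forced dots in the upper-right quadrant and a path between the components they create; irreducibility guarantees $G_{\pi}$ is connected, so two independent inversion-edges crossing level $k$ plus a connecting path in the connected graph $G_{\pi}$ must close up into a cycle — and any cycle at all (length $\ge 3$) already gives $\minrec[G_{\pi}] \ge 2$ by Lemma~\ref{lem:minrec_tree}, so I don't even need to identify the cycle length. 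That last point is worth emphasizing: for the converse I only need to produce \emph{a} cycle, which is much easier than classifying it, so I would phrase the converse as ``if the quadrant condition fails or there is a fixed point, then $G_{\pi}$ is not a tree'' and exhibit two edge-disjoint paths between some pair of vertices (or simply an edge plus a disjoint path), which is enough.
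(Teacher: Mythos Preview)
Your approach is genuinely different from the paper's. The paper does not invoke Theorem~\ref{thm:cnat_minrec} or Lemma~\ref{lem:minrec_tree} here at all; instead it quotes an external characterisation of $B(n,1)$ from~\cite{CO} (their Theorem~3.11, phrased via the $L$-subsets $L_j := \Pi_{<j+1,<j+1}\setminus\Pi_{<j,<j}$) and shows by a short counting argument that the $L$-subset condition is equivalent to the quadrant condition, both amounting to $|\Pi_{<k,<k}|=k-2$ for every $k$. Your route trades the dependence on~\cite{CO} for the paper's own ASM machinery, and has the conceptual advantage of making explicit that $B(n,1)$ is exactly the set of $\pi$ with $G_\pi$ a tree.

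Your converse (tree $\Rightarrow$ quadrant $+$ no fixed point) is cleaner than you fear: two dots in $\Pi_{<k,\ge k}$ together with the two forced dots in $\Pi_{\ge k,<k}$ span a $K_{2,2}$ in $G_\pi$, hence a $4$-cycle; a fixed point $\pi_k=k$ plus irreducibility yields $i<k<j$ with $\pi_i>k>\pi_j$, hence a $321$ through $k$ and a $3$-cycle. No further case analysis is needed.

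The gap is in your forward direction. The ``peel off row/column $1$'' induction does not work as stated: in $\pi=231$ the value $1$ sits in column $3$, so vertex $1$ has degree $2$ in $G_\pi$ and is not a leaf; dually in $\pi=312$ the vertex $\pi_1=3$ has degree $2$. There is no canonical corner leaf. The clean fix is to do this direction by contrapositive as well, via Proposition~\ref{pro:perm_patterns_cycles}: a shortest cycle in $G_\pi$ is induced of length $3$ or $4$, giving a $321$ or $3412$ pattern. A $321$ at positions $a<b<c$ yields either $\pi_b=b$ (fixed point), or $\pi_b>b$ (then $(a,\pi_a),(b,\pi_b)\in\Pi_{<b+1,\ge b+1}$), or $\pi_b<b$ (then $(b,\pi_b),(c,\pi_c)\in\Pi_{\ge\pi_b+1,<\pi_b+1}$). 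A $3412$ at $a<b<c<d$ with $\pi_c<\pi_d<\pi_a<\pi_b$ must satisfy $b<\pi_a$ or $\pi_d<c$ (otherwise $b\ge\pi_a>\pi_d\ge c>b$), and either inequality puts two dots into one off-diagonal quadrant. This closes the argument.
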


\begin{proof}
Let $\pi$ be an $n$-permutation. For $j \in [n]$, we define the $j$-th \emph{L-subset} $L_j$ of a permutation $\pi$ by $L_j := \Pi_{<j+1, <j+1} \setminus \Pi_{<j, <j}$. Therorem~3.11 in~\cite{CO} states that a permutation has a unique CNAT if, and only if, $\pi$ has no fixed point, $\vert L_1 \vert = 0$, $\vert L_n \vert = 2$, and $\vert L_j \vert = 1$ for all $j \in \{2, \ldots, n-1 \}$. We show that this is equivalent to $\pi$ having no fixed point and satisfying the quadrant condition.

Suppose first that $\pi$ satisfies the above conditions on its L-subsets, and fix some $k \in \{2,\ldots,n\}$. Then we have $ \left\vert \Pi_{<k,<k} \right\vert = \sum\limits_{j=1}^{k-1} \vert L_j \vert = k-2$. By Equation~\eqref{eq:quad_eq} this implies that $\left\vert \Pi_{<k, \geq k} \right\vert = \left\vert \Pi_{\geq k, < k} \right\vert = k-1 - \left\vert \Pi_{<k, < k} \right\vert = 1$. Thus $\pi$ satisfies the quadrant condition as desired.

Conversely, suppose that $\pi$ satisfies the quadrant condition, and has no fixed point. Owing to the fact that $\pi$ is irreducible, we have $\vert L_1\vert =0$ (we cannot have $\pi_1 = 1$). Moreover there is always exactly one dot in the rightmost column $n$ and exactly one dot in the bottom-most row $n$, which both belong to the L-subset $L_n$. Again, $\pi$ is irreducible, so these dots must be distinct (we cannot have $\pi_n = n$), leading to the equality $\vert L_n\vert=2$. It therefore remains to show that
\beq\label{eq:L-condition}
\forall j \in \{2, \ldots, n-1\}, \, \vert L_j \vert = 1.
\eeq

Seeking contradiction, suppose that this is not the case. Because $\pi$ is a permutation, there are $n$ dots in total in $\Pi$, and therefore $(n-2)$ dots in $\bigcup\limits_{j = 2}^{n-1} L_j$. Thus if the L-condition~\eqref{eq:L-condition} is not satisfied, there must exist $j \in \{2,\ldots,n-1\}$ such that the $j$-th L-subset $L_j$ is empty. In that case, the dot in the column $j$ will be placed below row $j$, i.e.\ in column $j$ of the lower-left quadrant $\Pi_{<j+1, \geq j+1}$. Now since $L_j$ is empty, each dot located in the first $(j-1)$ columns should be placed either in the upper-left quadrant $\Pi_{<j, <j}$ or the lower-left quadrant $\Pi_{<j+1, \geq j+1}$ but before the column $j$. Because $\pi$ is irreducible, there must be at most $(j-2)$ dots in the upper-left quadrant $\Pi_{<j, <j}$, which means that there must be at least one dot placed in the first $(j-1)$ columns of the lower-left quadrant $\Pi_{<j+1, \geq j+1}$, say in some column $j' < j$. Hence, in the lower-left quadrant $\Pi_{<j+1, \geq j+1}$ there are at least two dots: one in column $j'$ and one in column $j$. This contradicts the fact that $\pi$ satisfies the quadrant condition. Therefore, the L-condition~\eqref{eq:L-condition} is satisfied, and $\pi$ has a unique CNAT by~\cite[Theorem~3.11]{CO}.
\end{proof}

For $n \geq 1$, and an index $j \in \{2,\ldots,n\}$, we now define the \emph{insertion operation} $\Insert[j] : S_n \rightarrow S_{n+1}$ as follows. Given a $n$-permutation $\pi$, we construct $\pi' := \Insert[j](\pi)$ by inserting a fixed point $j$ into the $n$-permutation $\pi$, and increasing the labels of all letters $k \geq j$ in the original permutation $\pi$ by $1$. For example, if $\pi = 521634$ and $j=3$, then $\pi' = 62\textbf{3}1745$ (fixed point in bold). It is straightforward to check that $\Insert[j](\pi)$ is irreducible if, and only if, $\pi$ is irreducible.

\begin{theorem}\label{thm:Insert_B(n,1)toB(n+1,2)}
Let $n \geq 2$. Define a map $\Phi : \{2,\ldots,n\} \times S_{n} \rightarrow S_{n+1}$ by $\Phi(j, \pi) := \Insert[j](\pi)$ for all $j \in \{2,\ldots,n\}$ and $\pi \in S_{n}$. Then $\Phi$ is a bijection from $\{2,\ldots,n\} \times B(n,1)$ to $B(n+1,2)$.
\end{theorem}

\begin{remark}\label{rem:marked_B(n,1)toB(n+1,2)}
The insertion index $j \in \{2, \ldots, n\}$ can be thought of as \emph{marking} the ``space'' between $\pi_{j-1}$ and $\pi_j$ in $\pi$, with the insertion operation corresponding to inserting a fixed point in the marked space. The product set $\{2,\ldots,n\} \times B(n,1)$ can then be thought of as permutations with a single CNAT, marked in such a space. Hence the bijection of Theorem~\ref{thm:Insert_B(n,1)toB(n+1,2)} can be interpreted as a bijection between marked $n$-permutations with a single CNAT, and $(n+1)$-permutations with two CNATs.
\end{remark}

From~\cite[Corollary~3.12]{CO}, we have $b(n,1) = 2^{n-2}$ for any $n \geq 2$. This formula is also implicit from the combination of Theorem~\ref{thm:cnat_tutte} and \cite[Theorem~1]{AH}. Indeed, in that work the authors show that the number of $n$-permutations whose graphs are trees is $2^{n-2}$, and trees are the only simple graphs satisfying $a_G = 1$ (Lemma~\ref{lem:tutte_tree}). Together with Theorem~\ref{thm:Insert_B(n,1)toB(n+1,2)}, this enumeration formula immediately implies the following, which answers in the affirmative the conjecture from~\cite{CO} that $\big( b(n,2) \big)_{n \geq 2}$ is given by Sequence~A001787 in~\cite{OEIS}.

\begin{corollary}\label{cor:b(n,2)}
For any $n\geq2$, we have $b(n,2) = (n-2)\cdot 2^{(n-3)}$.
\end{corollary}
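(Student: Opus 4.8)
The plan is to obtain this as an immediate counting consequence of Theorem~\ref{thm:Insert_B(n,1)toB(n+1,2)}, combined with the enumeration $b(n,1) = 2^{n-2}$ (valid for $n \geq 2$), which is quoted above from~\cite[Corollary~3.12]{CO} and which also follows from the combination of Theorem~\ref{thm:cnat_minrec}, Lemma~\ref{lem:minrec_tree}, and~\cite[Theorem~1]{AH}. First I would fix $n \geq 2$ and note that, since $\Phi$ is a bijection from $\{2,\cdots,n\} \times B(n,1)$ onto $B(n+1,2)$, taking cardinalities gives
\[
b(n+1,2) = \left\vert \{2,\cdots,n\} \right\vert \cdot b(n,1) = (n-1)\cdot 2^{n-2}.
\]
Re-indexing by setting $m := n+1$, so that $m$ ranges over all integers $\geq 3$, this reads $b(m,2) = (m-2)\cdot 2^{m-3}$, which is precisely the claimed formula.

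It then only remains to handle the base case $m = 2$ directly. Here $S_2 = \{12, 21\}$: the permutation $12$ is reducible, hence associated to no CNAT (recall that any permutation associated with a CNAT is irreducible), while $21 = \dec[2]$ is the decreasing permutation, whose permutation graph is $K_2$ and for which $\cnat[{\dec[2]}] = 1! = 1$ by the discussion at the start of Section~\ref{sec:upper-diag}. Hence $B(2,2) = \emptyset$ and $b(2,2) = 0$, in agreement with $(2-2)\cdot 2^{-1}$, completing the argument.

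I do not expect any real obstacle here: every substantive ingredient (the quadrant-condition characterisations of $B(n,1)$ and $B(n+1,2)$, and the construction and verification of the bijection $\Phi$) has already been established in Section~\ref{subsec:B(n,1)_B(n+1,2)}, so the corollary is pure bookkeeping on top of Theorem~\ref{thm:Insert_B(n,1)toB(n+1,2)}. The only points that warrant a moment's care are the index shift $n \mapsto n+1$ (so that the conjectured sequence A001787 is matched with the correct offset) and the verification of the degenerate value $n = 2$ by hand.
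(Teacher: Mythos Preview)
Your argument is correct and is exactly the approach the paper takes: it deduces the corollary immediately from Theorem~\ref{thm:Insert_B(n,1)toB(n+1,2)} together with the known enumeration $b(n,1)=2^{n-2}$. Your explicit treatment of the boundary case $n=2$ is a small addition that the paper leaves implicit.
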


To prove Theorem~\ref{thm:Insert_B(n,1)toB(n+1,2)}, we first state and prove two lemmas on the insertion operation $\Insert[j]$ and the set $B(n,2)$.

\begin{lemma}\label{lem:k_Insert_2k}
Let $n \geq 2$, and $j \in \{2,\ldots,n\}$. For any $k \geq 1$, if $\pi$ is an $n$-permutation with $k$ CNATs, then $\Insert[j](\pi)$ is an $(n+1)$-permutation with at least $2k$ CNATs. Moreover, if $\pi$ has a single CNAT (i.e. $k=1$), then $\Insert[j](\pi)$ has exactly $2$ CNATs.
\end{lemma}

\begin{proof}
Let $T \in \CNAT[\pi]$ be a CNAT with permutation $\pi$. Define $T'$ to be the ``partial'' CNAT obtained by inserting a new dot $d$ in cell $(j,j)$ and shifting all dots in cells $(x,y)$ with $x \geq j$, resp.\ $y \geq j$, one column rightwards, resp.\ one row downwards (see Figure~\ref{fig:partial_cnat}). Since the lower-left and upper-right $j$-quadrants of $\pi$ are non-empty, $T'$ must have at least one leaf dot $d_1$ below and to the left of $d$, and one leaf dot $d_2$ above and to the right of $d$. The path from $d_1$, resp.\ $d_2$, to the root must cross row $j$, resp.\ column $j$, in some column $j_1 < j$, resp.\ row $j_2 < j$. Let $T_1$, resp.\ $T_2$, be $T'$ with an extra dot in the cell $(j_1,j)$ (i.e.\ in row $j$ and column $j_1$), resp.\ in the cell $(j, j_2)$. Then $T_1$ and $T_2$ are two CNATs with permutation $\pi' := \Insert[j](\pi)$, and the maps $T \mapsto T_1$ and $T \mapsto T_2$ are both injective. This shows that if $\pi$ has $k$ CNATs, then $\pi'$ has at least $2k$ CNATs, as desired. 

The above construction is illustrated in Figure~\ref{fig:insert_2k} below. Here we start with a CNAT $T$ and associated permutation $\pi$ (Figure~\ref{fig:cnat_insert_2k}). We then show in Figures~\ref{fig:T1} and \ref{fig:T2} possible choices for the two CNATs $T_1$ and $T_2$ with permutation $\pi' = \Insert[4](\pi)$. Note that here in each case we actually had two choices for the new edge in the CNATs $T_1$ and $T_2$, since there were two edges of $T$ crossing row $4$ to the left of the new leaf dot (in columns $1$ and $2$), and two edges crossing column $4$ above the new leaf dot (in rows $2$ and $3$). 

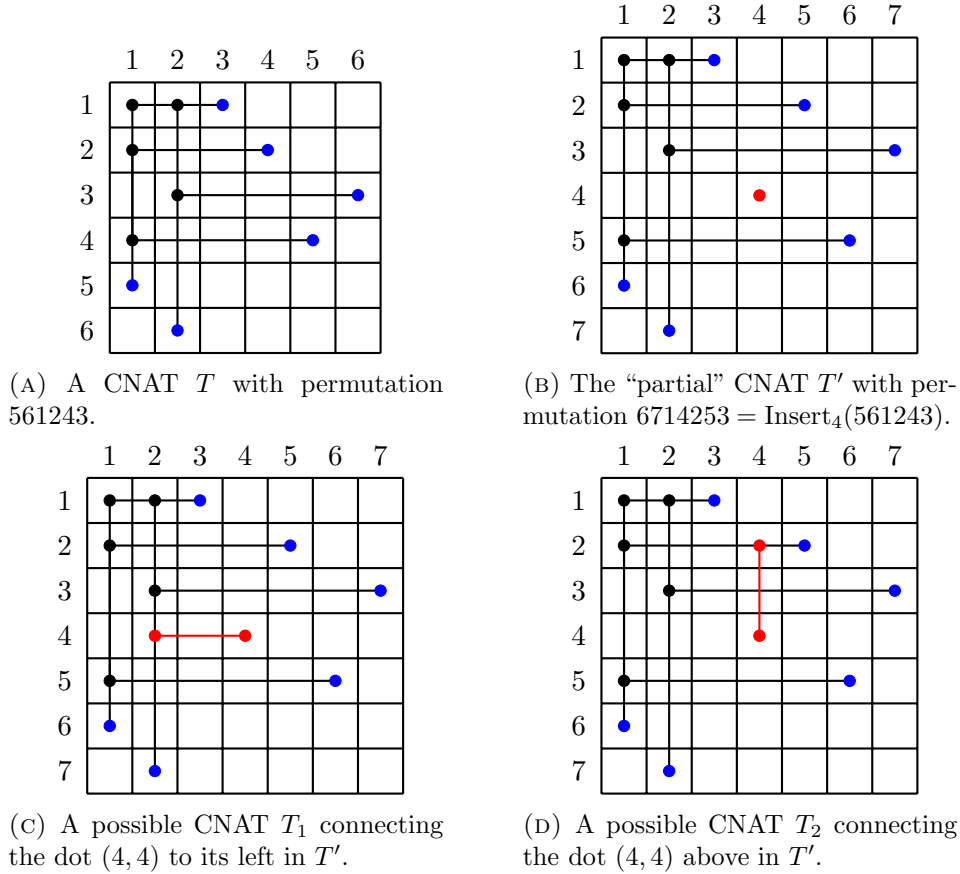
\begin{figure}[ht]
\centering
  \begin{subfigure}[b]{0.35\textwidth}
    \centering
    \begin{tikzpicture}[scale=0.3]
    \draw[step=2cm,thick] (0,0) grid (12,-12.01);
    \draw[thick] (1,-9)--(1,-1)--(5,-1);
    \draw[thick] (3,-1)--(3,-11);
    \draw[thick] (7,-3)--(1,-3)--(1,-7)--(9,-7);
    \draw[thick] (3,-5)--(11,-5);
    \tdot{1}{-9}{blue}
    \tdot{3}{-11}{blue}
    \tdot{5}{-1}{blue}
    \tdot{7}{-3}{blue}
    \tdot{9}{-7}{blue}
    \tdot{11}{-5}{blue}
    \tdot{1}{-1}{black}
    \tdot{1}{-3}{black}
    \tdot{1}{-7}{black}
    \tdot{3}{-1}{black}
    \tdot{3}{-5}{black}
    \foreach \x in {1,...,6}
      \node at (-1+2*\x,1) {$\x$};
    \foreach \x in {1,...,6}
      \node at (-1,1-2*\x) {$\x$};
    \end{tikzpicture}
  \caption{A CNAT $T$ with permutation $561243$.\label{fig:cnat_insert_2k}}
  \end{subfigure}
  \hspace{0.8cm}
  \begin{subfigure}[b]{0.35\textwidth}
    \centering
    \begin{tikzpicture}[scale=0.3]
    \draw[step=2cm,thick] (0,0) grid (14,-14.01);
    \draw[thick] (1,-11)--(1,-1)--(5,-1);
    \draw[thick] (3,-1)--(3,-13);
    \draw[thick] (9,-3)--(1,-3)--(1,-9)--(11,-9);
    \draw[thick] (3,-5)--(13,-5);
    \tdot{1}{-11}{blue}
    \tdot{3}{-13}{blue}
    \tdot{5}{-1}{blue}
    \tdot{9}{-3}{blue}
    \tdot{11}{-9}{blue}
    \tdot{13}{-5}{blue}
    \tdot{7}{-7}{red}
    \tdot{1}{-1}{black}
    \tdot{1}{-3}{black}
    \tdot{1}{-9}{black}
    \tdot{3}{-1}{black}
    \tdot{3}{-5}{black}
    \foreach \x in {1,...,7}
      \node at (-1+2*\x,1) {$\x$};
    \foreach \x in {1,...,7}
      \node at (-1,1-2*\x) {$\x$};
    \end{tikzpicture}
  \caption{The ``partial'' CNAT $T'$ with permutation $6714253 = \Insert[4](561243)$.\label{fig:partial_cnat}}
  \end{subfigure}
  
  \begin{subfigure}[b]{0.35\textwidth}
    \centering
    \begin{tikzpicture}[scale=0.3]
    \draw[step=2cm,thick] (0,0) grid (14,-14.01);
    \draw[thick] (1,-11)--(1,-1)--(5,-1);
    \draw[thick] (3,-1)--(3,-13);
    \draw[thick] (9,-3)--(1,-3)--(1,-9)--(11,-9);
    \draw[thick] (3,-5)--(13,-5);
    \draw[thick, red] (7,-7)--(3,-7);
    \tdot{1}{-11}{blue}
    \tdot{3}{-13}{blue}
    \tdot{5}{-1}{blue}
    \tdot{9}{-3}{blue}
    \tdot{11}{-9}{blue}
    \tdot{13}{-5}{blue}
    \tdot{7}{-7}{red}
    \tdot{1}{-1}{black}
    \tdot{1}{-3}{black}
    \tdot{1}{-9}{black}
    \tdot{3}{-1}{black}
    \tdot{3}{-5}{black}
    \tdot{3}{-7}{red}
    \foreach \x in {1,...,7}
      \node at (-1+2*\x,1) {$\x$};
    \foreach \x in {1,...,7}
      \node at (-1,1-2*\x) {$\x$};
    \end{tikzpicture}
  \caption{A possible CNAT $T_1$ connecting the dot $(4,4)$ to its left in $T'$.\label{fig:T1}}
  \end{subfigure}
  \hspace{0.8cm}
  \begin{subfigure}[b]{0.35\textwidth}
    \centering
    \begin{tikzpicture}[scale=0.3]
    \draw[step=2cm,thick] (0,0) grid (14,-14.01);
    \draw[thick] (1,-11)--(1,-1)--(5,-1);
    \draw[thick] (3,-1)--(3,-13);
    \draw[thick] (9,-3)--(1,-3)--(1,-9)--(11,-9);
    \draw[thick] (3,-5)--(13,-5);
    \draw[thick, red] (7,-7)--(7,-3);
    \tdot{1}{-11}{blue}
    \tdot{3}{-13}{blue}
    \tdot{5}{-1}{blue}
    \tdot{9}{-3}{blue}
    \tdot{11}{-9}{blue}
    \tdot{13}{-5}{blue}
    \tdot{7}{-7}{red}
    \tdot{1}{-1}{black}
    \tdot{1}{-3}{black}
    \tdot{1}{-9}{black}
    \tdot{3}{-1}{black}
    \tdot{3}{-5}{black}
    \tdot{7}{-3}{red}
    \foreach \x in {1,...,7}
      \node at (-1+2*\x,1) {$\x$};
    \foreach \x in {1,...,7}
      \node at (-1,1-2*\x) {$\x$};
    \end{tikzpicture}
  \caption{A possible CNAT $T_2$ connecting the dot $(4,4)$ above in $T'$.\label{fig:T2}}
  \end{subfigure}

\caption{Illustrating how inserting a fixed point into a permutation with $k$ CNATs gives a permutation with at least $2k$ CNATs.\label{fig:insert_2k}}
\end{figure}

Now suppose that $k=1$, i.e.\ that $T$ is the unique CNAT associated with the permutation $\pi$. Let $T'$ be a CNAT associated with $\pi'$. $T'$ has a leaf dot $d$ in cell $(j,j)$ by definition. Moreover, in $T'$, there must be either a dot $d_1$ in a cell $(j_1, j)$ for some $j_1 < j$ (to the left of $d$), or a dot $d_2$ in a cell $(j, j_2)$ for some $j_2 < j$ (above $d$).

First, consider the case where there is a dot $d_1$ to the left of $d$. Any such dot $d_1$ must have a leaf dot below it since the tree $T'$ is complete. By Proposition~\ref{pro:B(n,1)_quadrants}, this leaf dot must be the unique dot in the lower-left quadrant $\Pi'_{<j+1, \geq j+1}$ of $\pi'$ (equivalently, the unique dot in the lower-left quadrant $\Pi_{<j, \geq j}$ of $\pi$). This implies that the dot $d_1$ is unique, i.e.\ that there is only one dot to the left of $d$ in its row. Since $T$ is unique, this means that there is only one CNAT $T'$ with permutation $\pi'$ such that the leaf dot $d$ in cell $(j,j)$ has a dot to its left. Analogously, there is only one CNAT $T'$ with permutation $\pi'$ such that the leaf dot $d$ in cell $(j,j)$ has a dot above it. Since every CNAT $T'$ must be in one of these two cases by definition, this implies that $\cnat[\pi'] = 2$, as desired. 
\end{proof}

\begin{lemma}\label{lem:B(n,2)_quad_cond}
For $n \geq 2$, let $\pi \in B(n,2)$ be an $n$-permutation with exactly $2$ CNATs. Then $\pi$ satisfies the quadrant condition from Definition~\ref{def:quad_cond}.
\end{lemma}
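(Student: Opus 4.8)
The plan is to argue by contraposition: I will show that if $\pi$ is an irreducible $n$-permutation that fails the quadrant condition, then $\cnat[\pi] \neq 2$, so it cannot lie in $B(n,2)$. By Equation~\eqref{eq:quad_eq} and the irreducibility of $\pi$, for every $k \in \{2,\dots,n\}$ we have $\left\vert \Pi_{<k,\geq k}\right\vert = \left\vert \Pi_{\geq k, <k}\right\vert \geq 1$, so failing the quadrant condition means there is some index $k$ with $\left\vert \Pi_{<k,\geq k}\right\vert \geq 2$. My goal is to translate this into structural information about the permutation graph $G_\pi$ and then apply the ASM machinery (Theorem~\ref{thm:cnat_minrec}) together with the monotonicity lemmas (Lemmas~\ref{lem:add_edge_minrec}, \ref{lem:dupl_edge_minrec}, \ref{lem:cnat_pattern}) to force $\cnat[\pi] \geq 3$.

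The key observation is that a "bad" index $k$ produces a forbidden pattern. Suppose $\left\vert \Pi_{<k,\geq k}\right\vert \geq 2$: then there are columns $i_1 < i_2 < k$ with $\pi_{i_1}, \pi_{i_2} \geq k$, and (since the upper-right quadrant has equal size $\geq 2$) there are columns $k \leq j_1 < j_2$ with $\pi_{j_1}, \pi_{j_2} < k$. I expect that by choosing these four indices carefully one obtains an occurrence of either the pattern $321$ twice "independently" or a richer pattern whose permutation graph has $\minrec \geq 3$ — the natural candidates being $3412$ (whose graph is $C_4$, giving $\minrec = 3$ by Lemma~\ref{lem:minrec_dec_cycle}), or a $4$-element pattern containing two edge-disjoint triangles in its graph, or the pattern $4231$ / $3421$ / $4312$ whose graphs are easily checked to have more than $2$ minimal recurrent configurations. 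Concretely: among $\pi_{i_1}, \pi_{i_2}$ (both $\geq k$, occurring in the first $k-1$ columns) and $\pi_{j_1}, \pi_{j_2}$ (both $< k$, occurring in columns $\geq k$), every pair consisting of one "high" and one "low" value is an inversion, and the relative order of the two high values and of the two low values gives at least the pattern $2143$ or $3412$ restricted to these four positions — but since the two high values exceed the two low values and appear strictly to their left, the only possibilities for the order type are $3412$ and $4312$ and $3421$ and $4321$, all of whose permutation graphs strictly contain $C_4$ or $K_4$-minus-an-edge. In each of these cases $\minrec[G_\tau] \geq 3$: one computes this directly for the small graphs (e.g.\ $C_4$ gives $3$ by Lemma~\ref{lem:minrec_dec_cycle}, and any graph obtained from $C_4$ by adding edges gives strictly more by Lemma~\ref{lem:add_edge_minrec}). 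Then Lemma~\ref{lem:cnat_pattern} yields $\cnat[\pi] \geq \cnat[\tau] = \minrec[G_\tau] \geq 3$, so $\pi \notin B(n,2)$.

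I would organize the write-up as: (1) recall from Equation~\eqref{eq:quad_eq} that each lower-left/upper-right quadrant is nonempty, so a failure of the quadrant condition produces $k$ with $\left\vert \Pi_{<k,\geq k}\right\vert \geq 2$; (2) extract four indices $i_1<i_2<k\le j_1<j_2$ with $\pi_{i_1},\pi_{i_2}\ge k > \pi_{j_1},\pi_{j_2}$; (3) observe the induced sub-permutation on these four positions is one of $3412, 4312, 3421, 4321$; (4) check in each case that the corresponding permutation graph is a supergraph of $C_4$ and hence $\minrec \geq 3$; (5) conclude via Lemma~\ref{lem:cnat_pattern} and Theorem~\ref{thm:cnat_minrec} that $\cnat[\pi]\ge 3$, contradicting $\pi\in B(n,2)$.

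The main obstacle I anticipate is step (3)–(4): making sure the four chosen indices genuinely realize a pattern whose graph has enough edges, and doing the small-graph $\minrec$ computations cleanly. One subtlety is that $i_1,i_2$ might not be adjacent to each other in $G_\pi$ (if $\pi_{i_1}<\pi_{i_2}$, the pair $(i_1,i_2)$ is not an inversion) and similarly for $j_1,j_2$; but the four cross-edges between $\{i_1,i_2\}$ and $\{j_1,j_2\}$ are always present, and that alone gives an induced $C_4$, which already suffices for $\minrec \geq 3$ via Lemmas~\ref{lem:minrec_dec_cycle} and \ref{lem:tree_pruning_minrec} applied to the induced subgraph (using that adding the possible chord $\{i_1,i_2\}$ or $\{j_1,j_2\}$ only increases $\minrec$). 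So in fact the cleanest route avoids case analysis on the pattern entirely: the four cross-inversions alone exhibit an induced $4$-cycle in $G_\pi$, hence by Lemma~\ref{lem:cnat_pattern} (or directly by the pruning/edge-addition lemmas applied to a spanning tree plus these four edges) $\cnat[\pi] \geq \minrec[C_4] = 3$, so $\pi$ has at least three CNATs and cannot belong to $B(n,2)$.
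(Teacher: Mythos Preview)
Your argument is correct and takes a genuinely different route from the paper's proof. The paper argues directly at the level of CNATs: given two dots in each of the lower-left and upper-right $k$-quadrants, it deletes one dot from each side, builds a CNAT $T$ for the reduced permutation, reinserts the two deleted leaves, and then exhibits three cells $c, c_{\mathrm{left}}, c_{\mathrm{up}}$ such that placing internal dots in any two of them completes $T$ to a CNAT for $\pi$. This yields three distinct CNATs and hence $\cnat[\pi] \geq 3$. The construction is elementary but carries a technical wrinkle (acknowledged in Remark~\ref{rem:B(n,2)_quad_cond}): deleting the two dots can make the reduced permutation reducible, and one must then delete further dots to restore irreducibility before the CNAT $T$ exists.

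Your approach sidesteps that wrinkle entirely. From the two dots in each off-diagonal quadrant you extract positions $i_1 < i_2 < k \le j_1 < j_2$ with $\pi_{i_1}, \pi_{i_2} \geq k > \pi_{j_1}, \pi_{j_2}$; the induced pattern on these four positions is one of $3412$, $3421$, $4312$, $4321$, each of whose permutation graphs contains $C_4$ (possibly with chords), hence has $\minrec \geq 3$ by Lemmas~\ref{lem:minrec_dec_cycle} and~\ref{lem:add_edge_minrec}. Lemma~\ref{lem:cnat_pattern} then gives $\cnat[\pi] \geq 3$. This is shorter and cleaner, at the cost of invoking the ASM bridge (Theorem~\ref{thm:cnat_minrec}) through Lemma~\ref{lem:cnat_pattern}, whereas the paper's proof of this particular lemma stays within the world of CNATs. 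One small phrasing issue: the four cross-inversions need not give an \emph{induced} $4$-cycle (there may be chords), but you already handle this correctly by noting that chords only increase $\minrec$, or equivalently by applying Lemma~\ref{lem:cnat_pattern} to whichever of the four patterns actually occurs.
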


\begin{proof}
Let $\pi$ be an $n$-permutation, and $k \in \{2, \ldots, n\}$ an index. We show that if the lower-left and upper-right $k$-quadrants have at least $2$ points, then we can construct at least $3$ CNATs with associated permutation $\pi$. The construction is similar to the one in the proof of Lemma~\ref{lem:k_Insert_2k}, so we allow ourselves to be a little briefer here.

Let $d_1, d_1'$, resp.\ $d_2, d_2'$, be two dots in the lower-left, resp.\ upper-right, quadrant. We assume that the column of $d_1$ is to the left of that of $d_1'$, and the row of $d_2$ above that of $d_2'$. Let $\pi'$ be the permutation obtained from $\pi$ by removing the rows and columns containing $d_1'$ and $d_2'$ from its plot. Let $T$ be a CNAT with permutation $\pi'$\footnote{See the proof of Lemma~\ref{lem:B(n,2)_quad_cond} for why the assumption that $T$ exists can be justified.}. As in the proof of Lemma~\ref{lem:k_Insert_2k}, we let $T'$ be the ``partial'' CNAT obtained from $T$ by re-inserting the leaf dots $d_1'$ and $d_2'$.

Let $c = (i', j')$ denote the cell in the same column $i'$ as $d_1'$ and row $j'$ as $d_2'$. By construction this cell is in the upper-left $k$-quadrant of $\pi$. Since $d_1$ is in some column $i < i'$ (to the left of $d_1'$), the path from $d_1$ to the root in the partial CNAT $T'$ must cross the row $j'$ in some cell $c_{\mathrm{left}}$ to the left of $c$. Similarly, since the row $j$ of $d_2$ is above the row $j'$ of $d_2'$, the path from $d_2$ to the root must cross the column $i'$ in some cell $c_{\mathrm{up}}$ above $c$. Then putting internal dots in any two of the three cells $c, c_{\mathrm{left}}, c_{\mathrm{up}}$ will yield a CNAT with permutation $\pi$. Since there are three such possibilities, this implies that $\pi$ has at least three CNATs, which completes the proof. These constructions are illustrated in Figure~\ref{fig:B(n,2)_quad_cond} below.

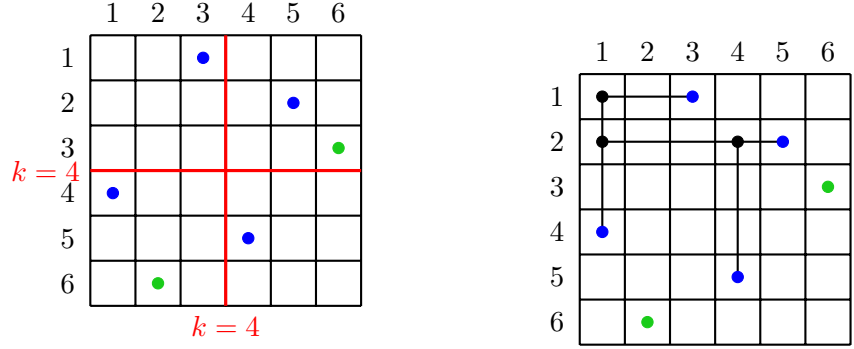
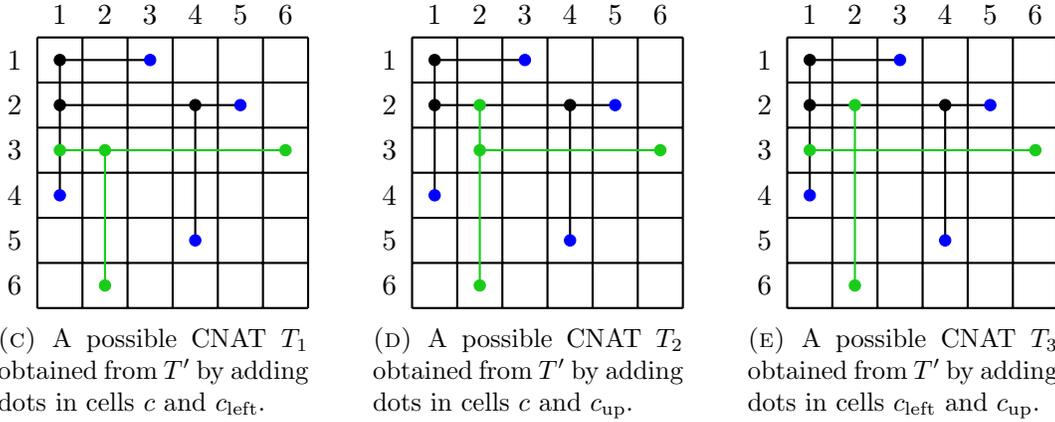
\begin{figure}[ht]
\centering
  \begin{subfigure}[b]{0.35\textwidth}
    \centering
    \begin{tikzpicture}[scale=0.3]
    \draw[step=2cm,thick] (0,0) grid (12,-12.01);
    \tdot{1}{-7}{blue}
    \tdot{3}{-11}{mygreen}
    \tdot{5}{-1}{blue}
    \tdot{7}{-9}{blue}
    \tdot{9}{-3}{blue}
    \tdot{11}{-5}{mygreen}
    \foreach \x in {1,...,6}
    	\node at (-1+2*\x,1) {$\x$};
    \foreach \x in {1,...,6}
    	\node at (-1,1-2*\x) {$\x$};
    \node [left] at (0,-6) {$\textcolor{red}{k=4}$};
    \node [below] at (6,-12) {$\textcolor{red}{k=4}$};
    \draw [red, very thick] (0,-6)--(12,-6);
    \draw [red, very thick] (6,0)--(6,-12);
    \end{tikzpicture}
  \caption{The permutation $\pi = 461523$. The lower-left and upper-right $4$-quadrants have $2$ dots.\label{fig:perm_not_quad_cond}}
  \end{subfigure}
  \hspace{0.8cm}
  \begin{subfigure}[b]{0.35\textwidth}
    \centering
    \begin{tikzpicture}[scale=0.3]
    \draw[step=2cm,thick] (0,0) grid (12,-12.01);
    \draw[thick] (1,-7)--(1,-1)--(5,-1);
    \draw[thick] (7,-9)--(7,-3)--(9,-3);
    \draw[thick] (7,-3)--(1,-3);
    \tdot{1}{-7}{blue}
    \tdot{3}{-11}{mygreen}
    \tdot{5}{-1}{blue}
    \tdot{7}{-9}{blue}
    \tdot{9}{-3}{blue}
    \tdot{11}{-5}{mygreen}
    \tdot{1}{-1}{black}
    \tdot{1}{-3}{black}
    \tdot{7}{-3}{black}
    \foreach \x in {1,...,6}
      \node at (-1+2*\x,1) {$\x$};
    \foreach \x in {1,...,6}
      \node at (-1,1-2*\x) {$\x$};
    \end{tikzpicture}
  \caption{A possible ``partial'' CNAT $T'$ with non-connected dots $d_1'$ in cell $(2,6)$ and $d_2'$ in cell $(6,3)$.\label{fig:partial_not_quad_cond}}
  \end{subfigure}
  
  \hspace{0.2cm}
  
  \begin{subfigure}[b]{0.25\textwidth}
    \centering
    \begin{tikzpicture}[scale=0.3]
    \draw[step=2cm,thick] (0,0) grid (12,-12.01);
    \draw[thick] (1,-7)--(1,-1)--(5,-1);
    \draw[thick] (7,-9)--(7,-3)--(9,-3);
    \draw[thick] (7,-3)--(1,-3);
    \draw[thick, mygreen] (3,-11)--(3,-5)--(11,-5);
    \draw[thick, mygreen] (1,-5)--(3,-5);
   \tdot{1}{-7}{blue}
    \tdot{3}{-11}{mygreen}
    \tdot{5}{-1}{blue}
    \tdot{7}{-9}{blue}
    \tdot{9}{-3}{blue}
    \tdot{11}{-5}{mygreen}
    \tdot{1}{-1}{black}
    \tdot{1}{-3}{black}
    \tdot{7}{-3}{black}
    \tdot{1}{-5}{mygreen}
    \tdot{3}{-5}{mygreen}
    \foreach \x in {1,...,6}
      \node at (-1+2*\x,1) {$\x$};
    \foreach \x in {1,...,6}
      \node at (-1,1-2*\x) {$\x$};
    \end{tikzpicture}
  \caption{A possible CNAT $T_1$ obtained from $T'$ by adding dots in cells $c$ and $c_{\mathrm{left}}$.\label{fig:d_dl}}
  \end{subfigure}
  \hspace{0.6cm}
  \begin{subfigure}[b]{0.25\textwidth}
    \centering
    \begin{tikzpicture}[scale=0.3]
    \draw[step=2cm,thick] (0,0) grid (12,-12.01);
    \draw[thick] (1,-7)--(1,-1)--(5,-1);
    \draw[thick] (7,-9)--(7,-3)--(9,-3);
    \draw[thick] (7,-3)--(1,-3);
    \draw[thick, mygreen] (3,-11)--(3,-5)--(11,-5);
    \draw[thick, mygreen] (3,-3)--(3,-5);
   \tdot{1}{-7}{blue}
    \tdot{3}{-11}{mygreen}
    \tdot{5}{-1}{blue}
    \tdot{7}{-9}{blue}
    \tdot{9}{-3}{blue}
    \tdot{11}{-5}{mygreen}
    \tdot{1}{-1}{black}
    \tdot{1}{-3}{black}
    \tdot{7}{-3}{black}
    \tdot{3}{-3}{mygreen}
    \tdot{3}{-5}{mygreen}
    \foreach \x in {1,...,6}
      \node at (-1+2*\x,1) {$\x$};
    \foreach \x in {1,...,6}
      \node at (-1,1-2*\x) {$\x$};
    \end{tikzpicture}
  \caption{A possible CNAT $T_2$ obtained from $T'$ by adding dots in cells $c$ and $c_{\mathrm{up}}$.\label{fig:d_du}}
  \end{subfigure}
  \hspace{0.6cm}
  \begin{subfigure}[b]{0.25\textwidth}
    \centering
    \begin{tikzpicture}[scale=0.3]
    \draw[step=2cm,thick] (0,0) grid (12,-12.01);
    \draw[thick] (1,-7)--(1,-1)--(5,-1);
    \draw[thick] (7,-9)--(7,-3)--(9,-3);
    \draw[thick] (7,-3)--(1,-3);
    \draw[thick, mygreen] (3,-11)--(3,-3);
    \draw[thick, mygreen] (1,-5)--(11,-5);
   \tdot{1}{-7}{blue}
    \tdot{3}{-11}{mygreen}
    \tdot{5}{-1}{blue}
    \tdot{7}{-9}{blue}
    \tdot{9}{-3}{blue}
    \tdot{11}{-5}{mygreen}
    \tdot{1}{-1}{black}
    \tdot{1}{-3}{black}
    \tdot{7}{-3}{black}
    \tdot{1}{-5}{mygreen}
    \tdot{3}{-3}{mygreen}
    \foreach \x in {1,...,6}
      \node at (-1+2*\x,1) {$\x$};
    \foreach \x in {1,...,6}
      \node at (-1,1-2*\x) {$\x$};
    \end{tikzpicture}
  \caption{A possible CNAT $T_3$ obtained from $T'$ by adding dots in cells $c_{\mathrm{left}}$ and $c_{\mathrm{up}}$.\label{fig:du_dl}}
  \end{subfigure}

\caption{Illustrating how to construct three CNATs with an associated permutation $\pi = 461523$ which does not satisfy the quadrant condition for $k=4$.\label{fig:B(n,2)_quad_cond}}
\end{figure}

However, there is a slight imprecision in the above argument, when we assume that the permutation $\pi'$ obtained by deleting $d_1'$ and $d_2'$ from $\pi$ has a CNAT, which is equivalent to $\pi'$ being irreducible, or equivalently to the graph $G' := G_{\pi'}$ being connected. This may in fact not always be the case (see example below). Denote $(i, \pi_i)$ the cell of the dot $d_1'$, and suppose that deleting $\pi_i$ from $\pi$ (and re-labelling other elements) yields a reducible permutation $\pi^1 \in S_{n-1}$. This means that $\pi^1$ has a breakpoint $j$ for some $1 \leq j < n-1$. Moreover, the permutation $\pi^2 := (\pi_i - j)(\pi_{j+1} - j)(\pi_{j+2} - j) \cdots (\pi_n - j)$ must be irreducible (a breakpoint of $\pi^2$ would also be a breakpoint of $\pi$), and as such has a CNAT, say $T''$. Now in the CNAT construction, when we delete $d_1'$, we also delete all dots in columns $j+1$ to $n$. Then, when re-inserting $d_1'$ into the partial CNAT $T'$, we re-insert those columns by ``gluing'' the CNAT $T''$ into the (full) CNATs. The process is the same when removing $d_2'$, and the rest of the proof is unchanged.

We illustrate this case in Figure~\ref{fig:disconnect_case}, with the permutation $\pi = 4612375$, and $k=4$. The green dots to delete are $d_1'$ in cell $(2,6)$ and $d_2'$ in cell $(5,3)$. However, in this case, deleting the dot $d_1'$ yields $\pi' = (4123)(65)$ which has a breakpoint at $j=4$, so does not have a CNAT. We therefore will also need to delete the dots in columns $6$ and $7$ (in pink). We then construct a partial CNAT $T'$ with associated ``partial permutation'' $412$, resulting from deleting these alongside $d_1'$ and $d_2'$, and a partial CNAT $T''$ with associated ``partial permutation'' $675$ (in pink in Figure~\ref{fig:partial_disconnet}). We then simply ``glue'' $T''$ into the full CNATs that we construct when re-inserting $d_1'$ and $d_2'$ as in the general case.

\begin{figure}[ht]
\centering
  \begin{subfigure}[b]{0.4\textwidth}
    \centering
    \begin{tikzpicture}[scale=0.3]
    \draw[step=2cm,thick] (0,0) grid (14,-14.01);
    \tdot{1}{-7}{blue}
    \tdot{3}{-11}{mygreen}
    \tdot{5}{-1}{blue}
    \tdot{7}{-3}{blue}
    \tdot{9}{-5}{mygreen}
    \tdot{11}{-13}{pink}
    \tdot{13}{-9}{pink}
    \foreach \x in {1,...,7}
    	\node at (-1+2*\x,1) {$\x$};
    \foreach \x in {1,...,7}
    	\node at (-1,1-2*\x) {$\x$};
    \node [left] at (0,-6) {$\textcolor{red}{k=4}$};
    \node [below] at (6,-14) {$\textcolor{red}{k=4}$};
    \draw [red, very thick] (0,-6)--(14,-6);
    \draw [red, very thick] (6,0)--(6,-14);
    \end{tikzpicture}
  \caption{The permutation $\pi = 4612375$. Deleting the dot $d_1'$ in cell $(2, 6)$ would disconnect the pink dots from the remainder of the graph.\label{fig:perm_disconnect}}
  \end{subfigure}
  \hspace{0.8cm}
  \begin{subfigure}[b]{0.4\textwidth}
    \centering
    \begin{tikzpicture}[scale=0.3]
    \draw[step=2cm,thick] (0,0) grid (14,-14.01);
    \draw[thick] (1,-7)--(1,-1)--(5,-1);
    \draw[thick] (7,-3)--(1,-3);
    \draw[thick, pink] (3,-11)--(3,-9)--(13,-9);
    \draw[thick, pink] (11,-9)--(11,-13);
    \tdot{1}{-7}{blue}
    \tdot{3}{-11}{mygreen}
    \tdot{5}{-1}{blue}
    \tdot{7}{-3}{blue}
    \tdot{9}{-5}{mygreen}
    \tdot{11}{-13}{pink}
    \tdot{13}{-9}{pink}
    \tdot{1}{-1}{black}
    \tdot{1}{-3}{black}
    \tdot{3}{-9}{pink}
    \tdot{11}{-9}{pink}
    \foreach \x in {1,...,7}
      \node at (-1+2*\x,1) {$\x$};
    \foreach \x in {1,...,7}
      \node at (-1,1-2*\x) {$\x$};
    \node [below] at (6,-15) {};
    \end{tikzpicture}
  \caption{Possible ``partial'' CNATs $T'$ (black) and $T''$ (pink) with associated ``partial permutations'' $412$ and $675$ respectively.\label{fig:partial_disconnet}}
  \end{subfigure}
  
  \hspace{0.2cm}
  
  \begin{subfigure}[b]{\textwidth}
    \centering
    \begin{tikzpicture}[scale=0.3]
    \draw[step=2cm,thick] (0,0) grid (14,-14.01);
    \draw[thick] (1,-7)--(1,-1)--(5,-1);
    \draw[thick] (7,-3)--(1,-3);
    \draw[thick, pink] (3,-11)--(3,-9)--(13,-9);
    \draw[thick, pink] (11,-9)--(11,-13);
    \draw[thick, mygreen] (3,-9)--(3,-5);
    \draw[thick, mygreen] (9,-5)--(1,-5);
    \tdot{1}{-7}{blue}
    \tdot{3}{-11}{mygreen}
    \tdot{5}{-1}{blue}
    \tdot{7}{-3}{blue}
    \tdot{9}{-5}{mygreen}
    \tdot{11}{-13}{pink}
    \tdot{13}{-9}{pink}
    \tdot{1}{-1}{black}
    \tdot{1}{-3}{black}
    \tdot{3}{-9}{pink}
    \tdot{11}{-9}{pink}
    \tdot{1}{-5}{mygreen}
    \tdot{3}{-5}{mygreen}
    \foreach \x in {1,...,7}
      \node at (-1+2*\x,1) {$\x$};
    \foreach \x in {1,...,7}
      \node at (-1,1-2*\x) {$\x$};
      
    \begin{scope}[shift={(18,0)}]
    \draw[step=2cm,thick] (0,0) grid (14,-14.01);
    \draw[thick] (1,-7)--(1,-1)--(5,-1);
    \draw[thick] (7,-3)--(1,-3);
    \draw[thick, pink] (3,-11)--(3,-9)--(13,-9);
    \draw[thick, pink] (11,-9)--(11,-13);
    \draw[thick, mygreen] (3,-9)--(3,-3);
    \draw[thick, mygreen] (9,-5)--(3,-5);
    \tdot{1}{-7}{blue}
    \tdot{3}{-11}{mygreen}
    \tdot{5}{-1}{blue}
    \tdot{7}{-3}{blue}
    \tdot{9}{-5}{mygreen}
    \tdot{11}{-13}{pink}
    \tdot{13}{-9}{pink}
    \tdot{1}{-1}{black}
    \tdot{1}{-3}{black}
    \tdot{3}{-9}{pink}
    \tdot{11}{-9}{pink}
    \tdot{3}{-3}{mygreen}
    \tdot{3}{-5}{mygreen}
    \foreach \x in {1,...,7}
      \node at (-1+2*\x,1) {$\x$};
    \foreach \x in {1,...,7}
      \node at (-1,1-2*\x) {$\x$};
    \end{scope}
    
    \begin{scope}[shift={(36,0)}]
    \draw[step=2cm,thick] (0,0) grid (14,-14.01);
    \draw[thick] (1,-7)--(1,-1)--(5,-1);
    \draw[thick] (7,-3)--(1,-3);
    \draw[thick, pink] (3,-11)--(3,-9)--(13,-9);
    \draw[thick, pink] (11,-9)--(11,-13);
    \draw[thick, mygreen] (3,-9)--(3,-3);
    \draw[thick, mygreen] (9,-5)--(1,-5);
    \tdot{1}{-7}{blue}
    \tdot{3}{-11}{mygreen}
    \tdot{5}{-1}{blue}
    \tdot{7}{-3}{blue}
    \tdot{9}{-5}{mygreen}
    \tdot{11}{-13}{pink}
    \tdot{13}{-9}{pink}
    \tdot{1}{-1}{black}
    \tdot{1}{-3}{black}
    \tdot{3}{-9}{pink}
    \tdot{11}{-9}{pink}
    \tdot{1}{-5}{mygreen}
    \tdot{3}{-3}{mygreen}
    \foreach \x in {1,...,7}
      \node at (-1+2*\x,1) {$\x$};
    \foreach \x in {1,...,7}
      \node at (-1,1-2*\x) {$\x$};
    \end{scope}
    \end{tikzpicture}
  \caption{Three CNATs that can be constructed by re-inserting $T''$ and $d_2'$ into $T'$. \label{fig:three_cnats_disconnect}}
  \end{subfigure}

\caption{Illustrating how to construct three CNATs with an associated permutation $\pi = 4612375$.\label{fig:disconnect_case}}
\end{figure}
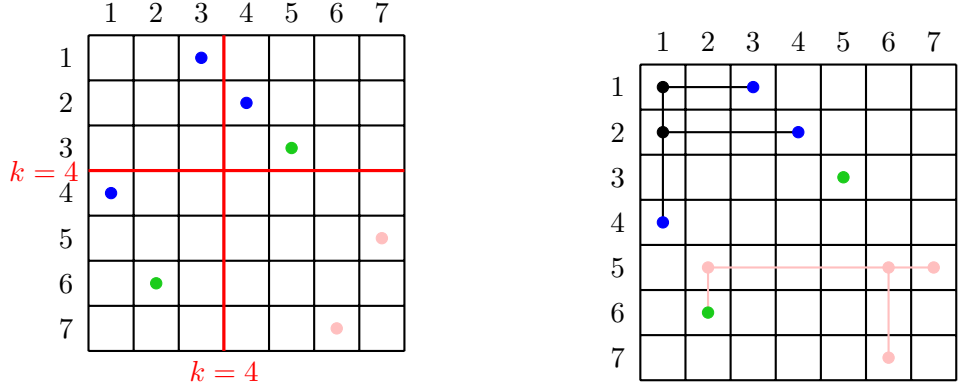
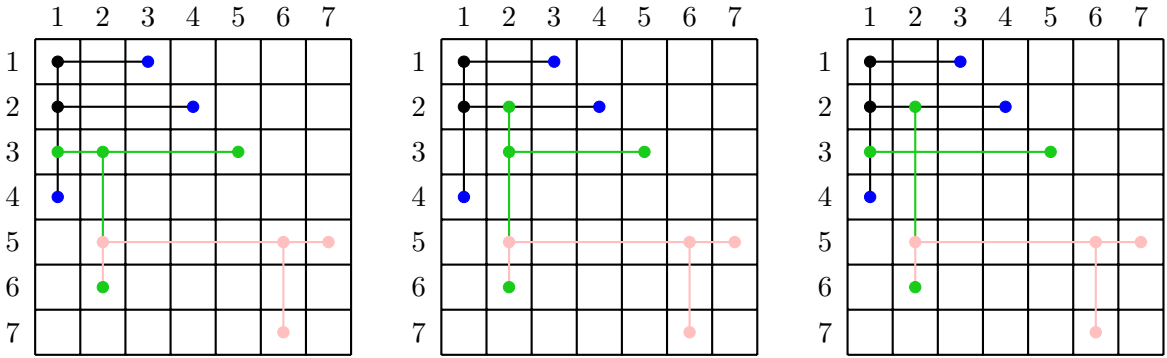

\end{proof}

Combined with Proposition~\ref{pro:B(n,1)_quadrants}, Lemma~\ref{lem:B(n,2)_quad_cond} implies the following.

\begin{lemma}\label{lem:B(n,2)_fixedpoint}
Let $n \geq 2$, and $\pi \in B(n,2)$ an $n$-permutation with exactly $2$ CNATs. Then $\pi$ has a unique fixed point $j \in \{2, \ldots, n-1\}$.
\end{lemma}

\begin{proof}
Suppose $\pi$ is a permutation with $2$ CNATs. Based on Lemma~\ref{lem:B(n,2)_quad_cond}, $\pi$ satisfies the quadrant condition. However, from Proposition~\ref{pro:B(n,1)_quadrants}, if $\pi$ has no fixed point, then it will have a single CNAT. Hence, there must be at least one fixed point in $\pi$. However, if $\pi$ has two fixed points $j$ and $j'$, then we can write $\pi = \Insert[j]\big( \Insert[j'](\pi')\big)$ for some permutation $\pi'$. Moreover, $\pi'$ is irreducible, so has at least one CNAT by~\cite[Theorem~3.3]{CO}. Therefore, by Lemma~\ref{lem:k_Insert_2k}, we then have $\cnat[\pi] \geq 4 \cdot \cnat[\pi'] \geq 4$, which contradicts the fact that $\pi$ has $2$ CNATs.
\end{proof}

Much like Proposition~\ref{pro:B(n,1)_quadrants} for $B(n,1)$, Lemmas~\ref{lem:B(n,2)_quad_cond} and \ref{lem:B(n,2)_fixedpoint} in fact give a characterisation of the set $B(n,2)$, as follows.

\begin{proposition}\label{pro:B(n,2)_characterisation}
Let $\pi$ be a permutation. Then $\pi$ has exactly $2$ CNATs if, and only if, $\pi$ satisfies the quadrant condition and has a unique fixed point $j \in \{2, \ldots, n -1\}$.
\end{proposition}

\begin{proof}
If $\pi$ has exactly $2$ CNATs, then it satisfies the quadrant condition by Lemma~\ref{lem:B(n,2)_quad_cond} and has a unique fixed point $j \in \{2, \ldots, n -1\}$ by Lemma~\ref{lem:B(n,2)_fixedpoint}. Conversely, if $\pi$ satisfies the quadrant condition and has a unique fixed point $j$, then deleting $j$ from $\pi$ yields a permutation $\pi'$ with no fixed point which also satisfies the quadrant condition. Thus $\pi'$ has a unique CNAT by Proposition~\ref{pro:B(n,1)_quadrants}. Since $\pi = \Insert[j](\pi')$ by construction, we deduce that $\pi$ has exactly $2$ CNATs by Lemma~\ref{lem:k_Insert_2k}.
\end{proof}

Theorem~\ref{thm:Insert_B(n,1)toB(n+1,2)} now follows straightforwardly from Propositions~\ref{pro:B(n,1)_quadrants} and \ref{pro:B(n,2)_characterisation}. Indeed, we have shown that permutations with a single CNAT are those which satisfy the quadrant condition and have no fixed point, while permutations with two CNATs are those that satisfy the quadrant condition and have a single fixed point. Since the insertion or deletion of a fixed point does not affect whether a permutation satisfies the quadrant condition or not, it is immediate that the map $\Phi$ defined by $\Phi(j, \pi) := \Insert[j](\pi)$ is indeed a bijection from $\{2,\ldots,n\} \times B(n,1)$ to $B(n+1, 2)$.

\subsection{Linking permutations with two and three CNATs}\label{subsec:B(n,2)_B(n+1,3)}

In this part, we establish a bijection between $n$-permutations with $2$ CNATs and $(n+1)$-permutations with $3$ CNATs (Theorem~\ref{thm:biject_B(n,2)toB(n+1,3)}). For this, we will give characterisations of the sets $B(n,2)$ and $B(n,3)$ in terms of permutation patterns, as introduced at the end of Section~\ref{subsec:perms_prelims}. We begin by stating the following lemma, which will be useful at various points in this section.

\begin{lemma}\label{lem:cnat_pattern}
Suppose that $\pi, \tau$ are permutations such that $\pi$ contains the pattern $\tau$. Then we have $\cnat[\tau] \leq \cnat[\pi]$.
\end{lemma}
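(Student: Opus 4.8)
The plan is to translate the statement into the language of permutation graphs and minimal recurrent configurations via Theorem~\ref{thm:cnat_minrec}, and then to exploit the structural lemmas on $\minrec$ already established. Concretely, if $\pi$ contains the pattern $\tau$, with $\tau$ a $k$-permutation occurring at positions giving the letters $\pi_{i_1}, \cdots, \pi_{i_k}$, then by the observation recorded just before Proposition~\ref{pro:perm_patterns_cycles}, the induced subgraph $G_{\pi}\left[ \{\pi_{i_1}, \cdots, \pi_{i_k}\} \right]$ is isomorphic to the permutation graph $G_{\tau}$. Since $\cnat[\tau] = \minrec[G_{\tau}]$ and $\cnat[\pi] = \minrec[G_{\pi}]$ by Theorem~\ref{thm:cnat_minrec} (using that $\minrec$ is sink-independent), it suffices to prove the purely graph-theoretic statement: if $H$ is isomorphic to an induced subgraph of $G$, then $\minrec[H] \leq \minrec[G]$.

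First I would reduce to the case where $H$ is obtained from $G$ by deleting a single vertex, by induction on $\vert V(G) \vert - \vert V(H) \vert$ (isomorphic graphs trivially have the same $\minrec$, since the bijection of Proposition~\ref{pro:minrec_acor} is purely combinatorial). So fix a vertex $x \in V(G)$ and set $H := G\left[ V(G) \setminus \{x\} \right]$; we must show $\minrec[H] \leq \minrec[G]$. The key point is that deleting a vertex can be realised as a sequence of simpler operations whose effect on $\minrec$ is controlled by the lemmas in Section~\ref{subsec:minrec_prelims}. If $\dgr[x] = 1$ in $G$, then $H = G \setminus \{x\}$ up to the pruning operation, and Lemma~\ref{lem:tree_pruning_minrec} gives equality directly. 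If $\dgr[x] \geq 2$, the idea is to first ``contract'' $x$ out: pick a neighbour $y$ of $x$, and observe that the graph $G''$ obtained from $G$ by deleting $x$ and adding an edge $(y,z)$ for every other neighbour $z$ of $x$ not already adjacent to $y$ satisfies $\minrec[G''] \leq \minrec[G]$ — because $G$ is, up to edge duplications (Lemma~\ref{lem:dupl_edge_minrec}) and edge additions (Lemma~\ref{lem:add_edge_minrec}), obtained from $G''$: one recovers each path $z - x - y$ in $G$ from the edge $(y,z)$ in $G''$ by an edge duplication, and one adds back the remaining edges of $G$ incident to $x$ by edge additions (these lemmas both move in the direction of \emph{not decreasing} $\minrec$). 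Finally $H$ is an induced subgraph of $G''$ with the same vertex set minus nothing — in fact $H = G'' \setminus \{\text{the edges }(y,z)\text{ that were genuinely new}\}$ — so $H$ is obtained from $G''$ by deleting edges, and by iterating Lemma~\ref{lem:add_edge_minrec} in reverse, $\minrec[H] \leq \minrec[G'']\leq \minrec[G]$.

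The main obstacle is getting the bookkeeping of this vertex-deletion reduction exactly right: one has to be careful that the edges added when ``splitting off'' $x$ versus the edges already present in $H$ are handled consistently, so that the chain of inequalities composes in the correct direction. An alternative, and perhaps cleaner, route that avoids this is to argue directly with acyclic orientations: given $s \in V(H)$, any $s$-rooted acyclic orientation $\OO_H$ of $H$ extends to an $s$-rooted acyclic orientation $\OO_G$ of $G$ by orienting all edges incident to the deleted vertices ``outward'' along a linear extension — i.e., process the deleted vertices in some fixed order appended after a topological order of $\OO_H$, orienting each new edge toward the later vertex; this keeps $s$ the unique target, and the map $\OO_H \mapsto \OO_G$ is injective. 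Then Proposition~\ref{pro:minrec_acor} gives $\minrec[H] = \vert \AO{s}{H}\vert \leq \vert \AO{s}{G}\vert = \minrec[G]$, and Theorem~\ref{thm:cnat_minrec} finishes the proof. I expect the write-up to favour this acyclic-orientation argument, since it is self-contained and sidesteps the lemma-chaining entirely; the structural lemmas would then only be cited for the record.
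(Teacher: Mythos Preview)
Your acyclic-orientation extension argument is sound and gives a valid proof, but it is not the route the paper takes. The paper does chain the structural lemmas, though not vertex-by-vertex as in your first sketch: it fixes a spanning tree $G'$ of $G_\pi$, sets $G^0$ to be the graph on $[n]$ with edge set $E(G') \cup E(H)$ (where $H \cong G_\tau$ is the induced copy of $G_\tau$ inside $G_\pi$), observes that pruning $G^0$ strips away exactly the tree branches contributed by $G'$ so that $\minrec[G^0] = \minrec[H]$ via Lemma~\ref{lem:tree_pruning_minrec}, and then reaches $G_\pi$ from $G^0$ by successive edge additions (Lemma~\ref{lem:add_edge_minrec}). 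Growing from $H$ to the full vertex set in a single spanning-tree step is precisely what sidesteps the bookkeeping you flagged in your first approach --- and your worry there was justified: edge duplication introduces a \emph{fresh} vertex on each duplicated edge, so your ``split off $x$'' manoeuvre cannot reassemble a single $x$ when $x$ has three or more neighbours, and the intermediate single-vertex-deleted graphs need not remain connected. Your orientation-extension route, by contrast, proves the monotonicity $\minrec[H] \leq \minrec[G_\pi]$ directly from Proposition~\ref{pro:minrec_acor} with no appeal to pruning or edge addition, which is arguably cleaner and more general. One small slip in your description: under the usual convention the target $s$ is the \emph{last} element of a topological order of $\OO_H$, so appending the deleted vertices $W$ \emph{after} it and directing new edges toward the later vertex makes the final element of $W$ the unique target rather than $s$. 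You want instead to \emph{prepend} $W$ (in an order, guaranteed by the connectedness of $G_\pi$, in which each $w_i$ has a neighbour in $\{w_{i+1}, \ldots\} \cup V(H)$); then every vertex other than $s$ acquires an outgoing edge, acyclicity is immediate from the linear order, and restriction to $H$ gives the injectivity you need.
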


\begin{proof}
Since $\pi$ contains the pattern $\tau$, the permutation graph $G_{\pi}$ contains an induced subgraph $H$ which is isomorphic to $G_{\tau}$ by the remarks at the end of Section~\ref{subsec:perms_prelims}. Note that we have $a_{G_{\tau}} = t_H$ in this case. The graph $G_{\pi}$ can then be constructed as follows. Fix some spanning tree $G'$ of $G_{\pi}$. Let $G^0$ be the graph with vertex set $[n] = V\left(G_{\pi}\right)$ and edge set $E\left( G^0 \right) := E(G') \cup E(H)$. By construction, since $G'$ is a tree, we have $\Prune[G^0] = \Prune[H]$, so that $a_{G^0} = t_H$ by Lemma~\ref{lem:tree_pruning_tutte}. Moreover, we have $E\left( G^0 \right) \subseteq E\left(G_{\pi}\right)$, so that the graph $G_{\pi}$ can be obtained from $G^0$ through a (possibly empty) sequence of edge additions. Applying Lemma~\ref{lem:add_edge_tutte} then yields that $a_{G^0} \leq a_{G_{\pi}}$. Combined with the above, we get $a_{G_{\tau}} = t_H \leq a_{G_{\pi}}$, and the result follows from Theorem~\ref{thm:cnat_tutte}.
\end{proof}

\begin{proposition}\label{pro:pattern_B(n,2)}
Let $\pi$ be a permutation. Then $\pi$ has exactly $2$ CNATs if, and only if, $\pi$ contains a unique occurrence of the $321$ pattern and avoids $3412$. Moreover, if $\pi_i, \pi_j, \pi_k$ is the unique occurrence of $321$, with $i < j < k$, then we have $\pi_j = j$, i.e.\ $j$ is the unique fixed point of $\pi$.
\end{proposition}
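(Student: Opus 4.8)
The plan is to pass to the permutation graph $G_\pi$ via Theorem~\ref{thm:cnat_minrec}, so that $\cnat[\pi] = \minrec[G_\pi]$, and to use the dictionary of Proposition~\ref{pro:perm_patterns_cycles}: occurrences of $321$ in $\pi$ correspond bijectively to triangles of $G_\pi$ (an induced subgraph on three vertices is a copy of $G_{321}=K_3$ exactly when those vertices form a triangle), while $\pi$ avoids $3412$ exactly when $G_\pi$ has no induced $4$-cycle; since $G_\pi$ never induces a cycle of length $\geq 5$, avoiding $3412$ is equivalent to $G_\pi$ being chordal. Recall also that $G_\pi$ is connected because $\pi$ is irreducible.

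Two facts drive everything. \emph{Fact 1: if $\pi$ contains $3412$ then $\cnat[\pi]\geq 3$}, since Lemma~\ref{lem:cnat_pattern} gives $\cnat[\pi]\geq \cnat[3412] = \minrec[G_{3412}] = \minrec[C_4] = 3$. \emph{Fact 2: if $G_\pi$ contains two distinct triangles then $\minrec[G_\pi]\geq 3$} --- this is the heart of the argument. Fix one triangle; it is a copy of $C_3$ with $\minrec = 2$. Build all of $G_\pi$ from it by repeatedly (i) attaching a not-yet-added vertex to the current (connected) graph by a single edge --- a new degree-$1$ vertex, which leaves $\minrec$ unchanged by Lemma~\ref{lem:tree_pruning_minrec} --- and (ii) adding an edge between two vertices already present, which strictly increases $\minrec$ by Lemma~\ref{lem:add_edge_minrec}. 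The step-(i) edges form a forest (each carries its vertex to an earlier one), so the three edges of the \emph{second} triangle cannot all be step-(i) edges: its highest-indexed vertex lies outside the first triangle and is incident to two of its edges, at most one of which is its step-(i) edge. Hence at least one step-(ii) addition occurs, and $\minrec[G_\pi]\geq 3$.

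The equivalence now follows. If $\cnat[\pi]=2$, then $\pi$ avoids $3412$ by Fact~1, so $G_\pi$ is chordal; it is not a tree, else $\cnat[\pi]=1$ by Lemma~\ref{lem:minrec_tree}, so it contains a cycle and hence (being chordal) a triangle, i.e.\ $\pi$ contains $321$; and by Fact~2 it contains at most one triangle, so $\pi$ has a unique occurrence of $321$. Conversely, suppose $\pi$ has a unique occurrence of $321$ and avoids $3412$; then $G_\pi$ is chordal with exactly one triangle. Pruning removes only degree-$1$ vertices, and a triangle vertex always has degree $\geq 2$, so $H:=\Prune[G_\pi]$ still has exactly one triangle; it is connected, chordal, with minimum degree $\geq 2$, hence has at least three vertices. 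By Dirac's theorem a non-complete chordal graph has two non-adjacent simplicial vertices, each of which (having degree $\geq 2$ with a clique neighbourhood) lies in a triangle, and these two triangles are distinct since neither simplicial vertex lies in the other's triangle. As $H$ has only one triangle it must be complete, and the only complete graph with exactly one triangle is $K_3$; thus $H=C_3$ and $\cnat[\pi]=\minrec[G_\pi]=\minrec[H]=\minrec[C_3]=2$ by Lemma~\ref{lem:tree_pruning_minrec}.

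Finally, for the ``moreover'' statement we already know from Lemma~\ref{lem:B(n,2)_fixedpoint} that $\pi$ has a unique fixed point $j\in\{2,\dots,n-1\}$. Since $\pi$ is irreducible, the values in positions $1,\dots,j-1$ cannot be exactly $\{1,\dots,j-1\}$, and as they avoid $j=\pi_j$ one of them must exceed $j$; symmetrically, some value in positions $j+1,\dots,n$ must be smaller than $j$. Choosing $a<j$ with $\pi_a>j$ and $b>j$ with $\pi_b<j$ exhibits an occurrence $\pi_a\,\pi_j\,\pi_b$ of $321$ whose middle letter is in position $j$ and equals $j$; by the uniqueness just proved this is the occurrence $\pi_i\pi_j\pi_k$ of the statement, so $\pi_j=j$ as claimed. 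I expect the main obstacle to be Fact~2 --- obtaining a clean lower bound of $3$ on $\minrec[G_\pi]$ when $G_\pi$ carries two triangles, uniformly across the cases where they are disjoint, share a vertex, or share an edge; the rest is bookkeeping with the pattern/cycle dictionary and the structural lemmas already established.
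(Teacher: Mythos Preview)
Your proof is correct and follows essentially the same route as the paper's: both pass to the permutation graph via Theorem~\ref{thm:cnat_minrec}, use Lemma~\ref{lem:cnat_pattern} to rule out $3412$, and bound $\minrec$ from below by building $G_\pi$ up from a single triangle using Lemmas~\ref{lem:tree_pruning_minrec} and \ref{lem:add_edge_minrec} to exclude a second $321$ occurrence. Your Fact~2 argument is the paper's spanning-tree-plus-triangle construction, just organised slightly differently.

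The one genuine difference is in the converse direction. The paper simply asserts (via Proposition~\ref{pro:perm_patterns_cycles}) that a unique $321$ together with $3412$-avoidance forces $G_\pi$ to be a decorated $3$-cycle, implicitly using that a connected graph whose only induced cycle is a single triangle has that triangle as its pruned graph. You instead prove this explicitly by invoking Dirac's theorem on simplicial vertices of chordal graphs to conclude $\Prune[G_\pi]=K_3$. This is a legitimate alternative that makes the structural step more transparent, at the cost of importing an external result; the paper's implicit argument is shorter but leaves more to the reader. For the ``moreover'' clause, both proofs locate a $321$ through the fixed point $j$ guaranteed by Lemma~\ref{lem:B(n,2)_fixedpoint}; the paper uses the quadrant characterisation (Proposition~\ref{pro:B(n,2)_characterisation}) where you use irreducibility directly, which amounts to the same thing.
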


\begin{proof}
First suppose that $\pi$ contains a unique occurrence of the $321$ pattern, and avoids $3412$. By Proposition~\ref{pro:perm_patterns_cycles}, this implies that the permutation graph $G_{\pi}$ of $\pi$ is a decorated $3$-cycle. From Lemma~\ref{lem:tutte_dec_cycle}, we then obtain $a_G = 3-1 = 2$, which implies by Theorem~\ref{thm:cnat_tutte}, that $\pi$ has exactly $2$ CNATs.

Now suppose that $\pi$ has exactly $2$ CNATs. We know from Lemma~\ref{lem:B(n,2)_fixedpoint} that $\pi$ has a (unique) fixed point $j \in \{2, \ldots, n-1\}$. Consider the lower-left $j$-quadrant $\Pi_{<j, \geq j}$. By Proposition~\ref{pro:B(n,2)_characterisation} it has a unique dot, and this dot cannot be in row $j$, since the dot in row $j$ is in column $j$. Thus this dot must be $(i, \pi_i)$ with $i < j$ and $\pi_i > j = \pi_j$. Similarly, the upper-right $j$-quadrant $\Pi_{ \geq j, < j}$ has a unique dot which is not in column $j$, so is therefore $(k, \pi_k)$ with $k > j$ and $\pi_k < j = \pi_j$. In other words, $\pi_i, \pi_j = j, \pi_k$ forms a $321$ pattern in the permutation $\pi$, or equivalently induces a $3$-cycle in the permutation graph $G_{\pi}$. 

We show that there can be no other occurrences of $321$, or any occurrence of $3412$, in the permutation $\pi$. Otherwise, $G_{\pi}$ would contain another cycle other than that induced on $\pi_i, \pi_j, \pi_k$. In particular this would imply that the $2$-core $\Prune[G_{\pi}]$ is not isomorphic to the $3$-cycle, and therefore we would have $a_{G_{\pi}} > a_{C_3} = 2$ by Lemma~\ref{lem:strict_subgraph}. This completes the proof of the proposition. 
\end{proof}

%

The $321$ pattern structure for $\pi \in B(n,2)$ can be conveniently represented using the terminology of \emph{mesh patterns}. Mesh patterns were introduced by Br\"and\'en and Claesson~\cite{MeshBC} as a generalisation of classical permutation patterns with added restrictions. Formally a \emph{mesh} pattern is a pair $p = (\tau, R)$, with $\tau \in S_k$ and $R \subseteq [0, k] \times [0, k]$ for some positive integer $k$, where $[0,k]$ denotes the integer interval $\{0, \ldots, k\}$. To depict a mesh pattern, we plot the permutation $\tau$ on a $k \times k$ grid (with our standard directions from top to bottom on the rows, and left to right on the columns), and for each $(i,j) \in R$, we shade the unit with top-left corner $(i, j)$. For example, the mesh pattern $p =\left( 3241, \{ (0, 2),(1, 3),(1, 4),(4, 2),(4, 3) \} \right)$ is depicted in Figure~\ref{fig:exa_mesh} below.

 \begin{figure}[ht]
  \centering
  \begin{tikzpicture}[scale=0.4]
    \draw[step=2cm,thick] (0.01,-0.01) grid (9.99,-9.99);
    \fill[pattern=north east lines, pattern color=black!45] (0,-6) rectangle (2,-4);
    \fill[pattern=north east lines, pattern color=black!45] (2,-6) rectangle (4,-10);
    \fill[pattern=north east lines, pattern color=black!45] (2,-4) rectangle (8,-4) rectangle (10,-8);
    \tdot{2}{-6}{blue}
    \tdot{4}{-4}{blue}
    \tdot{6}{-8}{blue}
    \tdot{8}{-2}{blue}
    \node at (2,1) {$1$};
    \node at (4,1) {$2$};
    \node at (6,1) {$3$};
    \node at (8,1) {$4$};
    \node at (-1,-2) {$1$};
    \node at (-1,-4) {$2$};
    \node at (-1,-6) {$3$};
    \node at (-1,-8) {$4$};
  \end{tikzpicture}
  \caption{The depiction of the mesh pattern $p =\left( 3241, \{ (0, 2),(1, 3),(1, 4),(4, 2),(4, 3) \} \right)$.\label{fig:exa_mesh}}
\end{figure}
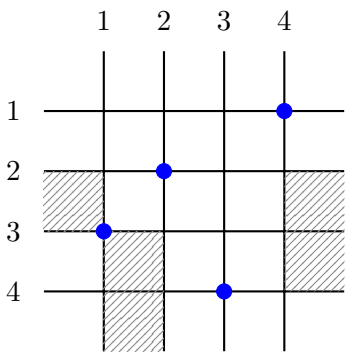

We then say that a permutation $\pi$ contains a mesh pattern $p = (\tau, R)$ if $\pi$ contains the (classical) pattern $\tau$ in such a way that the shaded regions do not contain any elements of $\pi$. For a more formal definition, see~\cite{MeshBC}. For example, the permutation $\pi = 53241$ contains the mesh pattern $p$ from Figure~\ref{fig:exa_mesh}, as the $5$ is not in a forbidden region. However, the permutation $\pi = 35241$ does not contain $p$, as the $5$ is in the forbidden region between columns $1$ and $2$ in Figure~\ref{fig:exa_mesh} (and there is only one occurrence of $\tau = 3241$ in $\pi$). With this terminology, a permutation $\pi \in B(n,2)$ contains a (unique) occurrence of the mesh pattern depicted in Figure~\ref{fig:321_B(n,2)_mesh} below. To check this, note that if any of the shaded regions contained a dot, we would get a second occurrence of the (classical) $321$ pattern. For example, if any region to the left of column $j$ and below row $j$ contained a dot, this would form an occurrence of $321$ with $\pi_j$ and $\pi_k$.

\begin{figure}[ht]
  \centering
  \begin{tikzpicture}[scale=0.4]
    \draw[step=2cm,thick] (0.01,-0.01) grid (7.99,-7.99);
    \fill[pattern=north east lines, pattern color=black!45] (0,-8) rectangle (4,-4);
    \fill[pattern=north east lines, pattern color=black!45] (8,0) rectangle (4,-4);
    \fill[pattern=north east lines, pattern color=black!45] (2,-4) rectangle (4,-2);
    \fill[pattern=north east lines, pattern color=black!45] (4,-6) rectangle (6,-4);
    \tdot{2}{-6}{blue}
    \tdot{4}{-4}{blue}
    \tdot{6}{-2}{blue}
    \node at (2,1) {$i$};
    \node at (4,1) {$j$};
    \node at (6,1) {$k$};
    \node [left] at (-0.5,-2) {$\pi_k$};
    \node [left] at (-0.5,-4) {$\pi_j = j$};
    \node [left] at (-0.5,-6) {$\pi_i$};
  \end{tikzpicture}
  \caption{Illustrating the unique $321$ pattern in $\pi \in B(n,2)$ using mesh patterns.\label{fig:321_B(n,2)_mesh}}
\end{figure}
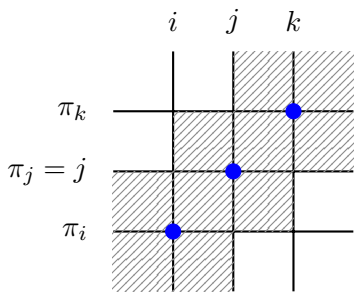

\begin{proposition}\label{pro:pattern_B(n,3)}
Let $\pi$ be a permutation. Then $\pi$ has exactly $3$ CNATs if, and only if, $\pi$ contains a unique occurrence of the $3412$ pattern and avoids $321$. Moreover, if $\pi_i, \pi_j, \pi_k, \pi_{\ell}$ is the occurrence of $3412$, with $i < j < k < \ell$, then we have $\pi_i = \pi_{\ell} + 1$ and $k = j+1$.
\end{proposition}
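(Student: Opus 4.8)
The plan is to reduce Proposition~\ref{pro:pattern_B(n,3)} to a graph-theoretic statement about the permutation graph $G_{\pi}$, exactly in the spirit of the proof of Proposition~\ref{pro:pattern_B(n,2)} but with ``decorated $3$-cycle'' replaced by ``decorated $4$-cycle''. Concretely, I would establish that, for an irreducible permutation $\pi$, the following are equivalent: (a) $\cnat[\pi] = 3$; (b) $G_{\pi}$ is a decorated $4$-cycle; (c) $\pi$ contains a unique occurrence of $3412$ and avoids $321$. The equivalence (a)$\Leftrightarrow$(b) is where the sandpile machinery enters, through Theorem~\ref{thm:cnat_minrec} together with the claim that, for any graph $G$, one has $\minrec[G] = 3$ if and only if $G$ is a decorated $4$-cycle; the equivalence (b)$\Leftrightarrow$(c) is a translation via Proposition~\ref{pro:perm_patterns_cycles}, since a decorated $4$-cycle is precisely a graph whose unique induced cycle is a single $C_4$, and induced $3$-, $4$-, and longer cycles of $G_{\pi}$ correspond respectively to occurrences of $321$, occurrences of $3412$, and nothing at all.

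For the ``straightforward'' implications I would argue as follows. If $\pi$ has a unique occurrence of $3412$ and avoids $321$, then by Proposition~\ref{pro:perm_patterns_cycles} the graph $G_{\pi}$ has exactly one induced cycle, a $4$-cycle; since $\Prune[G_{\pi}]$ is connected with minimum degree at least $2$, and a connected graph of minimum degree at least $2$ that is not a single cycle contains at least two distinct induced cycles (the cycle space being generated by induced cycles), it follows that $\Prune[G_{\pi}] = C_4$, i.e.\ $G_{\pi}$ is a decorated $4$-cycle; then Lemma~\ref{lem:minrec_dec_cycle} gives $\minrec[G_{\pi}] = 3$ and Theorem~\ref{thm:cnat_minrec} gives $\cnat[\pi] = 3$. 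Conversely, suppose $\cnat[\pi] = 3$; by Lemma~\ref{lem:minrec_tree} the graph $G_{\pi}$ is not a tree, and by Lemma~\ref{lem:tree_pruning_minrec} we have $\minrec[\Prune[G_{\pi}]] = 3$. If $\Prune[G_{\pi}]$ is a single cycle $C_m$, then Lemma~\ref{lem:minrec_dec_cycle} forces $m = 4$, so $G_{\pi}$ is a decorated $4$-cycle and we read off the pattern structure as above. Uniqueness of the $3412$ occurrence then follows as in Proposition~\ref{pro:pattern_B(n,2)}: a second occurrence would give a second induced $4$-cycle $D' \neq D$, and taking a spanning tree of $G_{\pi}$ containing three of the four edges of $D$ and adjoining the fourth yields a decorated $4$-cycle $G^0 \subseteq G_{\pi}$ with $\minrec[G^0] = 3$; since $D'$ is induced, $D' \not\subseteq G^0$, so passing from $G^0$ to $G_{\pi}$ requires at least one edge addition, whence $\minrec[G_{\pi}] > 3$ by Lemma~\ref{lem:add_edge_minrec}, a contradiction.

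The main obstacle is the remaining case, namely showing that if $\Prune[G_{\pi}]$ is \emph{not} a single cycle then $\minrec[G_{\pi}] \geq 4$; together with $\minrec = 1$ for trees and $\minrec = 2$ for decorated $3$-cycles, this is exactly what pins $\minrec = 3$ down to decorated $4$-cycles. This step is a finite case analysis. By Proposition~\ref{pro:perm_patterns_cycles} every induced cycle of $G_{\pi}$ has length $3$ or $4$, and since $\Prune[G_{\pi}]$ is connected of minimum degree at least $2$ and not a single cycle, it contains two distinct induced cycles $D, D'$; adjoining a connecting path $P$ in $G_{\pi}$ if they are disjoint, and extending by tree edges to a spanning subgraph $G^0 \subseteq G_{\pi}$, one has $\Prune[G^0] = \Prune[D \cup D' \cup P]$. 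According to whether $D$ and $D'$ share two vertices, one vertex, or none, this pruned graph is a diamond (a $4$-cycle with a chord, or the analogue with a $4$-cycle replacing a triangle), a ``bowtie'' of two cycles glued at a single vertex, or a ``handcuffs'' graph of two cycles joined by a path. In each case $\minrec \geq 4$: for the diamond because its number of minimal recurrent configurations exceeds $\minrec[C_4] = 3$ by Lemmas~\ref{lem:add_edge_minrec} and \ref{lem:minrec_dec_cycle}; for the handcuffs by reducing along Lemma~\ref{lem:dupl_edge_minrec} to the case of a single connecting edge; and for the bowtie and the single-edge handcuffs by directly counting $s$-rooted acyclic orientations via Proposition~\ref{pro:minrec_acor}, which yields $(p-1)(q-1) \geq 4$ for two short cycles meeting at a cut vertex. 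Then $\minrec[G_{\pi}] \geq \minrec[G^0] \geq 4$, contradicting $\cnat[\pi] = 3$. I expect this classification --- in essence the assertion ``$\minrec[G] = 3$ if and only if $G$ is a decorated $4$-cycle'' --- to be the technical heart of the proof.

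Finally, for the ``moreover'' statement I would work inside the decorated $4$-cycle $G_{\pi}$, which has no triangle and a unique induced $4$-cycle. Since $G_{3412}$ is the $4$-cycle with cyclic order $3$--$1$--$4$--$2$, the $4$-cycle of $G_{\pi}$ sits on $\{\pi_i, \pi_k, \pi_j, \pi_{\ell}\}$ in the cyclic order $\pi_i$--$\pi_k$--$\pi_j$--$\pi_{\ell}$, so $\{\pi_i, \pi_j\}$ and $\{\pi_k, \pi_{\ell}\}$ are non-edges and $\pi_k < \pi_{\ell} < \pi_i < \pi_j$. To obtain $\pi_i = \pi_{\ell} + 1$, suppose there were a value $v$ with $\pi_{\ell} < v < \pi_i$, say $v = \pi_m$ with $m \notin \{i,j,k,\ell\}$; which of the four pairs $\{v, \pi_i\}, \{v, \pi_j\}, \{v, \pi_k\}, \{v, \pi_{\ell}\}$ are inversions of $\pi$ depends only on the position of $m$ relative to $i < j < k < \ell$, and running through the five resulting cases one finds in each that $v$ is adjacent in $G_{\pi}$ either to two adjacent vertices of the $4$-cycle --- producing a triangle, contradicting that $\pi$ avoids $321$ --- or to two non-adjacent vertices of it --- producing, together with a common neighbour in the $4$-cycle, a second induced $4$-cycle through $v$ (the would-be chord being ruled out, again, by the absence of triangles), contradicting uniqueness. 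Hence $\pi_i = \pi_{\ell} + 1$, and the identity $k = j+1$ follows by the symmetric argument with the roles of positions and values exchanged (using that $3412$ is its own inverse). This final step is routine once the decorated-$4$-cycle structure is available.
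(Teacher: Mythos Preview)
Your proposal is correct and follows the same overall strategy as the paper: reduce the equivalence to the graph-theoretic statement ``$\minrec[G_\pi]=3$ if and only if $G_\pi$ is a decorated $4$-cycle'' via Theorem~\ref{thm:cnat_minrec} and Lemma~\ref{lem:minrec_dec_cycle}, and then translate back through Proposition~\ref{pro:perm_patterns_cycles}. The differences are organizational rather than conceptual. For the hard implication, the paper organises the case analysis by the length of the longest cycle (ruling out $\geq 5$, then pinning down the $4$-cycle case, then excluding two $3$-cycles via the butterfly graph), whereas you first pass to $\Prune[G_\pi]$, invoke Proposition~\ref{pro:perm_patterns_cycles}(3) to restrict induced cycles to lengths $3$ and $4$, and then classify how two such induced cycles can meet (diamond/bowtie/handcuffs), using cut-vertex multiplicativity for the latter two; this is arguably tidier and makes explicit use of the permutation-graph structure, while the paper's argument actually establishes the stronger fact that \emph{any} graph with $\minrec=3$ is a decorated $4$-cycle. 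For the ``moreover'' clause, your adjacency analysis on the $4$-cycle is exactly the paper's five-case pattern analysis (an intermediate position/value forces a $321$ or a second $3412$) rephrased through Proposition~\ref{pro:perm_patterns_cycles}; the two arguments are the same computation in different language. One small redundancy: once you have shown $G_\pi$ is a decorated $4$-cycle, uniqueness of the $3412$ occurrence is immediate (a unicyclic graph has a single induced cycle), so the separate spanning-tree argument you sketch is not needed.
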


\begin{proof}

First suppose that $\pi$ contains a unique occurrence of the $3412$ pattern, and avoids $321$. By Proposition~\ref{pro:perm_patterns_cycles}, this implies that the permutation graph $G_{\pi}$ of $\pi$ is a decorated $4$-cycle. From Lemma~\ref{lem:tutte_dec_cycle}, we then obtain $a_{G_{\pi}} = 4-1 = 3$, which implies by Theorem~\ref{thm:cnat_tutte}, that $\pi$ has exactly $3$ CNATs.

We now show the converse. Let $\pi$ be a permutation with exactly $3$ CNATs. We claim that $G := G_{\pi}$ must be a decorated $4$-cycle. Assume for now this claim proved. Then Proposition~\ref{pro:perm_patterns_cycles} implies that $\pi$ must indeed contain a unique occurrence of $3412$ (corresponding to the induced $4$-cycle of $G_{\pi}$), and avoids $321$ (since $G$ does not induce a $3$-cycle). It therefore remains to prove that $G$ is indeed a decorated $4$-cycle in this case.

First, note that if $G$ is a tree, then $\cnat[\pi] = a_G = 1 \neq 3$ (by Lemma~\ref{lem:tutte_tree}), so that $G$ must contain at least one cycle. If $G$ is a decorated $3$-cycle, then by Lemma~\ref{lem:tutte_dec_cycle} we would have $\cnat[\pi] = a_G = 2 \neq 3$. Therefore $G$ either contains at least two cycles, or a cycle of length at least $4$. If $G$ contains a cycle of length $k \geq 5$, then Lemma~\ref{lem:strict_subgraph} implies that $\cnat[\pi] = a_G \geq a_{C_k} = k-1 \geq 4$, a contradiction. Similarly, if $G$ contains a $4$-cycle, to get $a_G = 3$ the $2$-core $\Prune[G]$ must be isomorphic to $C_4$ (again using Lemma~\ref{lem:strict_subgraph}), i.e.\ $G$ is a decorated $4$-cycle.

It therefore remains to show that $G$ cannot contain two $3$-cycles. We again seek contradiction. If $G$ has two $3$-cycles which share an edge, then the ``outer cycle'' is a $4$-cycle (see Figure~\ref{fig:k=4_chord11}). One can check that this graph $G'$ satisfies $a_{G'} = 4$, and again by Lemma~\ref{lem:strict_subgraph} we get the contradictory $\cnat[\pi] = a_G \geq a_{G'} = 4$. It therefore remains to consider the case where $G$ contains at least two edge-disjoint $3$-cycles. Since $G$ is connected, these are necessarily joined in $G$ by a path of $k$ edges for some $k \geq 0$. Let $G'$ be the subgraph of $G$ consisting of the two $3$-cycles together with this joining path (see Figure~\ref{fig:ext_bowtie}). It is straightforward to check that we have $a_{G'} = 4$, and the result follows as above. This completes the proof of our claim that if $\pi$ has exactly $3$ CNATs, the associated permutation graph $G_{\pi}$ is a decorated $4$-cycle, as desired.

\begin{figure}[ht]
\centering
 \begin{tikzpicture}
  \node [draw, circle] (1) at (-0.5,0) {};
  \node [draw, circle] (2) at (1,1) {};
  \node [draw, circle] (3) at (-0.5,2) {};
  \node [draw, circle] (6) at (2.5,1) {};
  \node [draw, circle] (7) at (4,1) {};
  \node [draw, circle] (4) at (5.5,0) {};
  \node [draw, circle] (5) at (5.5,2) {};
  \draw [thick] (7)--(4)--(5)--(7)--(6)--(2)--(3)--(1)--(2);
  \end{tikzpicture}
  \caption{The subgraph $G'$ consisting of two $3$-cycles joined by a path of $k=2$ edges.\label{fig:ext_bowtie}}
\end{figure}
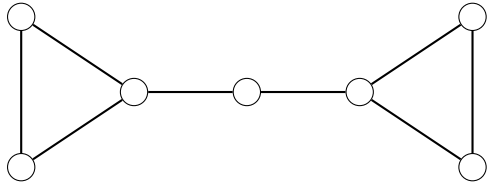

We now prove the second statement of the proposition. Let $\pi_i, \pi_j, \pi_k, \pi_{\ell}$ be the occurrence of the $3412$ pattern in $\pi \in B(n,3)$, with $i < j < k < \ell$. We wish to show that there are no dots between columns $j$ and $k$ (i.e.\ $k = j+1$), and no dots between rows $\pi_{\ell}$ and $\pi_i$ (i.e.\ $\pi_i = \pi_{\ell} + 1$). Let us first show that $k = j+1$, i.e.\ that there are no dots between columns $j$ and $k$. Otherwise, let $m$ be such that $j < m < k$. We show that the dot $(m, \pi_m)$ is either part of an occurrence of the $321$ pattern, or of the $3412$ pattern, which contradicts the above. There are three possible cases to consider.
\begin{enumerate}
\item If $\pi_m < \pi_k$, then $\pi_i, \pi_j, \pi_m, \pi_{\ell}$ is an occurrence of the $3412$ pattern.
\item If $\pi_m > \pi_j$, then $\pi_i, \pi_m, \pi_k, \pi_{\ell}$ is an occurrence of the $3412$ pattern.
\item If $\pi_k < \pi_m < \pi_j$, then $\pi_j, \pi_m, \pi_k$ is an occurrence of the $321$ pattern.
\end{enumerate}

We show similarly that $\pi_i = \pi_{\ell} + 1$. Otherwise, let $m$ be such that $\pi_{\ell} < \pi_m < \pi_i$. As above, there are three cases to consider.
\begin{enumerate}
\item If $m < i$, then $\pi_m, \pi_j, \pi_k, \pi_{\ell}$ is an occurrence of the $3412$ pattern.
\item If $m > \ell$, then $\pi_i, \pi_j, \pi_k, \pi_m$ is an occurrence of the $3412$ pattern.
\item If $i < m < \ell$, then $\pi_i, \pi_m, \pi_{\ell}$ is an occurrence of the $321$ pattern.
\end{enumerate}
As above, these all contradict the fact that $\pi_i, \pi_j, \pi_k, \pi_{\ell}$ is the unique occurrence of $3412$ in $\pi$, and that $\pi$ avoids 321. This concludes the proof.
\end{proof}

As in the $B(n,2)$ case, the $3412$ pattern structure for $\pi \in B(n,3)$ can be conveniently represented using mesh patterns. Indeed, for the occurrence of $3412$ to be unique, a permutation $\pi\in B(n,3)$ must contain a (unique) occurrence of the mesh pattern depicted in Figure~\ref{fig:3412_B(n,3)_mesh} below. Note that the presence of fully shaded regions between columns $j$ and $k$, resp.\ between rows $\pi_{\ell}$ and $\pi_i$, implies that we must have $k = j+1$, resp.\ $\pi_i = \pi_{\ell} + 1$. Similarly to the $B(n,2)$ case, we can check that the presence of a dot in any of the shaded regions would either yield an occurrence of $321$, or another occurrence of $3412$.

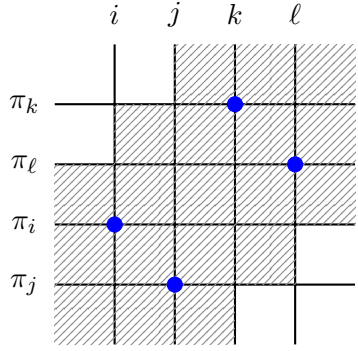
\begin{figure}[ht]
  \centering
  \begin{tikzpicture}[scale=0.4]
    \draw[step=2cm,thick] (0.01,-0.01) grid (9.99,-9.99);
    \fill[pattern=north east lines, pattern color=black!45] (0,-4) rectangle (10,-6);
    \fill[pattern=north east lines, pattern color=black!45] (4,0) rectangle (6,-4);
    \fill[pattern=north east lines, pattern color=black!45] (4,-10) rectangle (6,-6);
    \fill[pattern=north east lines, pattern color=black!45] (0,-10) rectangle (4,-6);
    \fill[pattern=north east lines, pattern color=black!45] (6,0) rectangle (10,-4);
    \fill[pattern=north east lines, pattern color=black!45] (2,-2) rectangle (4,-4);
    \fill[pattern=north east lines, pattern color=black!45] (8,-8) rectangle (6,-6);
    \tdot{2}{-6}{blue}
    \tdot{4}{-8}{blue}
    \tdot{6}{-2}{blue}
    \tdot{8}{-4}{blue}
    \node at (2,1) {$i$};
    \node at (4,1) {$j$};
    \node at (6,1) {$k$};
    \node at (8,1) {$\ell$};
    \node at (-1,-2) {$\pi_k$};
    \node at (-1,-4) {$\pi_{\ell}$};
    \node at (-1,-6) {$\pi_i$};
    \node at (-1,-8) {$\pi_j$};
  \end{tikzpicture}
  \caption{Illustrating the unique $3412$ pattern in $\pi \in B(n,3)$ using mesh patterns.\label{fig:3412_B(n,3)_mesh}}
\end{figure}

\begin{remark}\label{rem:construct_cnats_B(n,3)_pattern}
We can directly construct the three CNATs corresponding to the permutation $\pi$ based on the $3412$ pattern of Figure~\ref{fig:3412_B(n,3)_mesh}. The construction is similar to that of Figure~\ref{fig:B(n,2)_quad_cond}. Namely, we define three cells $c := (j,\pi_{\ell}), c_{\mathrm{left}} := (i,\pi_{\ell}), c_{\mathrm{up}} := (j,\pi_k)$. Deleting the dots $(j, \pi_j)$ and $(\ell, \pi_{\ell})$ from the permutation $\pi$ yields a permutation $\pi'$ which has a single CNAT $T'$ (the permutation graph $G_{\pi'}$ is acyclic, since $\pi'$ avoids $321$ and $3412$)\footnote{As in Proof of Lemma~\ref{lem:B(n,2)_quad_cond} we are slightly abusive here, since we need to take care not to disconnect the permutation graph.}. The three CNATs associated with $T$ are then constructed from $T'$ by adding dots in any two of the three cells $c = (j, \pi_{\ell}),\ c_{\mathrm{left}} = (i, \pi_{\ell}),\ c_{\mathrm{up}} = (j, \pi_k)$, as in Figure~\ref{fig:B(n,2)_quad_cond}.
\end{remark}

As in Section~\ref{subsec:B(n,1)_B(n+1,2)}, it is possible to give a characterisation of $B(n,3)$ in terms of quadrants. We state the result without proof here, since the proof involves a case-by-case study of the numerous possibilities where a permutation does \emph{not} satisfy the stated condition on its quadrants. Indeed, we would have to consider a permutation with two separate values $k, k'$ having two dots in their lower-left and upper-right quadrants, and there are many possibilities for where these dots are located. We consider that such a proof does not add any real value to this paper, so prefer leaving it as an exercise to the dedicated reader.

\begin{proposition}\label{pro:characterisation_B(n,3)_quadrants}
Let $\pi$ be an $n$-permutation for some $n \geq 4$. Then $\pi$ has exactly $3$ CNATs if, and only, if, the following three conditions are all satisfied.
\begin{enumerate}
\item There exists a unique value of $k \in \{2, \ldots, n\}$ such that the lower-left and upper-right $k$-quadrants of $\Pi$ have exactly two dots.
\item All other lower-left and upper-right $k'$-quadrants, for $k' \neq k$, have a single dot.
\item The permutation $\pi$ has no fixed point.
\end{enumerate}
\end{proposition}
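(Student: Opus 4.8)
The plan is to deduce this characterisation from the pattern descriptions of $B(n,1)$, $B(n,2)$ and $B(n,3)$ (Propositions~\ref{pro:B(n,1)_quadrants}, \ref{pro:B(n,2)_characterisation} and \ref{pro:pattern_B(n,3)}), by translating between quadrant sizes and the cycle structure of the permutation graph $G_{\pi}$. The bridge is the following elementary count: for each $k \in \{2, \cdots, n\}$, the number of edges of $G_{\pi}$ that \emph{straddle} $k$ --- that is, inversions $\{p,q\}$ with $p < q$, $p < k \leq \pi_p$, and $\pi_q < k \leq q$ --- equals $\left\vert \Pi_{<k, \geq k} \right\vert \cdot \left\vert \Pi_{\geq k, <k} \right\vert = \left\vert \Pi_{<k, \geq k} \right\vert^2$. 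Indeed, any dot of the lower-left $k$-quadrant and any dot of the upper-right $k$-quadrant automatically lie in relative position ``left and above'', hence form an inversion, and conversely every straddling edge arises this way. Thus conditions (1)--(2) say precisely that exactly one threshold $k_0$ is straddled by four edges --- which form a complete bipartite graph $K_{2,2}$, i.e.\ a $4$-cycle, on the two lower-left and two upper-right dots at $k_0$ --- while every other threshold is straddled by a single edge.

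\medskip
\noindent\emph{Forward direction.} Suppose $\cnat[\pi] = 3$. By Proposition~\ref{pro:pattern_B(n,3)}, $\pi$ avoids $321$ and has a unique occurrence $\pi_i, \pi_j, \pi_k, \pi_{\ell}$ of $3412$ with $k = j+1$; moreover $\pi$ has no fixed point, since a fixed point would realise $\pi$ as an $\Insert[\cdot]$-image and force $\cnat[\pi] \in \{2\} \cup \{4, 5, \cdots\}$ by Lemma~\ref{lem:k_Insert_2k}. This gives (3). Every permutation satisfying the quadrant condition lies in $B(n,1) \sqcup B(n,2)$ or has at least two fixed points (in which case it has at least four CNATs), so $\pi$ violates the quadrant condition and some threshold $k_0$ is doubled. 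The four edges straddling $k_0$ form a $4$-cycle of $G_{\pi}$; since $G_{\pi}$ is a decorated $4$-cycle (from the proof of Proposition~\ref{pro:pattern_B(n,3)}) it has a \emph{unique} cycle, so those four edges are exactly the edges of that cycle, one of which is $\{\pi_j, \pi_k\}$. As $\pi_j$ sits in column $j$ and $\pi_k$ in column $k = j+1$, the edge $\{\pi_j, \pi_k\}$ straddles exactly the threshold interval $\left[ \max(j+1, \pi_k + 1), \min(\pi_j, j+1) \right]$, which is non-empty only when it reduces to $\{j+1\}$; hence $k_0 = j+1$ is the unique doubled threshold. Finally, a quadrant of size $\geq 3$ is impossible, for its straddling edges would contain $K_{3,3}$ and hence more than one cycle; so every remaining quadrant has size exactly $1$, giving (1)--(2).

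\medskip
\noindent\emph{Converse.} Assume (1)--(3). Since $\pi$ violates the quadrant condition, Propositions~\ref{pro:B(n,1)_quadrants} and \ref{pro:B(n,2)_characterisation} give $\pi \notin B(n,1) \cup B(n,2)$, so $\cnat[\pi] \geq 3$. Let $k_0$ be the doubled threshold, with lower-left dots $d_1, d_2$ (columns $p_1 < p_2 < k_0$, values $\geq k_0$) and upper-right dots $d_1', d_2'$ (columns $\geq k_0$, values $< k_0$); their four connecting edges form a $4$-cycle $C$ of $G_{\pi}$. It remains to show that $G_{\pi}$ has no cycle other than $C$: granting this, $C$ is chordless (hence an induced $4$-cycle, so $\pi$ contains $3412$), $\Prune[G_{\pi}] = C_4$, and therefore $\cnat[\pi] = \minrec[G_{\pi}] = \minrec[C_4] = 3$ by Lemmas~\ref{lem:tree_pruning_minrec} and \ref{lem:minrec_dec_cycle}. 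Suppose for contradiction that $G_{\pi}$ contains a further cycle, necessarily of length $3$ or $4$ by Proposition~\ref{pro:perm_patterns_cycles}. One then checks, over the possible positions of the vertices of such a cycle relative to $d_1, d_2, d_1', d_2'$, that it always produces a second doubled threshold or inflates the $k_0$-quadrant to size $\geq 3$, contradicting (1)--(2); condition (3) is what excludes the remaining degenerate configurations --- those in which $G_{\pi}$ is obtained from a smaller permutation graph through an $\Insert[\cdot]$-type operation. This last case analysis is the crux, and the bookkeeping it requires is the reason we have stated the proposition without proof.
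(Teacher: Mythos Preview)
The paper does not prove this proposition at all: it is explicitly ``stated without proof'', with the authors remarking that a proof would require a case-by-case study of permutations violating the quadrant conditions. Your proposal therefore goes well beyond the paper.

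Your forward direction is essentially complete and correct. The ``straddling edges'' bridge --- that the edges of $G_\pi$ joining a lower-left $k$-dot to an upper-right $k$-dot form a $K_{m,m}$ where $m = \left\vert \Pi_{<k,\geq k} \right\vert$ --- is a clean device the paper does not articulate, and it makes the deduction from Proposition~\ref{pro:pattern_B(n,3)} transparent: a decorated $4$-cycle has a unique cycle, so any $K_{m,m}$ with $m\geq 2$ must \emph{be} that cycle, forcing $m=2$; and the edge $\{\pi_j,\pi_k\}$ of that cycle, lying in consecutive columns $j,j+1$, pins the unique doubled threshold to $k_0=j+1$. Two small points: the absence of a fixed point follows even more directly from $321$-avoidance plus irreducibility (a fixed point in an irreducible permutation always sits in the middle of a $321$), and your write-up would read more smoothly if the exclusion of $m\geq 3$ were placed before you speak of ``the four edges''.

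Your converse, like the paper's, is left incomplete: you reduce to showing that under (1)--(3) the graph $G_\pi$ has no cycle besides the $K_{2,2}$ at $k_0$, sketch why an extra $3$- or $4$-cycle should force a second doubled threshold or a quadrant of size $\geq 3$, and then explicitly defer the bookkeeping. That matches the paper's decision. Your framing does clarify the role of condition~(3): without it one can have, e.g., $\pi = 42513$, which satisfies (1)--(2) with $k_0=4$ but has the fixed point $2$ creating a $321$ pattern (hence an extra $3$-cycle) and $\cnat[\pi] \geq 6$. So condition~(3) is genuinely needed, and your identification of it with the $\Insert[\cdot]$-degeneracies is on target.
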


With Propositions~\ref{pro:pattern_B(n,2)} and \ref{pro:pattern_B(n,3)}, we are now equipped to define the bijection between $B(n,2)$ and $B(n+1,3)$. Let $\pi \in B(n,2)$ be an $n$-permutation with $2$ CNATs, and suppose that $\pi_i, \pi_j = j, \pi_k$ is the unique occurrence of the pattern $321$ in $\pi$, with $i < j < k$. We first define an $(n+1)$-permutation $\tilde{\pi}$ by $\tilde{\pi} := \Insert[j+1](\pi)$. Since $\pi$ has a unique occurrence of the pattern $321$, given by $\pi_i, \pi_j = j, \pi_k$, it follows by construction that $\tilde{\pi}$ has a unique occurrence of the pattern $4231$, given by $\tilde{\pi}_i = \pi_i + 1, \tilde{\pi}_j = j, \tilde{\pi}_{j+1} = j+1, \tilde{\pi}_{k+1} = \pi_k$. Moreover, by construction, $\tilde{\pi}$  has exactly two occurrences of $321$, given by $\tilde{\pi}_i, \tilde{\pi}_j, \tilde{\pi}_{k+1}$ and $\tilde{\pi}_i, \tilde{\pi}_{j+1}, \tilde{\pi}_{k+1}$. We also claim that $\tilde{\pi}$ avoids $3412$. Indeed, since $\pi$ avoids $3412$, the only way the $3412$ pattern could occur in $\tilde{\pi}$ is by using both $\tilde{\pi}_j = j$ and $\tilde{\pi}_{j+1} = j+1$. This means that $\tilde{\pi}_j \tilde{\pi}_{j+1}$ must be either the $34$ or the $12$ part of the $3412$ pattern. But if it were the $34$ part, then $\pi_i \pi_j x$ could be completed into two occurrences of $321$ in $\pi$ by setting $x$ to be the $1$ or the $2$ in the $3412$ occurrence in $\tilde{\pi}$ (these are the same in $\pi$ and $\tilde{\pi}$). Similarly, if $\tilde{\pi}_j \tilde{\pi}_{j+1}$ were the $12$ part of the $3412$ occurrence in $\tilde{\pi}$, then $y \pi_j \pi_k$ could be completed into two occurrences of $321$ in $\pi$ by setting $y$ to be the $3$ or the $4$ in the $3412$ occurrence in $\tilde{\pi}$. In both cases this contradicts the fact that $\pi$ contains a unique occurrence of $321$, and so the claim that $\tilde{\pi}$ avoids $3412$ is proved. We then define a permutation $\pi'$, obtained from $\tilde{\pi}$ by changing the $4231$ pattern to a $3412$ pattern (on the same elements), and leaving all other elements unchanged. More formally, we have:
\beq\label{eq:def_bij_B(n,2)_B(n,3)}
\begin{cases}
\pi'_i = \tilde{\pi}_{j+1} = j+1 \\
\pi'_j = \tilde{\pi}_i = \pi_i + 1 \\
\pi'_{j+1} = \tilde{\pi}_{k+1} = \pi_k \\
\pi'_{k+1} = \tilde{\pi}_j = j \\
\pi'_{\ell} = \tilde{\pi}_{\ell} \qquad \text{for all other values } \ell
\end{cases}.
\eeq

\begin{theorem}\label{thm:biject_B(n,2)toB(n+1,3)}
Let $n \geq 3$. For a permutation $\pi \in B(n,2)$, define $\pi'$ as in Equation~\eqref{eq:def_bij_B(n,2)_B(n,3)}, where $\pi_i, \pi_j = j, \pi_k$ is the unique occurrence of the pattern $321$ in $\pi$, and $\tilde{\pi} := \Insert[j+1](\pi)$. Then the map $\pi \mapsto \pi'$ is a bijection from $B(n,2)$ to $B(n+1,3)$.
\end{theorem}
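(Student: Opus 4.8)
The plan is to exhibit an explicit two‑sided inverse to $\pi\mapsto\pi'$ and to check that $\pi\mapsto\pi'$ and that inverse send each of $B(n,2)$, $B(n+1,3)$ into the other. Throughout I would rely on the pattern characterisations of Propositions~\ref{pro:pattern_B(n,2)} and~\ref{pro:pattern_B(n,3)}, passing to the permutation graph and its number $\minrec[\cdot]$ of minimal recurrent configurations via Theorem~\ref{thm:cnat_minrec} whenever that is cleaner than direct pattern bookkeeping.

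\emph{Forward direction: $\pi\in B(n,2)\Rightarrow\pi'\in B(n+1,3)$.} Let $\pi_i,\pi_j=j,\pi_k$ be the unique $321$ of $\pi$ (so $\pi_i>j>\pi_k$), and recall $G_\pi$ is a decorated $3$-cycle whose core triangle is on $\{\pi_i,j,\pi_k\}$. First I would analyse $\tilde\pi=\Insert[j+1](\pi)$: since $\pi$ satisfies the quadrant condition (Proposition~\ref{pro:B(n,2)_characterisation}), the lower-left and upper-right $j$-quadrants each contain a single dot, and this forces the inserted vertex $j+1$ to be adjacent in $G_{\tilde\pi}$ to exactly $\pi_i+1$ and $\pi_k$; hence $\Prune[G_{\tilde\pi}]$ is $K_4$ minus the edge $(j,j+1)$ on the four ``active'' vertices $\{j,j+1,\pi_i+1,\pi_k\}$, everything else being tree branches (equivalently, $\tilde\pi$ has exactly the two $321$ occurrences listed before the theorem and avoids $3412$). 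Now $\pi'$ is obtained from $\tilde\pi$ by rearranging the four entries in the active columns $i,j,j+1,k+1$ from a $4231$ into a $3412$. I would then check the two effects of this purely local move: (a) the induced subgraph of $G_{\pi'}$ on the active vertices becomes the $4$-cycle $(\pi_i+1)-j-(j+1)-\pi_k-(\pi_i+1)$, directly because $\pi'$ now realises the pattern $3412$ on those columns and $G_{3412}=C_4$; and (b) the edges among the remaining vertices are untouched while edges between a non‑active and an active vertex can only ``swap'' — in particular no non‑active vertex acquires two active neighbours, since such a vertex would already have lain on a cycle of $G_{\tilde\pi}$, hence in its core $K_4-e$, which it does not. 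Therefore $\Prune[G_{\pi'}]=C_4$, so $\cnat[\pi']=\minrec[C_4]=3$ by Theorem~\ref{thm:cnat_minrec} and Lemmas~\ref{lem:tree_pruning_minrec}, \ref{lem:minrec_dec_cycle}; equivalently $\pi'$ has a unique $3412$ and avoids $321$, so $\pi'\in B(n+1,3)$ by Proposition~\ref{pro:pattern_B(n,3)}. The contiguity structure demanded there is automatic, as by construction $\pi'_i=j+1=\pi'_{k+1}+1$ and columns $j,j+1$ are adjacent.

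\emph{Inverse direction.} Given $\sigma\in B(n+1,3)$, let $\sigma_i,\sigma_j,\sigma_k,\sigma_\ell$ ($i<j<k<\ell$) be its unique $3412$, so $k=j+1$ and $\sigma_i=\sigma_\ell+1$ by Proposition~\ref{pro:pattern_B(n,3)}. The key lemma I would prove is $\sigma_i=j+1$ (equivalently $\sigma_\ell=j$): for every integer $m$ with $j+1\le m\le\sigma_i$ the dots in columns $i$ and $j$ both lie in the lower-left $m$-quadrant of $\sigma$ (their columns are $<m$ and their rows $\sigma_j>\sigma_i\ge m$), so by Proposition~\ref{pro:characterisation_B(n,3)_quadrants} — which says exactly one value of $m$ has a lower-left quadrant with more than one dot — the interval $\{j+1,\dots,\sigma_i\}$ is a singleton; this can alternatively be done by a short pattern case analysis showing that any column $m>j+1$, $m\ne\ell$, with $\sigma_m<\sigma_\ell$ creates a second $3412$ or a $321$. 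With $\sigma_i=j+1$, replacing the $3412$ by a $4231$ on columns $i,j,j+1,\ell$ yields a permutation $\tilde\sigma$ with $\tilde\sigma_{j+1}=j+1$ a fixed point; set $\sigma^{-}:=\Insert[j+1]^{-1}(\tilde\sigma)$. Running the active‑column analysis in reverse — $\Prune[G_{\tilde\sigma}]=K_4-e$ collapses to the triangle $C_3$ once the degree‑$2$ vertex $j+1$, which carries no tree branch by the same quadrant bookkeeping, is removed — gives $\Prune[G_{\sigma^{-}}]=C_3$, i.e.\ $\sigma^{-}\in B(n,2)$. Finally I would verify that $\pi\mapsto\pi'$ and $\sigma\mapsto\sigma^{-}$ are mutually inverse: starting from $\pi$, the composite recovers it because $\pi'$ has its unique $3412$ precisely on columns $(i,j,j+1,k+1)$ and $\tilde\pi=\Insert[j+1](\pi)$ has its fixed point at column $j+1$; starting from $\sigma$, it recovers it because $\sigma^{-}$ has its unique $321$ on the matching columns and $\Insert[j+1]$ reverses the deletion.

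\emph{Main obstacle.} The delicate part is step (b) above — controlling the \emph{global} effect on the permutation graph (equivalently, on the complete list of $321$ and $3412$ occurrences) of the purely local rearrangement $4231\leftrightarrow 3412$ on the four active columns — together with the key lemma $\sigma_i=j+1$ in the inverse direction; once these are in hand the remaining verifications are routine bookkeeping.
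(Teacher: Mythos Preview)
Your overall strategy is sound and genuinely different from the paper's: you work through the permutation graph and the ASM machinery (showing $\Prune[G_{\pi'}]=C_4$), whereas the paper argues directly on the permutation via a block decomposition of its plot into regions $A,A_1,A_2,B,B_1,B_2$ and checks by hand that the $4231\to 3412$ move creates no new $321$ or $3412$. Your observation that $j+1$ has exactly the two neighbours $\pi_i+1$ and $\pi_k$ in $G_{\tilde\pi}$ (via the quadrant condition), and your key lemma $\sigma_i=j+1$ for the inverse (via Proposition~\ref{pro:characterisation_B(n,3)_quadrants}), are both correct and are pleasant structural insights the paper does not isolate.

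The one genuine gap is your justification of step~(b). The inference ``if a non-active vertex $v$ has two active neighbours in $G_{\pi'}$ then $v$ would already lie on a cycle of $G_{\tilde\pi}$'' is not valid as stated: adjacencies of $v$ to the active values are computed from the \emph{column} positions of those values, and those positions change under the $4231\to3412$ rearrangement, so two active neighbours in $G_{\pi'}$ do not automatically give two in $G_{\tilde\pi}$. In fact the number of active neighbours of $v$ is \emph{not} preserved in general (e.g.\ a hypothetical dot with column in $(i,j)$ and row in $(j+1,\pi_i+1)$ would have four active neighbours in $G_{\tilde\pi}$ but only two in $G_{\pi'}$); what saves you is that all such discrepant positions are ruled out by $\Prune[G_{\tilde\pi}]=K_4-e$. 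To make (b) rigorous you must locate the admissible regions for a non-active dot $(c,r)$ --- one finds $r<\pi_k$ when $i<c<j$, and $r>\pi_i+1$ when $j+1<c<k+1$, with the ``exterior'' cases $c<i$ or $c>k+1$ unconstrained --- and then verify that in each admissible region the single active neighbour persists (possibly swapped) in $G_{\pi'}$. You also need the slightly stronger statement that each \emph{tree} of the non-active forest meets the active set in at most one edge in $G_{\pi'}$, not just each vertex. That region analysis is exactly the paper's block decomposition in graph-theoretic clothing; once you write it out, both directions go through, but the one-line reason you give does not.

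A minor point: in the inverse you lean on Proposition~\ref{pro:characterisation_B(n,3)_quadrants}, which the paper states but explicitly declines to prove; your parenthetical ``short pattern case analysis'' is the safer route and is what the paper's block picture effectively does.
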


The construction $\pi \mapsto \pi'$ is illustrated in Figure~\ref{fig:B(n,2)_B(n+1,3)} below. On the left (Figure~\ref{fig:B(n,2)_321}) we have the occurrence $\pi_i, \pi_j = j, \pi_k$ of $321$ in the permutation $\pi \in B(n,2)$. On the right (Figure~\ref{fig:B(n+1,3)_3412}) we have the occurrence $\pi'_i = \pi_j+1 = j+1, \pi'_j = \pi_i + 1, \pi'_{j+1} = \pi_k, \pi'_{k+1} = \pi_j = j$ of $3412$ in $\pi'$. The other dots of $\pi$ are unchanged in $\pi'$, after the necessary shifting downwards and/or to the right to create a permutation.

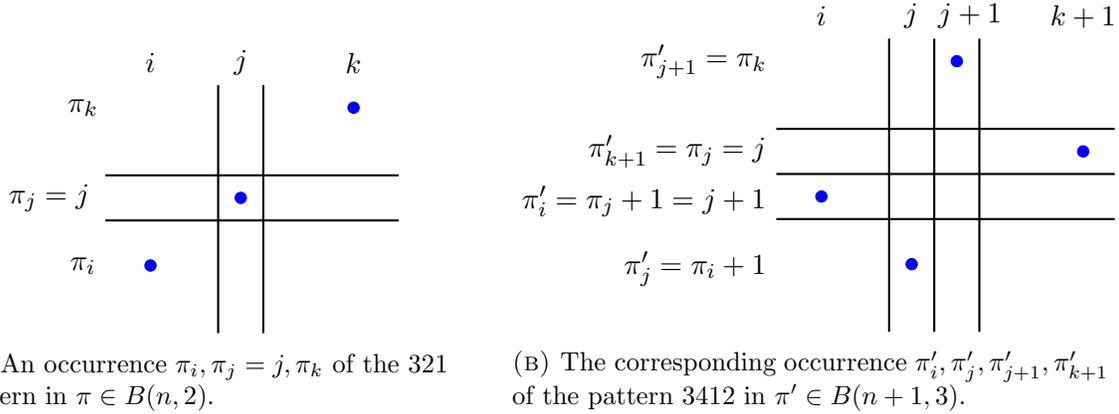
\begin{figure}[ht]
\centering
  \hspace{-0.4cm}
  \begin{subfigure}[b]{0.4\textwidth}
    \centering
    \begin{tikzpicture}[scale=0.3]
    \draw [thick] (0,-4)--(13,-4);
    \draw [thick] (0,-6)--(13,-6);
    \draw [thick] (5,0)--(5,-11);
    \draw [thick] (7,0)--(7,-11);
    \tdot{2}{-8}{blue}
    \tdot{6}{-5}{blue}
    \tdot{11}{-1}{blue}
    \node at (2,1) {$i$};
    \node at (6,1) {$j$};
    \node at (11,1) {$k$};
    \node at (-1,-1) {$\pi_k$};
    \node at (-2.5,-5) {$\pi_j=j$};
    \node at (-1,-8) {$\pi_i$};
    \end{tikzpicture}
  \caption{An occurrence $\pi_i, \pi_j = j, \pi_k$ of the $321$ pattern in $\pi \in B(n,2)$.\label{fig:B(n,2)_321}}
  \end{subfigure}
  \hspace{0.6cm}
  \begin{subfigure}[b]{0.48\textwidth}
    \centering
    \begin{tikzpicture}[scale=0.3]
    \draw [thick] (0,-4)--(15,-4);
    \draw [thick] (0,-6)--(15,-6);
    \draw [thick] (0,-8)--(15,-8);
    \draw [thick] (5,0)--(5,-13);
    \draw [thick] (7,0)--(7,-13);
    \draw [thick] (9,0)--(9,-13);
    \tdot{2}{-7}{blue}
    \tdot{6}{-10}{blue}
    \tdot{8}{-1}{blue}
    \tdot{13.6}{-5}{blue}
    \node at (2,1) {$i$};
    \node [xshift=-0.2] at (6,1) {$j$};
    \node [xshift=5] at (8,1) {$j+1$};
    \node at (13.6,1) {$k+1$};
    \node [left] at (0,-1) {$\pi'_{j+1} = \pi_k$};
    \node [left] at (0,-5) {$\pi'_{k+1} = \pi_{j} = j$};
    \node [left] at (0,-7.2) {$\pi'_i = \pi_j + 1 = j+1$};
    \node [left] at (0,-10.2) {$\pi'_j = \pi_i + 1$};
    \end{tikzpicture}
  \caption{The corresponding occurrence $\pi'_i, \pi'_j, \pi'_{j+1}, \pi'_{k+1}$ of the pattern $3412$ in $\pi' \in B(n+1,3)$.\label{fig:B(n+1,3)_3412}}
  \end{subfigure}

\caption{Illustrating the bijection $\pi \mapsto \pi'$ between $B(n,2)$ and $B(n+1,3)$: the $321$ pattern of $\pi \in B(n,2)$ is mapped to a $3412$ pattern in $\pi'$.\label{fig:B(n,2)_B(n+1,3)}}
\end{figure}

Theorem~\ref{thm:biject_B(n,2)toB(n+1,3)} answers in the affirmative Conjecture~6.4 in~\cite{CO}. Combining with Corollary~\ref{cor:b(n,2)} gives the following.

\begin{corollary}\label{cor:b(n,3)}
For any $n \geq 3$, we have $b(n,3) = (n-3)\cdot 2^{(n-4)}$.
\end{corollary}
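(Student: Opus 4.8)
The plan is to obtain the formula as an immediate consequence of Theorem~\ref{thm:biject_B(n,2)toB(n+1,3)} together with the enumeration of $B(n,2)$ recorded in Corollary~\ref{cor:b(n,2)}. First I would note that Theorem~\ref{thm:biject_B(n,2)toB(n+1,3)} supplies, for every $n \geq 3$, a bijection from $B(n,2)$ onto $B(n+1,3)$, whence $b(n+1,3) = b(n,2)$. Combining this with $b(n,2) = (n-2)\cdot 2^{(n-3)}$ from Corollary~\ref{cor:b(n,2)} gives $b(n+1,3) = (n-2)\cdot 2^{(n-3)}$ for all $n \geq 3$, and re-indexing via $m := n+1$ yields $b(m,3) = (m-3)\cdot 2^{(m-4)}$ for all $m \geq 4$.

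It then remains to verify the base case $m = 3$, for which the claimed formula predicts $b(3,3) = 0$. Here I would simply list the irreducible $3$-permutations, namely $231$, $312$, and $321$: the permutation graphs $G_{231}$ and $G_{312}$ are paths, hence trees, so each of $231$ and $312$ has a single CNAT by Lemma~\ref{lem:minrec_tree} and Theorem~\ref{thm:cnat_minrec}, while $G_{321} = K_3$ is the $3$-cycle, which has two minimal recurrent configurations by Lemma~\ref{lem:minrec_dec_cycle}, so that $\cnat[321] = 2$ by Theorem~\ref{thm:cnat_minrec}. Hence no $3$-permutation has three associated CNATs, i.e.\ $B(3,3) = \emptyset$ and $b(3,3) = 0$, in agreement with the formula.

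As the whole argument is a short deduction from results already in hand, there is no genuine obstacle here; the substantive work lay in establishing Theorem~\ref{thm:biject_B(n,2)toB(n+1,3)} and Corollary~\ref{cor:b(n,2)}. The only mild subtlety is index bookkeeping — the bijection increases the permutation length by one, so it directly produces the formula only for $m \geq 4$, and the value $m = 3$ has to be checked separately, as above.
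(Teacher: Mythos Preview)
Your proposal is correct and follows exactly the route the paper takes: the corollary is stated immediately after Theorem~\ref{thm:biject_B(n,2)toB(n+1,3)} with the one-line justification ``Combining with Corollary~\ref{cor:b(n,2)} gives the following'', and no further proof is given. Your explicit verification of the boundary case $n=3$ is a welcome addition that the paper leaves implicit.
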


\begin{proof}[Proof of Theorem~\ref{thm:biject_B(n,2)toB(n+1,3)}]
Let $\pi \in B(n,2)$. By construction $\pi'$ is an $(n+1)$-permutation. We first show that $\pi' \in B(n+1,3)$, using Proposition~\ref{pro:pattern_B(n,3)}. We first refine the construction of Figure~\ref{fig:B(n,2)_B(n+1,3)}, using the mesh pattern of $\pi$ illustrated in Figure~\ref{fig:321_B(n,2)_mesh}. For convenience, we represent this as a \emph{block decompositions} of the permutations $\pi$ and $\pi'$, as illustrated in Figure~\ref{fig:block_decomp_perm} below. The unlabelled blocks correspond to the shaded regions of the mesh patterns, and are therefore empty.

Now the dots in the upper-left block $A$ cannot form a permutation (since $\pi$ is indecomposable). Therefore if $A$ is non-empty, then $A_1$ or $A_2$ must also be non-empty. However, if there is a dot in $A_1$ and a dot in $A_2$, this would form an occurrence of $3412$ together with $\pi_i$ and $\pi_k$. So at most one of these could be non-empty. Similarly, if the block $B$ is non-empty, then one of $B_1, B_2$ must be non-empty, and moreover at most one of these can be non-empty (even if $B$ is empty). One can check that these block conditions prevent any occurrences of $321$ or $3412$ in $\pi'$, other than the $3412$ already present. By Proposition~\ref{pro:pattern_B(n,3)} we therefore have $\pi' \in B(n+1, 3)$ as desired.

\begin{figure}[ht]
\centering
    \begin{tikzpicture}[scale=0.3]
    \foreach \yy in {-1, -5, -9}
       \draw [thick] (-1.5,\yy)--(13.5,\yy);
    \foreach \xx in {2, 6, 10}
       \draw [thick] (\xx,2)--(\xx,-12);
    \tdot{2}{-9}{blue}
    \tdot{6}{-5}{blue}
    \tdot{10}{-1}{blue}
    \node [above] at (2,2) {$i$};
    \node [above] at (6,2) {$j$};
    \node [above] at (10,2) {$k$};
    \node [left] at (-1.5,-1) {$\pi_k$};
    \node [left] at (-1.5,-5) {$\pi_j=j$};
    \node [left] at (-1.5,-9) {$\pi_i$};
    \node at (0.25, 0.5) {\LARGE{$A$}};
    \node at (0.25, -3) {\LARGE{$A_1$}};
    \node at (4, 0.5) {\LARGE{$A_2$}};
    \node at (11.75, -10.5) {\LARGE{$B$}};
    \node at (8, -10.5) {\LARGE{$B_1$}};
    \node at (11.75, -7) {\LARGE{$B_2$}};
   
   \begin{scope}[shift={(22,0)}]
    \foreach \yy in {-1, -5, -7, -11}
       \draw [thick] (-1.5,\yy)--(15.5,\yy);
    \foreach \xx in {2, 6, 8, 12}
       \draw [thick] (\xx,2)--(\xx,-14);
    \tdot{2}{-7}{blue}
    \tdot{6}{-11}{blue}
    \tdot{8}{-1}{blue}
    \tdot{12}{-5}{blue}
    \node [above] at (2,2) {$i$};
    \node [above, xshift=-0.2] at (6,2) {$j$};
    \node [above, xshift=5] at (8,2) {$j+1$};
    \node [above] at (12,2) {$k$};
    \node [left] at (-1.5,-1) {$\pi_k$};
    \node [left] at (-1.5,-5) {$j$};
    \node [left] at (-1.5,-7) {$j+1$};
    \node [left] at (-1.5,-11) {$\pi_i + 1$};
    \node at (0.25, 0.5) {\LARGE{$A$}};
    \node at (0.25, -3) {\LARGE{$A_1$}};
    \node at (4, 0.5) {\LARGE{$A_2$}};
    \node at (13.75, -12.5) {\LARGE{$B$}};
    \node at (10, -12.5) {\LARGE{$B_1$}};
    \node at (13.75, -9) {\LARGE{$B_2$}};
    \end{scope}
    
    \end{tikzpicture}

\caption{The block decompositions of $\pi \in B(n,2)$ (left) and $\pi' \in B(n+1,3)$ (right). Unlabelled blocks are empty.\label{fig:block_decomp_perm}}
\end{figure}
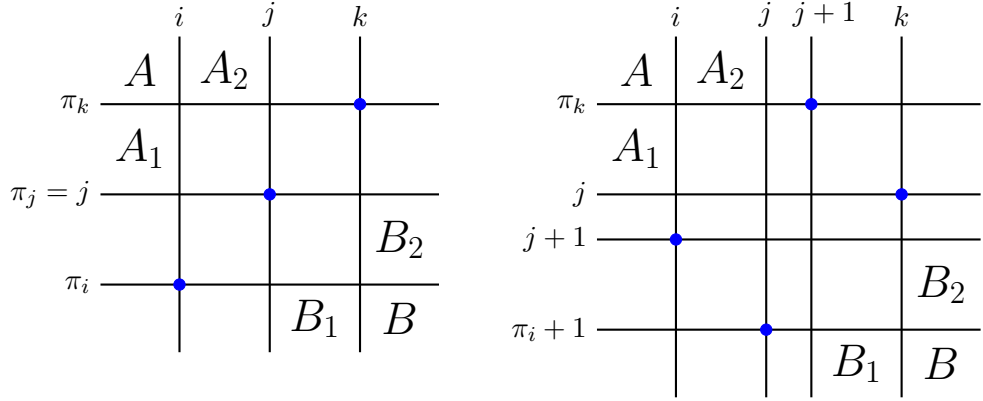

The fact that the map $\pi \mapsto \pi'$ is a bijection is straightforward. Indeed, if $\pi' \in B(n+1, 3)$, then by Proposition~\ref{pro:pattern_B(n,3)} $\pi'$ avoids $321$ and contains a unique occurence $\pi'_i, \pi'_j, \pi'_{j+1}, \pi'_{\ell}$ of $3412$ with $\pi'_i = \pi'_{\ell} + 1$ (see Figure~\ref{fig:3412_B(n,3)_mesh}). We define the corresponding permutation $\pi$ by first replacing this occurrence with an occurrence of $4231$ (i.e.\ $\tilde{\pi}_i = \pi'_j, \tilde{\pi}_j = \pi'_{\ell}, \tilde{\pi}_{j+1} = \pi'_{\ell} + 1, \tilde{\pi}_{\ell} = \pi'_{j+1}$) and then ``contracting'' the two middle dots (with suitable relabelling of the other elements). As above, by considering the block decompositions and using Proposition~\ref{pro:pattern_B(n,2)}, we can show that $\pi$ is in $B(n,2)$, as desired. This concludes the proof.
\end{proof}

\subsection{Permutations with $5$ CNATs}\label{subsec:B(n,5)}

In~\cite[Conjecture~6.5]{CO}, it was conjectured that there are no permutations with exactly $5$ CNATs. We answer this conjecture in the affirmative.

\begin{theorem}\label{thm:b(n,5)=0}
For any $n \geq 1$, we have $b(n,5) = 0$. That is, there are no permutations (of any length) with exactly $5$ CNATs.
\end{theorem}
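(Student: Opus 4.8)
The plan is to transport everything to the sandpile side via Theorem~\ref{thm:cnat_minrec} and then exploit the fact that $5$ is prime. By Lemma~\ref{lem:tree_pruning_minrec} we have $\cnat[\pi] = \minrec[G_{\pi}] = \minrec[H]$, where $H := \Prune[G_{\pi}]$. If $H$ is a single vertex then $\cnat[\pi] = 1 \neq 5$ and we are done, so assume $H$ has minimum degree at least $2$; note that $H$ is an induced subgraph of $G_{\pi}$, hence itself a permutation graph. Next I would show that $\minrec$ is multiplicative over blocks: if $v$ is a cut vertex of $H$ with $H = H_1 \cup H_2$ and $H_1 \cap H_2 = \{v\}$, then taking $v$ as the sink, an orientation of $H$ is $v$-rooted acyclic if and only if its restrictions to $H_1$ and $H_2$ are (any directed cycle lies inside one $H_i$ since it passes through $v$ at most once, and $v$ is the unique target of $H$ exactly when it is the unique target of both $H_i$). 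Hence $\AO{v}{H} \cong \AO{v}{H_1} \times \AO{v}{H_2}$, so $\minrec[H] = \minrec[H_1]\cdot \minrec[H_2]$ by Proposition~\ref{pro:minrec_acor} and the sink-independence of $\minrec$. Iterating over the block tree gives $\minrec[H] = \prod_i \minrec[B_i]$, the product over the $2$-connected blocks of $H$ (bridge-blocks are single edges contributing a factor $1$). Each $2$-connected block contains a cycle, hence has $\minrec[B_i] \geq 2$ by Lemma~\ref{lem:minrec_tree}. Since $5$ is prime, $\cnat[\pi] = 5$ would force $H$ to have exactly one non-bridge block $B$, with $\minrec[B] = 5$. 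Thus it suffices to prove that no $2$-connected permutation graph $B$ has $\minrec[B] = 5$.

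To establish this I would split on the cyclomatic number $\mu(B) = |E(B)| - |V(B)| + 1$, using throughout that a permutation graph induces no cycle of length $\geq 5$ (Proposition~\ref{pro:perm_patterns_cycles}(3)). If $\mu(B) = 1$ then $B$ is a cycle $C_k$ with $k \leq 4$, so $\minrec[B] = k-1 \leq 3$ by Lemma~\ref{lem:minrec_dec_cycle}. If $\mu(B) = 2$ then $B$ is a theta graph $\Theta_{a,b,c}$ (two vertices joined by three internally disjoint paths of edge-lengths $a \leq b \leq c$), and the no-long-induced-cycle condition forces $(a,b,c) \in \{(1,2,2),(1,2,3),(1,3,3),(2,2,2)\}$, i.e. $B$ is the diamond, the ``house'', two $4$-cycles glued along an edge, or $K_{2,3}$; a direct count of $s$-rooted acyclic orientations then gives $\minrec[B] \in \{4,6,9,7\}$ respectively, never $5$. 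If $\mu(B) \geq 3$, I would show $B$ contains $K_4$ or $K_{2,3}$ as an (induced) subgraph. Since $\mu(B) \geq 3$, either $B$ is not series-parallel and therefore contains a $K_4$-subdivision — but the induced-cycle restriction allows subdividing only pairwise non-adjacent edges of $K_4$ (subdividing two edges of a common triangle would create an induced $C_5$), which forces $K_4$ itself or $K_{2,3}$ to occur as a subgraph — or $B$ is series-parallel with $\mu(B) \geq 3$, in which case a parallel decomposition yields three internally disjoint paths between two vertices, and the induced-cycle restriction collapses these to a $K_{2,3}$ (or a $K_4$) subgraph. In either situation, since $K_4 = G_{4321}$ and $K_{2,3} = G_{34512}$, and $B$ is an induced subgraph of $G_{\pi}$, the permutation $\pi$ contains the pattern $4321$ or $34512$, so Lemma~\ref{lem:cnat_pattern} gives $\cnat[\pi] \geq \minrec[K_4] = 6$ or $\cnat[\pi] \geq \minrec[K_{2,3}] = 7$; in particular $\minrec[B] \neq 5$. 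Combining the three cases completes the proof.

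The main obstacle is the case $\mu(B) \geq 3$: one must describe the possible shapes of a $2$-connected permutation graph with at least three independent cycles precisely enough to extract a $K_4$ or $K_{2,3}$ subgraph, and the interaction between edge subdivisions and the forbidden induced cycles of length $\geq 5$ needs careful bookkeeping (the key point being that no triangle of a $K_4$-subdivision may have two of its edges subdivided). If this structural classification turns out to be too delicate, a workable fallback is to bypass block-multiplicativity and instead prove directly that $\cnat[\pi] \geq 5$ implies $\cnat[\pi] \geq 6$: locate an induced $3$- or $4$-cycle of $G_{\pi}$, observe that since $\minrec[G_{\pi}] > 3$ the graph is not a decorated cycle of length $\leq 4$, and then use Lemmas~\ref{lem:add_edge_minrec} and \ref{lem:dupl_edge_minrec} together with the exclusion of induced $C_{\geq 5}$ to show $G_{\pi}$ must contain, after pruning, one of finitely many small graphs — $K_4$, $K_{2,3}$, the house, the butterfly with an added edge — each of which has $\minrec \geq 6$. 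The theta-graph sub-case of $\mu(B) = 2$ is routine but still requires the four explicit $\minrec$ computations, which can be done by hand from Dhar's criterion or from Proposition~\ref{pro:minrec_acor}.
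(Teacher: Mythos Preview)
Your overall strategy coincides with the paper's: transport to the sandpile side, prune, and use primality of $5$ together with block-multiplicativity of $\minrec$ (your direct argument via $\AO{v}{H}$ is correct; the paper quotes this from~\cite{DS}) to reduce to a single $2$-connected block $B$ with $\minrec[B] = 5$. Your $\mu(B) \leq 2$ cases are also fine, and your theta-graph enumeration under the no-induced-$C_{\geq 5}$ constraint is correct.

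The genuine gap is the $\mu(B) \geq 3$ case. Your argument runs: $B$ contains a $K_4$-subdivision $S$; the no-induced-$C_{\geq 5}$ condition restricts which edges of $K_4$ can be subdivided in $S$; hence $S$ is $K_4$ itself or contains a $K_{2,3}$; hence $\pi$ contains the pattern $4321$ or $34512$. Two steps fail. First, the induced-cycle restriction is a property of $B$, not of the subgraph $S$: a long cycle produced inside $S$ by subdividing two adjacent edges of a $K_4$-triangle may well acquire chords from $B \setminus S$, so it need not be induced in $B$, and nothing prevents arbitrarily long subdivisions. Second, even granting that $K_{2,3}$ sits inside $B$ as a subgraph, Lemma~\ref{lem:cnat_pattern} needs it as an \emph{induced} subgraph of $G_{\pi}$, and extra edges of $B$ among those five vertices are not ruled out. (The $K_4$ half survives, since a $K_4$-subgraph is automatically induced.) Your series-parallel sub-case has the same induced-versus-subgraph problem.

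The paper sidesteps all of this by not invoking permutation-graph structure inside the $2$-connected analysis. Its Lemma~\ref{lem:5minrec} shows, for an \emph{arbitrary} $2$-connected graph $G$, that $\minrec[G] = 5$ forces $G \cong C_6$: take the longest cycle $C_k$, argue that if $G \neq C_k$ there is a chord, check four small base cases ($C_4$ with one or two length-$1$ chords, $C_4$ with a length-$2$ chord, $C_5$ with a length-$1$ chord) giving $\minrec \in \{4,7,6,6\}$, and observe that every other $2$-connected non-cycle graph is reached from one of the last three by edge additions or duplications, hence has $\minrec \geq 6$. Only afterwards does Proposition~\ref{pro:perm_patterns_cycles}(3) enter, to rule out $C_6$. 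This is shorter and dodges the induced/subgraph bookkeeping that trips up your $\mu \geq 3$ case; your fallback sketch is in fact quite close to this line.
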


For this, we use Theorem~\ref{thm:cnat_tutte}. A permutation $\pi$ has exactly $5$ CNATs if, and only if, its permutation graph $G_{\pi}$ has acyclic orientation number equal to $5$. It turns out (see Proposition~\ref{pro:5_tutte}) that essentially only one graph satisfies this enumeration (and that it is not a permutation graph). We begin with a few technical lemmas. We say that a graph $G$ is \emph{two-connected} if for any vertex $v \in G$, the graph $G \setminus \{v\}$, obtained by removing $v$ and any incident edges from $G$, is connected. 

\begin{lemma}\label{lem:t=5_2-conn}
Let $G$ be a graph. If $a_G = 5$, then the $2$-core $\Prune[G]$ is two-connected.
\end{lemma}

\begin{proof}
Let $G' := \Prune[G]$. Suppose that $G'$ has a cut-vertex $v$ (i.e.\ removing $v$ disconnects the graph), and let $G'_1, G'_2, \ldots, G'_k$ be the components of $G'$ joined at $v$ (for some $k \geq 2$). By~\cite[Property~(iv)]{Tutte}, we have $a_{G'} = \prod\limits_{1 \leq i \leq k} a_{G'_i}$. But since $a_{G'} = a_G = 5$ is prime, this implies that there must exist $i$ such that $a_{G_i} = 1$ (in fact all but one of the components must satisfy this). Lemma~\ref{lem:tutte_tree} then implies that $G'_i$ must be a tree, which contradicts the fact that $G'$ is the $2$-core of a graph.
\end{proof}

\begin{lemma}\label{lem:2-conn_cycle}
If $G$ is two-connected with at least $3$ vertices, then every vertex $v$ of $G$ belongs to a cycle.
\end{lemma}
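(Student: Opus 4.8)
The plan is to first extract from two-connectedness the structural fact that every vertex of $G$ has degree at least $2$, and then to turn the connectedness of $G\setminus\{v\}$ into an explicit cycle through $v$ by joining two edges at $v$ via a path avoiding $v$.

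First I would rule out vertices of degree $\leq 1$. Degree $0$ is impossible since $G$ is connected with at least two vertices. Suppose some vertex $v$ had degree exactly $1$, with unique neighbour $w$. Since $|V(G)| \geq 3$, there is at least one vertex other than $v$ and $w$; but in $G\setminus\{w\}$ the vertex $v$ has no neighbour at all, so $G\setminus\{w\}$ is disconnected, contradicting the hypothesis. Hence $\deg(v)\geq 2$ for every $v\in V(G)$. This is the only place the assumption $|V(G)|\geq 3$ is genuinely used, and it is where I expect the (very mild) care to be needed — without it, the single edge $K_2$ is a counterexample.

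Next, fix an arbitrary vertex $v$ and pick two distinct neighbours $u,w$ of $v$, which exist by the degree bound just established. Since $G$ is two-connected, $G\setminus\{v\}$ is connected, and both $u$ and $w$ are vertices of it, so there is a path $P$ from $u$ to $w$ in $G\setminus\{v\}$; take $P$ to be simple (for instance a shortest such path). Write $P=(u=p_0,p_1,\dots,p_\ell=w)$, so that the $p_i$ are pairwise distinct, none equals $v$, and $\ell\geq 1$ because $u\neq w$. Then the sequence $v,p_0,p_1,\dots,p_\ell,v$ satisfies the definition of a cycle of length $\ell+2\geq 3$ given in Section~\ref{subsec:graphs_prelims}: consecutive vertices are adjacent — the edges $(v,u)$ and $(w,v)$ at the two ends and the edges of $P$ in between — and the vertices $v,p_0,\dots,p_\ell$ are all distinct. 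Thus $v$ lies on a cycle of $G$.

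Since $v$ was arbitrary, this proves the lemma. The argument is entirely elementary; the only subtlety is the degree-$1$ exclusion in the first step, and there is no real obstacle beyond writing it cleanly.
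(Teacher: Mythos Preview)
Your proof is correct and follows essentially the same approach as the paper: rule out degree-one vertices by observing that removing the unique neighbour would disconnect the graph, then take two neighbours of $v$ and connect them by a path in $G\setminus\{v\}$ to form the cycle. Your version simply spells out a few more details (the degree-zero case, the simplicity of the path, and the verification of the cycle definition).
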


\begin{proof}
A two-connected graph with at least $3$ vertices cannot contain a vertex of degree one, since removing the neighbour of such a vertex would disconnect the graph. Therefore any vertex $v$ in a two-connected graph $G$ must have two distinct neighbours $w_1$ and $w_2$. Now by definition the graph $G \setminus \{v\}$ is connected, so must contain a path from $w_1$ to $w_2$. Connecting this path to the edges $(v, w_1)$ and $(w_2, v)$ in $G$ gives the desired cycle.
\end{proof}

\begin{lemma}\label{lem:4-cycle}
Let $G$ be a two-connected graph with $\vert V(G) \vert \geq 5$ such that the longest cycle in $G$ has length equal to $4$. Let $H$ be the graph consisting of two $4$-cycles glued along two edges (see Figure~\ref{fig:two_4-cyc}). Then $H$ is a subgraph of $G$.
\end{lemma}

\begin{figure}
  \centering
  \begin{tikzpicture}
    \node [draw, circle] (1) at (0,0) {};
    \node [draw, circle] (2) at (1,1) {};
    \node [draw, circle] (3) at (0,2) {};
    \node [draw, circle] (4) at (-1,1) {};
    \node [draw, circle] (5) at (0,1) {};
    \draw [thick] (1)--(2)--(3)--(4)--(1)--(5)--(3);
  \end{tikzpicture}
  \caption{The graph $H$ consisting of two $4$-cycles which share two consecutive edges.\label{fig:two_4-cyc}}
\end{figure}
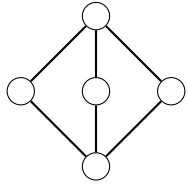

\begin{proof}
Let $C$ be a $4$-cycle in $G$. Consider a vertex $v \notin V(C)$. Since $G$ is two-connected, there must exist two paths from $v$ to distinct vertices of $C$ which do not intersect (except at $v$). Let us denote $P_1$ and $P_2$ these two paths, ending respectively at vertices $v_1$ and $v_2$ of $C$ (with $v_1 \neq v_2$). If $v_1$ and $v_2$ are adjacent vertices in $C$, then $C \setminus \{ (v_1, v_{2}) \} \cup P_1 \cup P_2$ would form a cycle of length at least $5$, a contradiction. Therefore $v_{1}$ and $v_{2}$ must be diagonally opposite vertices in $C$. Taking the union of a half-cycle from $v_{1}$ to $v_{2}$ together with $P_1$ and $P_2$ then gives a cycle of length $2 + \vert E(P_1) \vert + \vert E(P_2) \vert$. Since all cycles in $G$ have length at most $4$, this implies that $P_1$ and $P_2$ each consist of a single edge, and the union of these with $C$ gives the desired subgraph $H$.
\end{proof}

We are now ready to make precise our statement that there is essentially only one graph with acyclic orientation number $5$.

\begin{proposition}\label{pro:5_tutte}
Let $G$ be a simple graph. Then $a_G = 5$ if, and only if, $G$ is a decorated $6$-cycle.
\end{proposition}

\begin{proof}
If $G$ is a decorated $6$-cycle, then we have $a_G = a_{\Prune[G]} = 6-1 = 5$ by Lemma~\ref{lem:tutte_dec_cycle}. Suppose now that $G$ is a simple graph such that $a_G = 5$, and define $G' := \Prune[G]$ to be the $2$-core of $G$. We wish to show that $G'$ is (isomorphic to) the $6$-cycle $C_6$. Let $k$ be the length of the longest cycle in $G'$ (such a cycle must exist since otherwise we would have $a_G = a_{G'} = 1$ by Lemma~\ref{lem:tutte_tree}). That is, $k$ is the maximal value such that $G'$ contains a $k$-cycle $v_0, v_1, \ldots, v_{k-1}, v_k=v_0$ where the first $k-1$ vertices are distinct. With a slight abuse of notation, we denote this $k$-cycle $C_k$. 

We first note that we must have $k \leq 6$. Indeed, otherwise we would have $a_G = a_{G'} \geq a_{C_k} = k-1 \geq 7-1 = 6$, a contradiction (using Lemmas~\ref{lem:tutte_dec_cycle} and \ref{lem:strict_subgraph}). Moreover, if $k=6$, then $G'$ must be the $6$-cycle. Otherwise, $G$ would contain a $6$-cycle to which its $2$-core is not isomorphic, which implies $a_G > a_{C_6} = 5$ by Lemma~\ref{lem:strict_subgraph}.

It therefore remains to show that we cannot have $k \leq 5$. First, note that $G'$ cannot be the $k$-cycle $C_k$ in this case, since otherwise we would have $a_G = a_{G'} = a_{C_k} = k-1 \leq 4$, a contradiction. A \emph{chord} of $C_k$ in $G'$ is a path in $G'$ between two distinct vertices $v_i, v_j$ of $C_k$ which only intersects $C_k$ at the two end-points $v_i$ and $v_j$. We claim that $C_k$ must contain a chord in $G'$. Indeed, since $G'$ is not $C_k$, then there must be an edge $(v, w)$ for some vertex $v \in C_k$ such that $(v,w)$ is not an edge of $C_k$ (i.e.\ if $v = v_i$, then $w \notin \{v_{i-1}, v_{i+1} \}$). If $w \in C_k$, then the edge $(v,w)$ is a chord by definition, so suppose that $w \notin C_k$. Since $G'$ is two-connected by Lemma~\ref{lem:t=5_2-conn}, removing $v$ does not disconnect the graph, so there must be a path from $w$ to one of the neighbours $v'$ of $v$ on the cycle $C_k$ which does not use the edge $(v,v')$. Taking the edge $(v,w)$ together with this path from $w$ to the first point at which it intersects the cycle $C_k$ gives the desired chord. We now proceed on a case-by-case basis on the value of $k$. In each case, the acyclic orientation numbers can be checked e.g.\ through applying Equation~\eqref{eq:Tutte_poly} or through using the SageMath software~\cite{SageMath}.

\begin{description}
\item [Case~$k=3$] Since $G'$ is simple, any chord of $C_3$ must have length at least $2$ (otherwise it would be a multiple edge), let us say that this chord is $v_0, w_1, \ldots, w_k, v_1$, with $k \geq 1$ and $w_i \notin \{v_0, v_1, v_2\}$ for $1 \leq i \leq k$. Then $v_0, w_1, \ldots, w_k, v_1, v_2, v_0$ is a cycle of length at least $4$ in $G'$, a contradiction.
\item [Case~$k=4$] Here we distinguish two possibilities.
  \begin{itemize}
  \item If $\vert V(G') \vert \geq 5$, we apply Lemma~\ref{lem:4-cycle}. One can check that the graph $H$ of Figure~\ref{fig:two_4-cyc} satisfies $t_H = 7$, and by Lemma~\ref{lem:strict_subgraph} we then have $a_{G'} \geq t_H$, which contradicts $a_{G'} = 5$.
  \item If $\vert V(G') \vert = 4$, then $G'$ must be $C_4$ with either one or two chords of length $1$, as in Figure~\ref{fig:cases_4_vertices}. One can check that these have acyclic orientation numbers $4$ and $6$ respectively, neither of which equals $5$.
  \end{itemize}
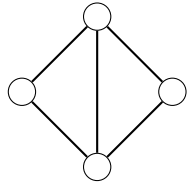
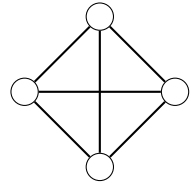
\begin{figure}[ht]
  \centering  
  \begin{subfigure}[b]{0.32\textwidth}
    \centering
    \begin{tikzpicture}
    \node [draw, circle] (1) at (0,0) {};
    \node [draw, circle] (2) at (1,1) {};
    \node [draw, circle] (3) at (0,2) {};
    \node [draw, circle] (4) at (-1,1) {};
    \draw [thick] (1)--(2)--(3)--(4)--(1)--(3);
    \end{tikzpicture}
    \caption{$G'$ is $C_4$ with a chord of length $1$.\label{fig:k=4_chord11}}
  \end{subfigure}
  \hspace{1cm}
  \begin{subfigure}[b]{0.32\textwidth}
    \centering
    \begin{tikzpicture}
    \node [draw, circle] (1) at (0,0) {};
    \node [draw, circle] (2) at (1,1) {};
    \node [draw, circle] (3) at (0,2) {};
    \node [draw, circle] (4) at (-1,1) {};
    \draw [thick] (1)--(2)--(3)--(4)--(1)--(3);
    \draw [thick] (2)--(4);
    \end{tikzpicture}
    \caption{$G'$ is $C_4$ with two chords of length $1$.\label{fig:k=4_chord21}}
  \end{subfigure}
  \caption{The two cases for two-connected graphs on $4$ vertices which are not $C_4$.\label{fig:cases_4_vertices}}  
\end{figure}
\item [Case~$k=5$] Recall that $C_5$ must contain a chord. This chord should have length $1$ or $2$ (otherwise we would get a cycle of length at least $6$), meaning that $G$ contains (as a subgraph) one of the two cases of Figure~\ref{fig:cases_5_vertices}. One can check that the graph $H_1$ satisfies $a_{H_1} = 6$, while the graph $H_2$ yields $a_{H_2} = 10$.\footnote{Contracting a chord edge gives $H_1$, while deleting it gives a decorated $5$-cycle, so $a_{H_2} = a_{H_1} + a_{C_5} = 6 + 4 = 10$.} In particular, Lemma~\ref{lem:strict_subgraph} implies that $a_{G'} \geq \min \{ a_{H_1}, a_{H_2} \} = 6$, contradicting $a_{G'} = 5$. This completes the proof of the proposition.
\begin{figure}[ht]
  \centering  
  \begin{subfigure}[b]{0.32\textwidth}
    \centering
    \begin{tikzpicture}
    \node [draw, circle] (1) at (0,0) {};
    \node [draw, circle] (2) at (1,1) {};
    \node [draw, circle] (3) at (0,2) {};
    \node [draw, circle] (4) at (-1.5,2) {};
    \node [draw, circle] (5) at (-1.5,0) {};
    \draw [thick] (1)--(2)--(3)--(4)--(5)--(1)--(3);
    \end{tikzpicture}
    \caption{$H_1$ is $C_5$ with one chord of length $1$.\label{fig:k=5_chord11}}
  \end{subfigure}
  \hspace{1cm}
  \begin{subfigure}[b]{0.32\textwidth}
    \centering
    \begin{tikzpicture}
    \node [draw, circle] (1) at (0,0) {};
    \node [draw, circle] (6) at (0,1) {};
    \node [draw, circle] (3) at (0,2) {};
    \node [draw, circle] (2) at (1,1) {};
    \node [draw, circle] (4) at (-1.5,2) {};
    \node [draw, circle] (5) at (-1.5,0) {};
    \draw [thick] (1)--(2)--(3)--(4)--(5)--(1)--(6)--(3);
    \end{tikzpicture}
    \caption{$H_2$ is $C_5$ with one chord of length $2$.\label{fig:k=5_chord12}}
  \end{subfigure}
  \caption{The two ``base cases'' for $k=5$.\label{fig:cases_5_vertices}}  
\end{figure}
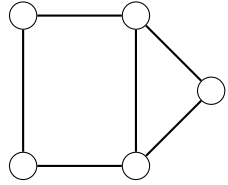
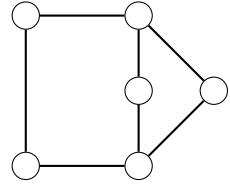
\end{description}
\end{proof}

We are now equipped to prove Theorem~\ref{thm:b(n,5)=0}.

\begin{proof}[Proof of Theorem~\ref{thm:b(n,5)=0}]
Suppose that $G$ is such that $a_G = 5$. By Proposition~\ref{pro:5_tutte}, its $2$-core must be isomorphic to the $6$-cycle. In particular, the original graph $G$ induces a $6$-cycle (the $2$-core of $G$ is always an induced subgraph of $G$). But from Point~(3) of Proposition~\ref{pro:perm_patterns_cycles} this is impossible if $G$ were a permutation graph. As such, we have shown that a graph $G$ satisfying $a_G = 5$ cannot be a permutation graph, which implies that there are no permutations with $5$ CNATs by Theorem~\ref{thm:cnat_tutte}, as desired.
\end{proof}

\begin{problem}\label{pb:proof_b(n,5)}
Our proof of Theorem~\ref{thm:b(n,5)=0} relies heavily on the structure of permutation graphs, and the fact that $\cnat[\pi] =a_{G_{\pi}}$ (Theorem~\ref{thm:cnat_tutte}), which is non-trivial. It would be interesting to find a more direct combinatorial proof of this result.
\end{problem}

\subsection{Permutations with maximal numbers of CNATs}\label{subsec:max_b(n,k)}

Our final result in this section looks at the maximum value of $k$ such that $b(n,k) > 0$. We answer in the affirmative Conjecture~6.3 in~\cite{CO}.

\begin{theorem}\label{thm:max_b(n,k)}
For any $n \geq 1$, we have $\max\{ \cnat[\pi]; \pi \in S_n \} = (n-1)!$, and this maximum is achieved only for the decreasing permutation $\dec[n] = n(n-1)\cdots 1$. In other words, we have $B(n, (n-1)!) = \{ \dec[n] \}$ and $B(n, k) = \emptyset$ for all $k > (n-1)!$.
\end{theorem}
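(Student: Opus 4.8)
The plan is to push everything through the identity $\cnat[\pi] = \minrec[G_{\pi}]$ established in Theorem~\ref{thm:cnat_minrec}, and then simply compare the permutation graph $G_{\pi}$ with the complete graph $K_n$. The two anchoring facts I would record first are: (i) $G_{\dec[n]} = K_n$, since in the decreasing permutation every pair of letters is an inversion; and (ii) $\minrec[K_n] = (n-1)!$, which is classical — the minimal recurrent configurations of $K_n$ are exactly the permutations of $\{0, 1, \dots, n-2\}$ (see the references cited at the start of Section~\ref{sec:upper-diag}). Combining these with Theorem~\ref{thm:cnat_minrec} gives $\cnat[{\dec[n]}] = (n-1)!$, so the claimed maximum is at least $(n-1)!$ and is attained at $\dec[n]$.

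Next I would prove the matching upper bound for an arbitrary (irreducible) $n$-permutation $\pi$. Since $\pi$ is irreducible, $G_{\pi}$ is connected by Proposition~\ref{pro:perm_graph_connected}, and it is a simple graph on the vertex set $[n]$, hence a subgraph of $K_n$ on the same vertex set. Therefore $K_n$ can be obtained from $G_{\pi}$ by adding the missing edges one at a time; each intermediate graph stays connected (and finite, simple), so Lemma~\ref{lem:add_edge_minrec} applies at every step and each addition \emph{strictly} increases the number of minimal recurrent configurations. Consequently $\minrec[G_{\pi}] \le \minrec[K_n] = (n-1)!$, and by Theorem~\ref{thm:cnat_minrec} we get $\cnat[\pi] \le (n-1)!$ for every $\pi \in S_n$. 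In particular $B(n,k) = \emptyset$ for all $k > (n-1)!$.

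Finally I would pin down the equality case. If $\cnat[\pi] = (n-1)!$, then $\minrec[G_{\pi}] = \minrec[K_n]$, so the edge-addition chain above must be empty (any single addition would strictly increase $\minrec$); that is, $G_{\pi} = K_n$. But $G_{\pi} = K_n$ forces every pair $(i,j)$ to be an inversion of $\pi$, which happens only for $\pi = n(n-1)\cdots 1 = \dec[n]$. Hence $B(n, (n-1)!) = \{\dec[n]\}$, which together with the previous paragraph completes the proof. (The small cases $n = 1, 2$ are consistent: there $G_{\dec[n]}$ is a single vertex or a single edge, a tree with $\minrec = 1 = (n-1)!$, by Lemma~\ref{lem:minrec_tree}.) I do not anticipate a genuine obstacle here: the only points needing care are confirming that the intermediate graphs meet the standing hypotheses (automatic) and invoking the cited evaluation $\minrec[K_n] = (n-1)!$ rather than reproving it — the potential subtlety of disconnection that plagued earlier deletion arguments (cf.\ Remark~\ref{rem:B(n,2)_quad_cond}) does not arise, since here we only ever \emph{add} edges.
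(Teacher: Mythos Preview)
Your proof is correct and follows essentially the same route as the paper: translate via Theorem~\ref{thm:cnat_minrec} to $\minrec[G_{\pi}]$, observe $G_{\dec[n]}=K_n$, and use Lemma~\ref{lem:add_edge_minrec} to add missing edges one by one, obtaining strict inequalities that force both the upper bound and the uniqueness of the maximiser. The only cosmetic difference is that the paper appeals to Theorem~\ref{thm:bij_cnat_perm} for $\cnat[{\dec[n]}]=(n-1)!$, whereas you invoke the classical evaluation $\minrec[K_n]=(n-1)!$ (also noted at the start of Section~\ref{sec:upper-diag}); either is fine.
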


\begin{proof}
That $\cnat[{\dec[n]}] = (n-1)!$ is a consequence of Theorem~\ref{thm:bij_cnat_perm}. We therefore need to show that if $\pi \neq \dec[n]$, we have $\cnat[\pi] < \cnat[{\dec[n]}]$. For this, note that the permutation graph $G_{\dec[n]}$ associated with the decreasing permutation is the complete graph $K_n$ on $n$ vertices, and that $\dec[n]$ is the only permutation whose graph is the complete graph. 
Now for $\pi \neq \dec[n]$, the permutation graph $G_{\pi}$ is a graph on $n$ vertices which is not complete. The complete graph $K_n$ can therefore be obtained from $G_{\pi}$ through a non-empty sequence of edge additions. By Lemma~\ref{lem:add_edge_tutte}, this implies that $a_{G_{\pi}} < a_{K_n} = a_{G_{\dec[n]}}$. The result then follows from Theorem~\ref{thm:cnat_tutte}.
\end{proof}

\begin{remark}\label{rem:b(n,k!)}
Theorem~\ref{thm:max_b(n,k)} implies that $b(n,(n-1)!) = 1$. We have also shown (Corollary~\ref{cor:b(n,2)}) that $b(n,2) = b(n,2!) = (n-2) \cdot 2^{n-3}$, and from~\cite[Corollary~3.12]{CO} we know that $b(n,1) = b(n, 1!) = 2^{n-2}$. Finally, it was also conjectured in~\cite{CO} that $b(n, 6) = b(n, 3!) = \frac{(n-2)(n-3)}{2} \cdot 2^{n-4}$. This suggests that in general, we might have $b(n, k!) = \binom{n-2}{k-1} \cdot 2^{n-1-k} $ for all $n \geq 2$ and $1 \leq k < n$ (Sequence~A038207 in~\cite{OEIS}). However, this formula in fact breaks down almost immediately after these initial values, since our simulations showed that $b(6, 24) = 31$, which is actually a long way off the value of $\binom{4}{3} \cdot 2^1 = 8$ given by the above sequence. In fact, the sequence $\big( b(n, 24) \big)_{n \geq 5}$ does not appear at all in the OEIS, even given just its first three terms $1, 31, 176$. Table~\ref{table:b(n,k!)} below gives the first few rows of the triangular sequence $\big( b(n, k!) \big)_{n \geq 2,\, 1 \leq k < n}$.

\begin{table}[ht]
\centering
\begin{tabular}{|c|c|c|c|c|c|c|}
\hline 
\diagbox{$n$}{$k$}& 1 & 2 & 3 & 4 & 5 & 6 \\
\hline
2 & 1 & & & & & \\ 
3 & 2 & 1 & & & & \\
4 & 4 & 4 & 1 & & & \\
5 & 8 & 12 & 6 & 1 & & \\
6 & 16 & 32 & 24 & 31 & 1 & \\
7 & 32 & 80 & 80 & 176 & 56 & 1 \\
\hline
\end{tabular}
\caption{The values of the sequence $\big( b(n, k!) \big)_{2 \leq n \leq 7,\, 1 \leq k < n}$.\label{table:b(n,k!)}}
\end{table}
\end{remark}


\section{Conclusion and future perspectives}\label{sec:conc}
In this paper, we deepened the combinatorial studies of CNATs initiated in the literature (see e.g.\ ~\cite{ABBS, CO, AvalDet}) by exploiting a connection between CNATs with a given permutation and the acyclic orientation number of the corresponding permutation graph that appeared (in equivalent form) in~\cite{DSSS2}. In Theorem~\ref{thm:bij_cnat_perm}, we provided a new bijection between permutations and so-called upper-diagonal CNATs (CNATs whose associated permutation is the decreasing permutation $n(n-1)\cdots1$), which has the added benefit of preserving certain statistics of these objects. This bijection is defined recursively, based on two operations on labelled CNATs called top-row decomposition and top-row deletion.

We then investigated the enumeration of permutations with a given number of CNATs, answering a number of conjectures from~\cite{CO}. We gave separate characterisations of permutations with exactly $1$, $2$ or $3$ CNATs in terms of so-called \emph{quadrants} of the permutation's plot, and in terms of permutation patterns. This allowed us to establish bijections between marked permutations with a single CNAT and permutations with $2$ CNATs (Theorem~\ref{thm:Insert_B(n,1)toB(n+1,2)}), and between permutations with $2$ CNATs and permutations with $3$ CNATs (Theorem~\ref{thm:biject_B(n,2)toB(n+1,3)}). From this we obtained enumerative formulas for permutations with $2$ or $3$ CNATs (Corollaries~\ref{cor:b(n,2)} and \ref{cor:b(n,3)}). Finally, we showed that there are no permutations of any length with exactly $5$ CNATs (Theorem~\ref{thm:b(n,5)=0}), and that the maximal number of CNATs associated with a given permutation of length $n$ is $(n-1)!$, achieved uniquely for the decreasing permutation $n(n-1)\cdots1$ (Theorem~\ref{thm:max_b(n,k)}).

We end this paper with some possible directions for future research.
\begin{itemize}
\item Find a more ``direct'' combinatorial proof of the fact that there are no permutations with exactly $5$ CNATs, as explained in Problem~\ref{pb:proof_b(n,5)}.
\item Investigate the enumerative sequences $\big( b(n,k) \big)_{n \geq 1}$ for more values of $k$. In this paper, we have looked at the cases $k=1, 2, 3, 5, (n-1)!$. The paper~\cite{CO} suggests that the entries for $k=4, 6, 7$ do indeed appear in the OEIS~\cite{OEIS}. As explained in Remark~\ref{rem:b(n,k!)}, this is not the case for general factorial values of $k$, so it may be that these specific values essentially rely on there being relatively few permutation graphs with the specified acyclic orientation number. For instance, it seems plausible that the only graphs with acyclic orientation number equal to $4$ are the two graphs made up of two triangles in Figures~\ref{fig:ext_bowtie} and \ref{fig:k=4_chord11} with some trees attached. It also would be particularly interesting to know if there are values of $k$ other than $5$ for which $b(n,k)$ is always $0$.
\item One observation from~\cite[Table~1]{CO} which lists the first few values of $\big( b(n,k) \big)$ is that there are very few values for which $b(n,k)$ is odd, other than the $1$'s at the right-hand end of each row implied by Theorem~\ref{thm:max_b(n,k)}. This is also true in our Table~\ref{table:b(n,k!)} above: $b(6, 24) = 31$ is the only other odd value here. Can this observation be quantified in some way?
\item A final direction of future research could be to restrict ourselves to certain subsets of permutations, for example \emph{derangements} (permutations with no fixed point). Fixed points seem to play an important role in the enumeration of permutations according to their number of associated CNATs, since a fixed point implies the existence of a $321$ pattern, which in turn means a $3$-cycle in the permutation graph (and cycles play an important role in graph orientations). For instance, we know that there are no derangements with $2$ CNATs (Proposition~\ref{pro:B(n,2)_characterisation}), while permutations with $1$ and $3$ CNATs are all derangements (Propositions~\ref{pro:B(n,1)_quadrants} and \ref{pro:characterisation_B(n,3)_quadrants}). It is intriguing to consider other values of $k$ and check whether there are derangements or not which have $k$ CNATs. Besides derangements, we could also restrict ourselves to permutations containing or avoiding certain patterns.
\end{itemize}

\section*{Acknowledgments}

The first author would like to thank Einar Steingr\`imsson for helpful initial discussions that led to the results of Section~\ref{sec:upper-diag}. The research leading to these results received funding from the National Natural Science Foundation of China (NSFC) under Grant Agreement No 12101505, by the Research Development Fund of Xi'an Jiaotong-Liverpool University, grant number RDF-22-01-089, and by the Postgraduate Research Scholarship of Xi'an Jiaotong-Liverpool University, grant number PGRS2012026.


\bibliographystyle{plain}
\bibliography{cnat_bibliography}

\end{document}